\documentclass[12pt]{amsart}
\usepackage{tikz-cd}
\usepackage{amssymb}
\usepackage{amsmath,amscd}
\usepackage{mathrsfs}
\usepackage{url}
\usepackage{cite}
\usepackage{fullpage}
\usepackage[hidelinks]{hyperref}
\usepackage{verbatim}
\usepackage{comment}
\usepackage{fancyvrb}
\usepackage{fvextra}
\usepackage{caption}
\usepackage{tikz}
\usepackage{tkz-graph}
\usetikzlibrary{matrix}
 \usetikzlibrary{shapes}
\usetikzlibrary{arrows,decorations.markings}
\usepackage{tikz-cd}
\usepackage{stmaryrd}

\pgfarrowsdeclare{bad to}{bad to}
{
  \pgfarrowsleftextend{-2\pgflinewidth}
  \pgfarrowsrightextend{\pgflinewidth}
}
{
  \pgfsetlinewidth{0.8\pgflinewidth}
  \pgfsetdash{}{0pt}
  \pgfsetroundcap
  \pgfsetroundjoin
  \pgfpathmoveto{\pgfpoint{-3\pgflinewidth}{4\pgflinewidth}}
  \pgfpathcurveto
  {\pgfpoint{-2.75\pgflinewidth}{2.5\pgflinewidth}}
  {\pgfpoint{0pt}{0.25\pgflinewidth}}
  {\pgfpoint{0.75\pgflinewidth}{0pt}}
  \pgfpathcurveto
  {\pgfpoint{0pt}{-0.25\pgflinewidth}}
  {\pgfpoint{-2.75\pgflinewidth}{-2.5\pgflinewidth}}
  {\pgfpoint{-3\pgflinewidth}{-4\pgflinewidth}}
  \pgfusepathqstroke
}

\newtheorem{thm}{Theorem}[section]
\newtheorem{prop}[thm]{Proposition}
\newtheorem{lem}[thm]{Lemma}
\newtheorem{cor}[thm]{Corollary}

\theoremstyle{definition}

\newtheorem{rem}[thm]{Remark}

\numberwithin{equation}{section}
\newcommand{\semis}{\mathrm{ss}}
\newcommand{\s}{\mathsf{S}}
\renewcommand{\l}{\mathsf{L}}

\newcommand{\V}{\mathbb{V}}

\newcommand{\zz}{\mathbb{Z}}
\newcommand{\cc}{\mathbb{C}}
\newcommand{\qq}{\mathbb{Q}}

\renewcommand{\ss}{\mathbb{S}}

\newcommand{\M}{\mathcal{M}}

\renewcommand{\tilde}{\widetilde}

\DeclareMathOperator{\Aut}{Aut}

\DeclareMathOperator{\SL}{SL}

\DeclareMathOperator{\Sym}{Sym}

\DeclareMathOperator{\Sp}{Sp}

\DeclareMathOperator{\Gal}{Gal}

\newcommand{\lstw}{\mathsf{L}\mathsf{S}_{12}}

\usepackage{wrapfig}

\renewcommand{\bar}{\overline}
\newcommand{\Mb}{\overline{\M}}



\usepackage{ytableau,tikz,varwidth, tikz-cd}
\usepackage{hyperref}

\usetikzlibrary{calc, decorations, positioning}
\usepackage[enableskew]{youngtab}
\usepackage{enumerate}
\usepackage{amssymb, mathscinet}

\usepackage[all,cmtip]{xy}

\usepackage{multicol}
\usepackage{enumitem}

\usetikzlibrary{matrix}
\usetikzlibrary{shapes}
\usetikzlibrary{arrows, automata}
\usetikzlibrary{decorations,decorations.pathmorphing,decorations.pathreplacing,decorations.markings}
\usetikzlibrary{through}
\usetikzlibrary{decorations.pathreplacing,calligraphy}

\tikzset{every picture/.style={baseline=-.65ex} }
\tikzset{ext/.style={circle, draw,inner sep=1pt},int/.style={circle,draw,fill,inner sep=1pt},nil/.style={inner sep=1pt}}

\tikzset{every loop/.style={draw}}
\tikzset{
  crossed/.style={
    decoration={markings,mark=at position .5 with {\arrow{|}}},
    postaction={decorate},
    shorten >=0.4pt}}

  \tikzset{->-/.style={decoration={
    markings,
    mark=at position .5 with {\arrow{>}}},postaction={decorate}}}

\newcommand{\ZZ}{\mathbb{Z}}
\newcommand{\CC}{\mathbb{C}}

\newcommand{\Q}{\mathbb{Q}}

\newcommand{\cM}{\mathcal{M}}

\newcommand{\sgn}{\operatorname{sgn}}

\newcommand{\MM}{\overline{\mathcal{M}}}
\newcommand{\RR}{\overline{\mathcal{R}}}

\newcommand{\bbS}{\mathbb{S}}

\newcommand{\bGK}{\overline{\GK}{}}
\newcommand{\hide}[1]{}



\newtheorem{Fact}[thm]{Fact}

\newtheorem{Theorem}[thm]{Theorem}

\newtheorem{Lemma}[thm]{Lemma}

\newtheorem{Remark}[thm]{Remark}

\newtheorem{Proposition}[thm]{Proposition}

\newtheorem{Corollary}[thm]{Corollary}

\newtheorem{Conjecture}[thm]{Conjecture}

  \theoremstyle{definition} 

\theoremstyle{remark}

\newcommand{\gr}{\mathrm{gr}}
\newcommand{\GC}{\mathsf{GC}}

\newcommand{\GK}{\mathsf{GK}}

\newcommand{\omis}{ \begin{tikzpicture}
    \node (v) at (0,0) {$\omega$};
    \node (w) at (1,0) {$1$};
    \draw (v) edge (w);
  \end{tikzpicture}
  \cdots 
  \begin{tikzpicture}
    \node (v) at (0,0) {$\omega$};
    \node (w) at (1,0) {$n$};
    \draw (v) edge (w);
  \end{tikzpicture}}
\newcommand{\tripod}{
\begin{tikzpicture}
    \node[int] (i) at (0,.5) {};
    \node (v1) at (-.5,-.2) {$\omega$};
    \node (v2) at (0,-.2) {$\omega$};
    \node (v3) at (.5,-.2) {$\omega$};
  \draw (i) edge (v1) edge (v2) edge (v3);
  \end{tikzpicture}
}
\newcommand{\tripods}{
\tripod 
\cdots 
\tripod
}


\usepackage{color, colortbl}
\usepackage{array, adjustbox}

\definecolor{Gray}{gray}{0.9}

\newcolumntype{g}{>{\columncolor{Gray}}c}
\newcolumntype{M}{V{6cm}} 
\newcolumntype{N}{V{12cm}} 

\author{Samir Canning}
\author{Hannah Larson}
\author{Sam Payne}
\author{Thomas Willwacher}
\thanks{
S.C. was supported by a Hermann-Weyl-Instructorship from the Forschungsinstitut für Mathematik at ETH Z\"urich.
This research was partially conducted during the period H.L served as a Clay Research Fellow. 
S.P. was supported in part by NSF grants DMS--2053261 and DMS--2302475, and he conducted parts of this research during a visit to the Institute for Advanced Study supported by the Charles Simonyi Endowment. T.W. has been supported by the NCCR SwissMAP, funded by the Swiss National Science Foundation
}

\title{The motivic structures $\mathsf{LS}_{12}$ and $\mathsf{S}_{16}$ in the cohomology of moduli spaces of curves}

\begin{document}

\begin{abstract}
    We study the appearances of  $\mathsf{LS}_{12}$ and $\mathsf{S}_{16}$ in the weight-graded compactly supported cohomology of moduli spaces of curves. As applications, we prove new nonvanishing results for the middle cohomology groups of $\M_9$ and $\M_{11}$ and give evidence to support the conjecture that $\dim_\Q H^{2g + k}_c(\M_g)$ grows at least exponentially with $g$ for almost all $k$. 
\end{abstract}

\maketitle


\section{Introduction}

There are few isomorphism classes of  irreducible Hodge structures and $\ell$-adic Galois representations that appear as subquotients of the compactly supported cohomology groups of moduli spaces of smooth and stable curves in motivic weights less than or equal to 15, and these are completely classified in \cite{BergstromFaberPayne, CLP-STE}, confirming predictions from arithmetic \cite{ChenevierLannes}. It is therefore natural to study these cohomology groups by computing the multiplicities with which each of these irreducible motivic structures appear. 

The strategy of studying the cohomology of moduli spaces one motivic structure at a time has already proved fruitful; appearances of the trivial motivic structure in weight zero refuted a longstanding conjecture about the cohomology of $\M_g$ and established new connections to graph cohomology and Grothendieck-Teichm\"uller theory \cite{CGP21, CGP22}.  Graph complex techniques were then used to study multiplicities of the structures $\l$ and $\s_{12}$ in weights 2 and 11, respectively, providing many nontrivial new lower bounds on dimensions of cohomology groups of $\M_g$ \cite{PayneWillwacher21, PayneWillwacher24}. Further computations of Euler characteristics (alternating sums of multiplicities) of $\s_{12}$ and $\lstw$ in weights 11 and 13 were used to prove that $\M_g$ does not have polynomial point count for $g \geq 9$ \cite{CLPW}.

Here we pursue some natural next steps in this program, focusing on $\lstw$ and $\s_{16}$ in motivic weights 13 and 15, respectively. Throughout, we consider rational cohomology groups of algebraic varieties and Deligne--Mumford stacks over $\Q$ with their associated motivic structures, i.e., a rational mixed Hodge structure and an action of $\Gal(\overline \Q | \Q)$ on the extension of scalars to $\Q_\ell$.  The structures $\l := H^2(\mathbb{P}^1)$, $\s_{k+1} := W_k H^k(\M_{1,k})$, and $\lstw := \l \otimes_\Q \s_{12}$ play a special role. We say that a cohomology group is of \emph{Tate type} if the associated graded of its weight filtration is a direct sum of tensor powers of $\l$. If $X$ is smooth then, by Poincar\'e duality, $H^*(X)$ is of Tate type if and only if $H^*_c(X)$ is of Tate type.

\subsection{Weight 13 cohomology} 

In \cite[Section~4]{CLPW}, we gave a  presentation for $H^{13}(\Mb_{g,n})$ for all $g$ and $n$. Surprisingly, this presentation stabilizes with the genus; it is independent of $g$, for $g \geq 2$.  Here, we use this presentation to produce a graph complex that computes $\gr^W_{13}H^*_c(\M_{g,n})$ and study its cohomology for small $g$ and $n$. 

Recall that $H^*(\M_{g,n})$ is of Tate type for $3g +2n \leq 24$ \cite[Theorem~1.5]{CLPW}. Thus $\gr_k^W H^*(\M_{g,n})$ vanishes when $k$ is odd and $3g + 2n \leq 24$. We improve this vanishing bound by one for $k = 13$.

\begin{prop}\label{prop:wt 13 vanishing}
    If $3g + 2n \leq 25$ then $\gr_{13}^W H^*_c(\M_{g,n}) = 0$.
\end{prop}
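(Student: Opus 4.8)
The plan is to clear the large cases with the Tate-type bound and then examine the weight-$13$ graph complex directly in the few cases that survive.

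First, suppose $3g+2n\le 24$. By \cite[Theorem~1.5]{CLPW}, $H^*(\M_{g,n})$ is of Tate type, so, $\M_{g,n}$ being smooth, $H^*_c(\M_{g,n})$ is of Tate type too, and hence $\gr^W_kH^*_c(\M_{g,n})=0$ for every odd $k$; in particular $\gr^W_{13}H^*_c(\M_{g,n})=0$. It remains to handle $3g+2n=25$. Since $2n$ is even this forces $g$ odd, and taking $2g-2+n>0$ into account we are reduced to the four pairs $(g,n)\in\{(1,11),(3,8),(5,5),(7,2)\}$.

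For these I would use the graph complex $\mathsf{C}_{g,n}$ computing $\gr^W_{13}H^*_c(\M_{g,n})$ that is produced from the stable presentation of $H^{13}(\Mb_{g,n})$. In Deligne's weight spectral sequence for $\M_{g,n}\subset\Mb_{g,n}$, the group $\gr^W_{13}H^{13+p}_c(\M_{g,n})$ is the degree-$p$ cohomology of the complex whose degree-$p$ term is $\bigoplus_{\Gamma}\big(H^{13}(\Mb_\Gamma)\otimes\det(\ZZ^{E(\Gamma)})\big)^{\Aut\Gamma}$, summed over stable graphs $\Gamma$ of genus $g$ with $n$ legs and $p$ edges, with differential the alternating sum of the boundary restriction maps. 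Since $\Mb_\Gamma=\prod_v\Mb_{g_v,n_v}$ is smooth and proper and the odd cohomology of $\Mb_{g_v,n_v}$ vanishes in degrees $<11$, a nonzero class in $H^{13}(\Mb_\Gamma)$ must be, up to permuting factors, either a product of a copy of $\s_{12}\subseteq H^{11}$ of one vertex with a tautological class in $H^2$ of some factor, or a copy of $\lstw\subseteq H^{13}$ of a single vertex. In both cases some vertex $v$ must have $H^{11}(\Mb_{g_v,n_v})\ne 0$ or $H^{13}(\Mb_{g_v,n_v})\ne 0$, which forces $v$ to be of type $(1,\ge 11)$ or to have genus $\ge 2$; moreover $H^{11}(\Mb_{1,11})=\s_{12}$ transforms under $S_{11}$ by the sign character, so a self-loop or a multiple edge at such a vertex contributes a sign which, together with the orientation twist $\det(\ZZ^{E(\Gamma)})$, frequently annihilates the corresponding chain group. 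These constraints cut $\mathsf{C}_{g,n}$ down, in each of the four cases, to a small explicit complex.

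The case $(g,n)=(1,11)$ is then immediate: the only stable graph of genus $1$ with $11$ legs having a genus-$1$ vertex with at least $11$ half-edges is the trivial one (any edge, after attaching stable genus-$0$ tails to absorb its endpoints, requires more than $11$ markings), and $H^{13}(\Mb_{1,11})\cong H^9(\Mb_{1,11})=0$ by hard Lefschetz, so $\mathsf{C}_{1,11}=0$. For $(g,n)\in\{(3,8),(5,5),(7,2)\}$ the complex $\mathsf{C}_{g,n}$ need not vanish at the chain level, and one must check that its cohomology is zero—either by showing that no decorated stable graph survives the automorphism and orientation constraints, or, where generators do survive, by exhibiting an acyclic matching on them in the style of \cite{CGP21,PayneWillwacher21}. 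This is the main obstacle: for $(7,2)$ in particular, with only two marked points the extra genus is forced into loops and pendant components near the $\s_{12}$-vertex, and one has to track simultaneously how the automorphisms of these graphs act on the sign representation $\s_{12}$, on the $H^2$-decoration, and on $\det(\ZZ^{E(\Gamma)})$. The precise list of independent $\psi$- and $\kappa$-decorated classes furnished by the stable presentation of $H^{13}(\Mb_{g,n})$ in \cite{CLPW} is what makes this bookkeeping manageable and ultimately forces the claimed vanishing.
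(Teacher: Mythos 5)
Your reduction is fine as far as it goes: the Tate-type bound of \cite[Theorem~1.5]{CLPW} disposes of $3g+2n\le 24$, and the parity argument correctly isolates the four pairs $(1,11),(3,8),(5,5),(7,2)$ with $3g+2n=25$. The case $(1,11)$ is also handled correctly. But the proposal has a genuine gap: for $(3,8)$, $(5,5)$ and $(7,2)$ you explicitly defer the actual vanishing argument (``one must check that its cohomology is zero \ldots\ This is the main obstacle''), offering only the hope of an acyclic matching and the observation that signs ``frequently'' kill chain groups. That is precisely the content of the proposition in these cases, and it is not supplied. Working directly with the full $E_1$-page of the weight spectral sequence, as you propose, makes this hard: for $(7,2)$ there are many nonzero decorated stable graphs (e.g.\ with weight-zero vertices of positive genus absorbing the extra genus), and showing that the resulting complex is acyclic by hand is a nontrivial computation that the proposal does not carry out.

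The paper avoids this case analysis entirely. It first replaces $\GK^{13}_{g,n}$ by the quasi-isomorphic quotient $\bGK^{13}_{g,n}$ (Proposition~\ref{prop:bGK}), in which weight-zero vertices of positive genus, self-loops at weight-zero vertices, and most weight-$2$ configurations are killed. It then introduces the excess $E(g,n)=3g+2n-25$ and shows (Lemma~\ref{lem:excess13}) that excess is additive over the blown-up components of a generator and non-negative on each component, using \eqref{equ:exc comp}. When $E(g,n)\le 0$ every component must therefore have excess exactly zero; the only excess-zero components are $\omega$--$j$ edges, $\omega$-tripods, and the crossed $\omega$-tripod, and since every generator must contain exactly one crossed edge, it must contain the crossed tripod --- which vanishes by the symmetry relation \eqref{tripodzero}. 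Hence the reduced complex has no nonzero generators at all for $3g+2n\le 25$, and no cancellation or matching argument is needed. The missing idea in your proposal is exactly this excess bookkeeping on the \emph{reduced} complex, which converts the three problematic cases into an empty complex rather than a complex whose acyclicity must be verified.
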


We also compute $\gr^W_{13}H^*_c(\M_{g,n})$ in the first cases where it does not vanish, when $3g + 2n$ is $26$ or $27$. Let $\delta_{0,n}$ denote the Kronecker delta function, i.e., $\delta_{0,n} = 1$ if $n = 0$ and is $0$ otherwise. For a partition $\lambda$ of a positive integer $n$, let $V_\lambda$ denote the corresponding Specht module, which is an irreducible representation of $\ss_n$.

\begin{thm} \label{thm:lowexc13}
    Suppose $3g + 2n \in \{26, 27\}$. Then $\gr_{13}^W H^*_c(\M_{g,n})$ is nonzero only in degree $$k(g,n) = 3g + n - 2 - \delta_{0,n},$$ and there is an $\ss_n$-equivariant isomorphism $\gr_{13}^W H^{k(g,n)}_c(\cM_{g,n}) \cong Z_{g,n} \otimes \lstw$, where
\begin{align*}
    Z_{1,12} & \cong V_{21^{10}} & Z_{2,10} & \cong V_{1^{10}} & Z_{3,9} & \cong V_{1^{9}} \\
    Z_{4,7} & \cong V_{1^{7}} & Z_{5,6} & \cong V_{1^{6}} \oplus V_{21^4}^{\oplus 2} & Z_{6,4} & \cong V_{1^{4}} \\ Z_{7,3} & \cong V_{1^{3}} \oplus V_{21}^{\oplus 2}& Z_{8,1} & \cong \Q & Z_{9,0} & \cong \Q 
\end{align*}
In particular, we have $\gr_{13}^W H^{24}_c(\M_9) \cong \lstw$.
\end{thm}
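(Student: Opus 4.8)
The plan is to compute $\gr^W_{13}H^*_c(\M_{g,n})$ from the boundary stratification of $\Mb_{g,n}$ and the weight spectral sequence it induces, as in \cite{CGP21, PayneWillwacher24}. Concretely, $\gr^W_{13}H^{*}_c(\M_{g,n})$ is the cohomology of the complex whose term in graph degree $p$ is $\bigoplus_{\Gamma}\big(H^{13}(\Mb_\Gamma)\otimes\det\QQ^{E(\Gamma)}\big)^{\Aut\Gamma}$ — the sum over stable graphs $\Gamma$ of genus $g$ with $n$ legs and $p$ edges, where $\Mb_\Gamma=\prod_v\Mb_{g_v,n_v}$ — with differential the alternating sum of restrictions to one-edge-deeper strata; a generator in graph degree $p$ then contributes in $H^{13+p}_c$. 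A Künneth computation, together with the vanishing of $H^j(\Mb_{g',n'})$ for odd $j\le 9$, shows that $H^{13}(\Mb_\Gamma)$ can be nonzero only if $\Gamma$ has a vertex with $H^{13}\ne 0$, or a pair of vertices contributing $H^{11}$ and $H^2$ while all other vertices contribute $H^0$; in either case the contribution is $\lstw=\l\otimes\s_{12}$ times an $\ss_n$-representation, since $H^{13}(\Mb_{g',n'})$ is $\lstw$-isotypic and $H^{11}\otimes H^2$ is $\s_{12}\otimes\l$-isotypic. Feeding in the genus-stable presentation of $H^{13}(\Mb_{g,n})$ and the presentation of $H^{11}$ from \cite[Section~4]{CLPW} turns this into an explicit finite graph complex, whose generators are decorated stable graphs carrying one distinguished vertex supplying the $\s_{12}$-factor — in the range at hand, a genus-$1$ vertex of valence $\ge 11$ decorated by $\omega$ — together with exactly one further ``Lefschetz'' decoration ($\psi$ or $\kappa_1$ on some vertex, or a four-valent genus-$0$ vertex) supplying the $\l$-factor. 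This makes manifest the shape $Z_{g,n}\otimes\lstw$ of the claimed answer.

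Next I would invoke Proposition~\ref{prop:wt 13 vanishing}, which gives vanishing for $3g+2n\le 25$ and, read through the graph complex, shows that a $\s_{12}$-carrying vertex of valence $\ge 11$ together with its extra $\l$-decoration already consumes almost the entire ``budget'' recorded by $3g+2n$. Hence when $3g+2n\in\{26,27\}$ there is essentially no room left for a nontrivial residual tropical subgraph, and for each of the nine pairs $(g,n)$ the contributing generators form a short, explicit list — which I would enumerate with care to include generators with loops at the special vertex, with parallel edges, and with small genus-$0$ or genus-$1$ ``filler'' vertices. For $(8,1)$ and $(9,0)$ the resulting complex is small enough to compute by hand and gives $Z_{8,1}\cong Z_{9,0}\cong\QQ$; since $k(9,0)=3\cdot9+0-2-1=24=\dim\M_9$, this yields the ``in particular'' statement $\gr^W_{13}H^{24}_c(\M_9)\cong\QQ\otimes\lstw=\lstw$, and with it the nonvanishing of the middle cohomology of $\M_9$ via Poincar\'e duality. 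The pair $(1,12)$ is the degenerate case where no gluing occurs: the complex reduces to $\gr^W_{11}H^{11}(\M_{1,12})$, already computed in \cite{CLPW}, whence $Z_{1,12}\cong V_{21^{10}}$.

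For the remaining pairs the steps are: (i) compute the $\Aut(\Gamma)$-invariants of each term, tracking the sign from $\det\QQ^{E(\Gamma)}$, so that automorphisms transposing two parallel edges or exchanging two isomorphic branches act by $-1$, together with the sign in the $\ss_n$-action coming from the fact that $\ss_{11}$ acts on $H^{11}(\Mb_{1,11})=\s_{12}$ by the sign character; (ii) compute the single surviving differential and read off its cohomology; (iii) where a genuine residual factor appears, identify it with a known value — the reduced homology of a small $\Delta_{g',n'}$ or of a tiny commutative graph complex — and assemble via the relevant induced representations. The summands $V_{21^4}^{\oplus 2}$ in $Z_{5,6}$ and $V_{21}^{\oplus 2}$ in $Z_{7,3}$ should emerge from the two inequivalent placements of the $\l$-decoration relative to the $\omega$-vertex; decomposing the induced representations into Specht modules then reproduces the table, and one checks throughout that $\gr^W_{13}H^*_c(\M_{g,n})$ is concentrated in the single degree $k(g,n)=3g+n-2-\delta_{0,n}$ in which the surviving generators sit.

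The main obstacle is the first step: setting up the graph complex with every sign and $\Aut$-action correct and, crucially, importing the \emph{relations} — not just the generators — of the \cite{CLPW} presentations of $H^{13}(\Mb_{g,n})$ and $H^{11}(\Mb_{g,n})$, since it is precisely those relations that pin $Z_{g,n}$ down rather than yielding a mere upper bound. A secondary difficulty is the case analysis itself: although the ``no room'' principle makes the generator lists short, one must genuinely rule out stray contributions (extra loops, double edges, small components) and verify that the relevant differentials are injective or surjective, so that the cohomology really is concentrated in one degree and equal to the listed representation; the $\ss_n$-equivariant bookkeeping is most delicate for $(5,6)$ and $(7,3)$, where $Z_{g,n}$ is not merely a sum of trivial and sign representations.
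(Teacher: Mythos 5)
Your proposal follows essentially the same route as the paper's proof: the weight spectral sequence / Getzler--Kapranov graph complex in weight 13, reduction using the presentations of $H^{13}(\Mb_{g,n})$ and $H^{11}(\Mb_{g,n})$ from \cite{CLPW} (so that generators have either one weight-13 vertex or a weight-11 plus a weight-2 vertex), an additivity-of-excess argument showing that $3g+2n\in\{26,27\}$ leaves room for only a short list of blown-up components, and then a case-by-case enumeration of generators and differentials (Sections~\ref{sec:weight13}--\ref{sec:low excess13}). The outline is sound and matches the paper, though the substance of the argument lies in the explicit generator lists, relations, and differential computations in excess $1$ and $2$, which your plan describes but defers.
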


\noindent  Note that  $\gr_2^W H^{24}_c(\M_9)$ is also nonzero; the shift of $W_0H^6_c(\M_3) \wedge W_0H^{15}_c(\M_6)$ appearing in the first term of  \cite[Theorem~1.2]{PayneWillwacher21} contributes a summand isomorphic to $\l$.

\subsection{Weight 15 cohomology and appearances of \texorpdfstring{$\s_{16}$}{S16}}

Next, we describe the $\s_{16}$-isotypic part of $H^{15}(\Mb_{g,n})$. Note that $\s_{16}$ is not a subquotient of the cohomology of $\Mb_{g,n}$ for $g \neq 1$, by \cite[Theorem~1.1]{CLP-STE}. Hence, the essential case is when $g = 1$. There is a canonical splitting
\[
H^{15}(\Mb_{1,15}) = \mathsf{S}_{16} \oplus V
\]
where $V^\semis \cong \bigoplus \mathsf{L}^2\mathsf{S}_{12},$ by \cite[Lemma 8.3]{CLP-STE}. The subspace $\mathsf{S}_{16} \subset H^{15}(\Mb_{1,15})$ is the image of 
\[
H^{15}_c(\M_{1,15}) \rightarrow  H^{15}(\Mb_{1,15}),
\]
or, equivalently, the kernel of the restriction map $H^{15}(\Mb_{1,15}) \to H^{15}(\partial \M_{1,15})$. The following description of $H^{15}(\Mb_{1,n})$ is well-known to experts (cf. \cite{Getzler, Petersenappendix}).

Let $K^{15}_n := V_{(n-14)1^{14}}$ denote the $\ss_n$-representation associated to the hook shape Young diagram of size $n$ with 15 boxes in the first column. (For $n < 15$, we set $K^{15}_n := 0.)$
It is an irreducible $\ss_n$-representation of dimension $\binom{n-1}{14}$ generated by elements $k_P$, for ordered subsets $P \subset \{1, \ldots, n\}$ of size $15$, with relations  
\begin{equation} \label{11rels} 
k_{\sigma(P)} = \sgn(\sigma) \cdot k_P, \quad \mbox{and} \quad \sum_{j = 0}^{15}(-1)^j \cdot k_{\{a_0, \ldots, \widehat{a_j}, \ldots, a_{15}\}} = 0,
\end{equation}
for any permutation $\sigma$ of $P$, and any size $16$ ordered subset $\{a_0, \ldots, a_{15} \} \subset \{1, \ldots, n \}$.

\begin{thm} \label{thm:H15}
There is an $\ss_n$-equivariant direct sum decomposition of the form
\[
H^{15}(\Mb_{1,n}) \cong (K^{15}_n \otimes_\Q \mathsf{S}_{16}) \oplus V_n,
\]
where the underlying motivic structure of the second summand is $V_n^\semis \cong \bigoplus \mathsf{L}^2\mathsf{S}_{12}$.
\end{thm}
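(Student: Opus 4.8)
The plan is to reduce everything to the $n = 15$ case (which is $\mathsf{S}_{16} \oplus V$ by the cited Lemma 8.3 of CLP-STE) and then build up to general $n$ using the boundary structure of $\Mb_{1,n}$ together with the Leray/Getzler–Petersen decomposition of $H^*(\Mb_{1,n})$ into pieces indexed by the combinatorics of the $n$ marked points. The key point is that $\mathsf{S}_{16}$ appears only through the ``genuinely genus one with 15 interacting points'' part, so its multiplicity space is governed by the $\ss_n$-representation generated by the cycles $k_P$ for $15$-element ordered subsets $P$, modulo the shuffle relations \eqref{11rels}; I would identify this representation with the hook Specht module $K^{15}_n$.

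\smallskip

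\textbf{Step 1: Locate the $\s_{16}$-isotypic part.} By \cite[Theorem~1.1]{CLP-STE}, $\s_{16}$ is not a subquotient of $H^\bullet(\Mb_{g,n})$ for $g \ne 1$, so every copy of $\s_{16}$ in $H^{15}(\Mb_{1,n})$ must arise from the open part $\M_{1,n}$ rather than from boundary strata of positive-genus type or from lower-weight Tate classes tensored with $\s_{16}$ (there are none in the relevant weight range). Concretely, using the description of $H^\bullet(\Mb_{1,n})$ via the standard stratification — $H^{15}(\Mb_{1,n})$ is assembled from $H^\bullet_c$ of the open strata, each of which is a product of an $\M_{1,m}$ factor with $\M_{0,\bullet}$ factors and finite quotients — the only strata whose cohomology contains $\s_{16}$ are those of the form $\M_{1,m}$ with $m \ge 15$, contributing via $W_{15}H^{15}(\Mb_{1,m}) \supset \s_{16}$. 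I would pull back the $n=15$ statement along the forgetful/gluing maps to show the $\s_{16}$-isotypic part of $H^{15}(\Mb_{1,n})$ is a direct sum of copies of $\s_{16}$, with multiplicity space a specific $\ss_n$-representation $M_n$ built from the classes $k_P$.

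\smallskip

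\textbf{Step 2: Identify the multiplicity representation.} The classes $k_P$, for $P$ an ordered $15$-subset of $\{1,\dots,n\}$, satisfy the antisymmetry and shuffle relations \eqref{11rels} (antisymmetry because $\s_{16}$ lives in an odd-degree cohomology group and permuting marked points acts by sign on the top class of $\M_{1,15}$; the shuffle/Arnold-type relation because of the relation among the $16$ maps forgetting one of $16$ points, combined with the vanishing $H^{15}(\Mb_{1,14}) \not\supset \s_{16}$). One then checks these are \emph{all} the relations — e.g.\ by a dimension count: the span has dimension $\binom{n-1}{14}$, matching $\dim K^{15}_n$, and the representation is irreducible of hook type $(n-14,1^{14})$ since the $k_P$ transform as a wedge. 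This gives $M_n \cong K^{15}_n$.

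\smallskip

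\textbf{Step 3: Identify the complement.} The complementary summand $V_n$ is, by construction, the part of $H^{15}(\Mb_{1,n})$ not involving $\s_{16}$; since $H^{15}(\Mb_{1,n})$ has no odd-weight Tate classes in this range and $\s_{16}$ is the only non-Tate irreducible weight-$15$ structure other than $\mathsf{L}^2\mathsf{S}_{12}$ occurring here (again by \cite[Theorem~1.1]{CLP-STE} together with \cite[Lemma 8.3]{CLP-STE} for $n=15$, and by pushing through the stratification for general $n$), the semisimplification of $V_n$ must be a sum of copies of $\mathsf{L}^2\mathsf{S}_{12}$. The splitting itself is canonical because $(K^{15}_n \otimes \s_{16})$ is characterized as the image of $H^{15}_c(\M_{1,n}) \to H^{15}(\Mb_{1,n})$ (equivalently the kernel of restriction to the boundary), exactly as in the $n=15$ case.

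\smallskip

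\textbf{Main obstacle.} The delicate part is Step 2 — proving that \eqref{11rels} generates \emph{all} relations among the $k_P$, i.e.\ that the multiplicity space is exactly the hook module and not a proper quotient of the abstract module defined by those relations. I expect to handle this either by an explicit basis argument (the classes $k_P$ with $1 \in P$ and the remaining $14$ elements forming an increasing sequence are linearly independent and number $\binom{n-1}{14}$), or by invoking the known structure of $H^{15}(\Mb_{1,n})$ from \cite{Getzler} and \cite{Petersenappendix} to pin down the $\s_{16}$-multiplicity, and then matching $\ss_n$-characters. A secondary subtlety is bookkeeping the finite stabilizers of the boundary strata (elliptic involutions, automorphisms of $\M_{0,\bullet}$ pieces) to be sure no extra copies of $\s_{16}$ are introduced or killed; this is routine but must be done carefully.
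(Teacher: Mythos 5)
Your proposal is correct in outline and would yield the theorem, but it is a much longer route than the paper's, which disposes of the whole statement in three lines. The paper writes down the right exact sequence $H^{13}(\widetilde{\partial\M_{1,n}}) \xrightarrow{\alpha} H^{15}(\Mb_{1,n}) \to W_{15}H^{15}(\M_{1,n}) \to 0$, observes that $\alpha$ has Hodge type $(1,1)$ so its image has no $(15,0)$ or $(0,15)$ component and hence contains no copy of $\s_{16}$, concludes that the $\s_{16}$-isotypic parts of $H^{15}(\Mb_{1,n})$ and $W_{15}H^{15}(\M_{1,n})$ coincide, and then simply \emph{cites} \cite[Proposition~2.2]{CLP-STE} for the isomorphism $W_{15}H^{15}(\M_{1,n}) \cong K^{15}_n \otimes \s_{16}$. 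Your Step~1 contains the same key idea (the boundary cannot contribute $\s_{16}$), though phrased via the full stratification and the classification theorem rather than via the single Gysin map and its Hodge type; your Step~3 matches what is implicit in the paper. The real divergence is Step~2: what you flag as the ``main obstacle'' --- proving that \eqref{11rels} are \emph{all} the relations among the $k_P$ --- is precisely the content of the proposition the paper cites, so you are proposing to reprove a known input. That is legitimate and makes the argument self-contained, and your basis plan (the $\binom{n-1}{14}$ classes with $1\in P$ and increasing remainder) is the right one; the paper itself carries out exactly this for $H^{15,0}(\Mb_{1,n})$ immediately after the proof, by the argument of \cite[Section~2.3]{CanningLarsonPayne}. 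Two cautions: your heuristic that the shuffle relation follows from ``the relation among the 16 forgetful maps plus vanishing for 14 points'' is not yet a proof --- the precise alternating-sum relation has to be established by computing $H^{15,0}(\Mb_{1,16})$; and your closing characterization of $K^{15}_n\otimes\s_{16}$ as the image of $H^{15}_c(\M_{1,n}) \to H^{15}(\Mb_{1,n})$ for general $n$ itself requires the Hodge-type argument on $\alpha$ that the paper makes explicit, so you should not treat it as automatic.
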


\noindent The isomorphism identifies $(\Q \cdot k_P) \otimes \s_{16}$ with the pullback of $\s_{16} \subset H^{15}(\Mb_{1,P})$ under the forgetful map $\Mb_{1,n} \to \Mb_{1,P}$.

We now focus on the $\s_{16}$-isotypic subspace of $\gr_{15}^W H^*_c(\M_{g,n})$. By Theorem~\ref{thm:H15}, we have a direct sum decomposition of Hodge structures or $\ell$-adic Galois representations
\begin{equation} \label{eq:U+W}
\gr^W_{15} H^*_c(\M_{g,n}) \cong U_{g,n} \oplus W_{g,n}, 
\end{equation}
where
\[
U_{g,n}^{\semis} \cong \bigoplus \s_{16} \quad \mbox{and} \quad W_{g,n}^{\semis} \cong \bigoplus \l^2 \s_{12}.
\]
Recall also that $\gr_{15}^W H^*_c(\M_{g,n}) = 0$ for $3g + 2n \leq 24$, by \cite[Theorem~1.5]{CLPW}. 
We have the following improved vanishing result for $U_{g,n}$.

\begin{prop}\label{prop:wt 15 vanishing}
    If $3g + 2n \leq 32$ then $U_{g,n} = 0$.
\end{prop}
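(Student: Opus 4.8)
The plan is to bound $U_{g,n}$ using the Deligne weight spectral sequence for $\M_{g,n} \subset \Mb_{g,n}$, that is, via the graph complex attached to the boundary, the same machinery that underlies the weight-$13$ complex above and the work of \cite{CGP21, PayneWillwacher21}. Since $\Mb_{g,n}$ is smooth and proper and $\M_{g,n}$ is the complement of its normal crossings boundary, $\gr^W_{15}H^*_c(\M_{g,n})$ is computed by a complex whose terms are $\Aut$-(anti)invariant parts of the groups $H^{15}(\Mb_\Gamma)$, where $\Mb_\Gamma = \prod_v \Mb_{g_v,n_v}$ runs over the normalized boundary strata, indexed by stable graphs $\Gamma$ of type $(g,n)$ (with $\Mb_\Gamma = \Mb_{g,n}$ for the graph with no edges). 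As each $\Mb_{g_v,n_v}$ is smooth and proper, its cohomology is pure; hence, by the K\"unneth formula, the classification of irreducible motivic structures of weight $\leq 15$ in \cite{BergstromFaberPayne, CLP-STE}, and Theorem~\ref{thm:H15}, the $\s_{16}$-isotypic part of $H^{15}(\Mb_\Gamma)$ is the direct sum, over those vertices $v_0$ of genus $1$ and valence $\geq 15$, of the summand $K^{15}_{n_{v_0}} \otimes \s_{16} \subseteq H^{15}(\Mb_{1,n_{v_0}})$, all other vertices contributing only $H^0 = \Q$. It follows that $U_{g,n}$ is a subquotient of
\[
\bigoplus_{(\Gamma,v_0)} \bigl(K^{15}_{n_{v_0}} \otimes \det E(\Gamma)\bigr)^{\Aut(\Gamma)} \otimes \s_{16},
\]
the sum over isomorphism classes of pairs $(\Gamma,v_0)$ with $\Gamma$ a stable graph of type $(g,n)$ and $v_0$ a genus-$1$ vertex of valence $\geq 15$, and $\det E(\Gamma)$ the sign representation of $\Aut(\Gamma)$ on the edge set of $\Gamma$. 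It therefore suffices to prove that each such summand vanishes when $3g + 2n \leq 32$.

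So fix such a pair, set $m := n_{v_0} \geq 15$, and let $k$ be the number of loops of $\Gamma$ based at $v_0$. The first step is a sign obstruction. The $k$ loop-involutions generate a subgroup $(\ss_2)^k \subseteq \Aut(\Gamma)$ that fixes every edge of $\Gamma$, hence acts trivially on $\det E(\Gamma)$, while acting on the half-edges and legs at $v_0$ by $k$ disjoint transpositions; thus the summand embeds into $(K^{15}_m)^{(\ss_2)^k} \otimes \s_{16}$. Since $K^{15}_m = V_{(m-14)1^{14}}$, Pieri's rule identifies $\dim (K^{15}_m)^{(\ss_2)^k}$ with the number of chains of $k$ successive horizontal $2$-strip removals from the hook $(m-14,1^{14})$; as each such removal deletes at least one cell in a column of index $\geq 2$, of which this hook has only $m-15$, no chain of length $k > m - 15$ exists. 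Hence the summand vanishes unless $k \leq m - 15$; in particular a single loop at $v_0$ already kills it when $m = 15$, since $K^{15}_{15}$ is the sign representation of $\ss_{15}$.

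The second step is a resource count. With $d := \dim \M_{g,n} = 3g - 3 + n$, and using $d = \sum_v \dim \M_{g_v,n_v} + |E(\Gamma)|$ together with $\dim \M_{1,m} = m$ and $\dim \M_{g_v,n_v} \geq 0$ for the remaining vertices, one has $3g + 2n = d + n + 3 \geq m + |E(\Gamma)| + n + 3$. Moreover $|E(\Gamma)|$ is at least the number of edges meeting $v_0$, which equals $k$ plus the number of non-loop half-edges at $v_0$, namely $k + (m - \ell_0 - 2k) = m - \ell_0 - k$, where $\ell_0$ denotes the number of legs at $v_0$; and $n \geq \ell_0$. Hence $|E(\Gamma)| + n \geq m - k$, so $3g + 2n \geq 2m - k + 3$. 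Combined with $k \leq m - 15$ from the first step, this gives $3g + 2n \geq m + 18 \geq 33$, contradicting $3g + 2n \leq 32$. (The same estimate applied to the one-vertex graph shows that $H^{15}(\Mb_{g,n})$ itself has nonzero $\s_{16}$-isotypic part only when $g = 1$ and $n \geq 15$.) Hence every summand vanishes, and $U_{g,n} = 0$.

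I expect the main obstacle to lie in the bookkeeping of the first step: determining exactly which vertex decorations feed the $\s_{16}$-isotypic part of weight $15$, and handling the automorphism signs and the $\det E(\Gamma)$-twist correctly. The loop bound $k \leq m - 15$ is essential; without it the resource count alone would permit heavily-looped configurations, such as a genus-$1$ vertex carrying seven loops and a single leg (type $(8,1)$), to contribute at values of $3g + 2n$ well below $33$. Once the correct graph complex is in place the inequality $3g + 2n \geq 2m - k + 3$ is elementary, and it is sharp: equality holds precisely for the graphs that should produce the first nonvanishing of $U_{g,n}$, at $3g + 2n = 33$.
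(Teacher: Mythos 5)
Your proof is correct, but it takes a genuinely different route from the paper's. The paper proves this statement (as Corollary~\ref{cor:Eneg15}) only after building the reduced graph complexes of Section~\ref{sec:weight15}: it passes from $\GK^{15,0}_{g,n}$ to the quasi-isomorphic quotient $\bGK^{15,0}_{g,n}$ and then to $B^{15}_{g,n}$, and shows that every generator of $B^{15}_{g,n}$ has non-negative excess additive over blown-up components (citing \cite[Lemma 4.2]{PayneWillwacher24}), while any generator has excess at most $E'(g,n)=3g+2n-33$; so for $3g+2n\le 32$ the reduced complex is zero. You instead work term-by-term on the unreduced $E_1$-page of the weight spectral sequence, isolating the $\s_{16}$-isotypic part via Theorem~\ref{thm:H15} and the classification of low-weight motivic structures, and then killing each term by combining (i) the vanishing of $(\ss_2)^k$-invariants of the hook Specht module $K^{15}_m$ unless $k\le m-15$, and (ii) the elementary dimension count $3g+2n\ge 2m-k+3$. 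These are really the same two ingredients in different packaging --- in the paper's blown-up language, your sign obstruction is what makes $\omega$--$\omega$ self-edges vanish and is the essential content of the cited excess non-negativity lemma, while your resource count is the excess bound --- but your version is more self-contained: it needs none of the quasi-isomorphisms or the auxiliary complexes $X_{g,n}$, $B^{15}_{g,n}$, $C^{15}_{g,n}$, and your Pieri-rule treatment of loops at the special vertex handles all half-edge labelings uniformly. What it buys less of is reusability: the paper's machinery is needed anyway for the nonvanishing computations in Sections~\ref{sec:low excess15}--\ref{sec:inj}, whereas your argument only yields the vanishing statement. Two cosmetic points: $\dim (K^{15}_m)^{(\ss_2)^k}$ is the sum of $\dim V_{\lambda_k}$ over chains of horizontal $2$-strip removals rather than the number of chains (irrelevant for vanishing), and your displayed direct sum should strictly be indexed by isomorphism classes of pairs with the twist by $\det E(\Gamma)$ restricted to the stabilizer of $v_0$ --- but since you only ever use the subgroup $(\ss_2)^k$ generated by loop involutions, which fixes $v_0$ and every edge, the bound you need is unaffected.
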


\noindent In particular, in the key special case where there are no marked points, the motivic structure $\s_{16}$ does not appear in $\gr_{15}^WH^*_c(\M_{g})$ for $g \leq 10$.  

Following the usual convention, we omit the second subscript when $n = 0$ and write $U_g := U_{g,0}$. Let $U^k_{g} \subset \gr^W_{15} H^k_c(\M_{g})$ denote the degree $k$ part of $U_{g}$. 

\begin{thm} \label{thm:s16}
    For $g = 11$, the $\s_{16}$-isotypic subspace of $\gr^W_{15}H^*_c(\M_{11})$ is given by
    \[
    U_{11}^k \cong \begin{cases}
        \s_{16} & \mbox{ for } k = 30, \\
        0 & \mbox{ otherwise.}
    \end{cases}
    \]
    For $g = 12$, we have $U_{12} = 0$.
\end{thm}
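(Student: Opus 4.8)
The plan is to compute $U_g := U_{g,0}$ for $g \in \{11, 12\}$ by means of a graph complex $\mathsf{C}_g := \mathsf{C}_{g,0}$ built from the description of $H^{15}(\Mb_{1,m})$ in Theorem~\ref{thm:H15}, in the spirit of the weight-$13$ graph complex of this paper and of the $\s_{12}$ graph complex of \cite{PayneWillwacher24}. First I would assemble $\mathsf{C}_{g,n}$ as follows. In Deligne's weight spectral sequence for $\M_{g,n} \subset \Mb_{g,n}$, a boundary stratum is a product $\prod_v \Mb_{g_v, n_v}$ indexed by a stable graph, and its contribution to $\gr^W_{15} H^*_c$ has weight $15$; since each $\Mb_{g_v,n_v}$ is smooth and proper and $\s_{16}$ is a subquotient of $H^*(\Mb_{g_v,n_v})$ only when $g_v = 1$ — and then only of $H^{15}$, as the $K^{15}_{n_v}$-isotypic summand, by \cite[Theorem~1.1]{CLP-STE} and Theorem~\ref{thm:H15} — the $\s_{16}$-isotypic part of the $E_1$-page is the subcomplex $\mathsf{C}_{g,n}$ spanned by stable graphs $\Gamma$ of type $(g,n)$ carrying a distinguished vertex $v_0$ of genus $1$ and valence $m \geq 15$, decorated at $v_0$ by an element of $K^{15}_m$ (subject to the relations \eqref{11rels}) together with an orientation, all other vertices being genus-labeled and otherwise undecorated. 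A graph with $e$ edges sits in cohomological degree $e + 15$; the differential is the signed sum of the edge expansions (un-contractions), namely those that split $v_0$ into a genus-$1$ vertex and a stable genus-$0$ vertex and those that split a vertex other than $v_0$, since any expansion that destroys the genus-$1$ component at $v_0$ lands in a stratum where $\s_{16}$ does not occur; and $U_{g,n} \cong H^*(\mathsf{C}_{g,n}) \otimes \s_{16}$.

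Next I would cut down the list of graphs. The identity $g = 1 + \sum_{w \neq v_0} g_w + b_1(\Gamma)$ forces a graph contributing to $\mathsf{C}_g$ to have exactly $e = g - 1 + p - \sum_{w \neq v_0} g_w$ edges, where $p$ is the number of non-distinguished vertices; together with $m \geq 15$ and stability this confines $e$, hence the cohomological degree, to a short interval. Its top value is $\dim \M_g$: a maximally degenerate graph has $v_0$ of valence exactly $15$ and $2g - 17$ further vertices, each of genus $0$ and valence $3$, with $3g - 18$ edges, giving degree $(3g - 18) + 15 = 3g - 3$. It is convenient to reorganize $\mathsf{C}_g$ by amputating $v_0$: recording its self-loops, its outgoing hairs, and the part of the $K^{15}$-decoration surviving the loop involutions presents $\mathsf{C}_g$ as a finite, explicitly enumerable hairy graph complex with at most $15$ hairs and loop order at most $g - 1$, and makes the connection with the hairy graph complexes of \cite{PayneWillwacher24} precise. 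Many loop-heavy generators are already forced to vanish by their automorphisms.

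To finish, consider $g = 11$: the top degree is $30$, there are no generators in degree $31$, so $U^{30}_{11}$ is the cokernel of the differential $\mathsf{C}^{29}_{11} \to \mathsf{C}^{30}_{11}$. I would exhibit an acyclic matching (discrete Morse theory) showing $\mathsf{C}_{11}$ is exact below degree $30$, and then evaluate the top cokernel — by hand after the Morse collapse, or by a short equivariant linear-algebra computation — obtaining a one-dimensional answer; hence $U^{30}_{11} \cong \s_{16}$ and $U^k_{11} = 0$ for $k \neq 30$. For $g = 12$ the same method applies: a Morse matching collapses $\mathsf{C}_{12}$ onto its top-degree piece (degree $33 = \dim \M_{12}$), after which one checks that the top cohomology also vanishes — equivalently, that the differential onto the top-degree graphs is already surjective — so $U_{12} = 0$. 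As a cross-check, and as an alternative to these final cokernel computations once the Morse reductions are in hand, one may compute the $\s_{16}$-Euler characteristic $\sum_k (-1)^k \dim U^k_g$ in the style of \cite{CLPW}; it should equal $1$ for $g = 11$ and $0$ for $g = 12$.

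The crux is twofold. First, one must set up the signs and the relations \eqref{11rels} so that $\mathsf{C}_{g,n}$ computes precisely the $\s_{16}$-isotypic part, and in particular determine which loop-decorated generators die under their automorphisms; this is delicate because for $m = 15$ the decoration space $K^{15}_{15} = V_{1^{15}}$ is the sign representation of $\ss_{15}$, so its interplay with the edge orientation — where flipping a loop, or permuting two loops, at $v_0$ acts — must be tracked with care. Second, one must produce the acyclic matchings that collapse $\mathsf{C}_{11}$ and $\mathsf{C}_{12}$ onto their top degrees; the residual finite computations are then small, but the representation-theoretic bookkeeping with the hook-representation decorations $K^{15}_m = V_{(m-14)1^{14}}$ is where most of the work lies, and I expect at least part of it to be verified by computer.
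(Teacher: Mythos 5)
Your setup is sound and essentially reproduces the paper's starting point: the complex you call $\mathsf{C}_{g,n}$ is the type $(15,0)$ part $\GK^{15,0}_{g,n}$ of the Feynman transform (with the $K^{15}_m$-decorated genus-one vertex playing the role of $\omega_A$), your degree count $e+15$ and the identification of the top degree $3g-3$ are correct, and Theorem~\ref{thm:H15} justifies that only genus-one vertices can carry the $\s_{16}$ decoration. The problem is that everything after the setup is asserted rather than proved. The entire content of the theorem is (a) exactness of the complex below degree $30$ for $g=11$, (b) one-dimensionality of the top cokernel for $g=11$, and (c) vanishing of all cohomology for $g=12$; your proposal reduces each of these to ``an acyclic matching'' and ``a short equivariant linear-algebra computation'' that are never exhibited. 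The unreduced complex $\mathsf{C}_{11}$ has a large number of generators in degrees below $30$ (graphs with positive-genus undecorated vertices, with the special vertex of valence $>15$, with fewer vertices, etc.), and showing they all cancel is genuinely the hard part. The paper makes this tractable by two steps you do not supply: first, the chain of quasi-isomorphisms $\GK^{15,0}_{g,n}\to\bGK^{15,0}_{g,n}\leftarrow B^{15}_{g,n}$, where $B^{15}_{g,n}$ is a truncation of the acyclic auxiliary complex $X_{g,n}$ of \cite{PayneWillwacher24}; second, the additivity and non-negativity of the excess $E'(g,n)=3g+2n-33$ over blown-up components (Corollary~\ref{cor:Eneg15} and its refinements), which for $g=11$, $n=0$ (excess $0$) leaves exactly one generator --- five tripods with three $\omega$-legs each --- concentrated in degree $30$, so that the answer is immediate, and for $g=12$, $n=0$ (excess $3$) leaves a small explicit complex whose cohomology one computes to be zero as in \cite[Section~4.4]{PayneWillwacher24}.

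Two further cautions. Your proposed Euler-characteristic cross-check cannot close the gap: for $g=12$ a vanishing Euler characteristic is consistent with nonzero cohomology in cancelling degrees, and for $g=11$ the value $1$ only pins down the answer once concentration in a single degree is already known --- which is exactly what must be proved. And your description of the differential omits the $d_\xi$-term that creates a self-loop at a positive-genus weight-zero vertex (only the $d_\xi$-term at the decorated vertex dies, because $H^{15}(\MM_{0,m})=0$); in the unreduced complex these terms are present and must be handled, which is one more reason the reduction to $B^{15}_{g,n}$ (where such graphs are set to zero) is doing real work that your outline skips.
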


\noindent Thus, we have shown that the middle cohomology group $H^{30}(\M_{11})$ does not vanish, which was not previously known. See \S\ref{sec:low excess15} for a complete computation of $U_{g,n}$ in the cases where $3g + 2n \leq 36$.

Although we do not yet have a complete description of $U_{g}$, e.g., in terms of shifts of symmetric and exterior powers of weight zero cohomology groups of smaller moduli spaces, as in \cite[Theorem~1.2]{PayneWillwacher21}, we produce a large subspace of this form.

Let $\V_g^{r,k}$ denote the degree $k$ and genus $g$ part of the $r$-fold symmetric power
      \[
    \Sym^r\bigg(\bigoplus_{h} W_0H_c^*(\M_h)\bigg).  
    \]
Here we follow the Koszul sign convention, so the underlying graded vector space of the symmetric algebra on a graded vector space $U^*$ is $\Sym^*(U^{\mathrm{even}}) \otimes \bigwedge^*(U^{\mathrm{odd}})$.

\begin{thm}\label{thm:emb intro}
    There is an injective map
\begin{equation} \label{eq:inj}
\resizebox{.92\hsize}{!}{
$\begin{aligned}
     \big( \V^{14,k-29}_{g-1} 
    \oplus
    \V^{14,k-30}_{g-2} 
    \oplus
    \V^{13,k-30}_{g-3} 
    \oplus
    \V^{10,k-30}_{g-5} 
    \oplus
    \V^{7,k-30}_{g-7} 
    \oplus
    \V^{4,k-30}_{g-9} 
    \oplus
    \V^{1,k-30}_{g-11} 
     \big) \otimes \s_{16} 
     \to U^k_g.
    \end{aligned}$
    }
\end{equation}
  \end{thm}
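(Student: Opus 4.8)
The plan is to realize $U_g$ by a graph complex built from the boundary of $\overline{\M}_g$ and then exhibit the seven summands of \eqref{eq:inj} as classes obtained by gluing representatives of $W_0H^*_c(\M_h)$ onto a single $\s_{16}$-carrying vertex. Because $\s_{16}$ is not a subquotient of $H^*(\overline{\M}_g)$ for $g\ge 2$ \cite{CLP-STE}, the long exact sequence of the pair $(\overline{\M}_g,\partial\overline{\M}_g)$ gives, after taking weight-$15$ graded pieces and $\s_{16}$-isotypic parts, an isomorphism $U^n_g\cong\gr^W_{15}H^{n-1}(\partial\overline{\M}_g)_{\s_{16}}$. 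The Mayer--Vietoris spectral sequence of $\partial\overline{\M}_g$ degenerates on weight-$15$ pieces for weight reasons, and by Theorem~\ref{thm:H15} its $\s_{16}$-part is carried by stable graphs $\Gamma$ of genus $g$ with exactly one genus-$1$ vertex $v_0$ of valence $\ge 15$, decorated by a generator $k_P$ of $K^{15}_{\mathrm{val}(v_0)}$ modulo the relations \eqref{11rels}, all other vertices being undecorated; this is the method already used for weights $11$ and $13$ in \cite{PayneWillwacher21,PayneWillwacher24,CLPW}. The outcome is a cochain complex $\mathsf D^\bullet$ with $H^*(\mathsf D^\bullet)\otimes\s_{16}\cong U_g$, graded by number of edges, whose differential is a signed sum of vertex expansions. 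The only expansions acting at $v_0$ correspond to sprouting a rational tail off the genus-$1$ component (the $\delta_{\mathrm{irr}}$-type expansion turns $v_0$ into an undecorated genus-$0$ vertex with a loop, so it contributes $0$); each such term strictly lowers $\mathrm{val}(v_0)$ while keeping the decoration at valence $\ge 15$.

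Next I would produce the classes. For each of the seven ``core templates'' matching the summands of \eqref{eq:inj}, indexed by the genus defect $j\in\{1,2,3,5,7,9,11\}$ --- for $j\ge 2$ a specific small genus-$0$ core graph of first Betti number $j-1$ glued onto $v_0$ so that $\mathrm{val}(v_0)\ge 15$ with $r_j$ free bridge-slots; for $j=1$ the vertex $v_0$ inserted as a bivalent genus-$1$ vertex on an edge of one pendant piece and bridged to $13$ others, so $\mathrm{val}(v_0)=15$, $r=14$, and the degree shift becomes $29$ rather than $30$ --- one attaches bridgeless genus-$h_i$ graphs $\gamma_i$ to the $r$ slots, these being generators of the CGP graph complex $\mathsf G^{(h_i)}$, and symmetrizes over the slots with Koszul signs. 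Using the bridgeless reductions of \cite{CGP21}, the identification $H^*(\mathsf G^{(h)})\cong W_0H^*_c(\M_h)$, and K\"unneth, the span of these glued graphs --- more precisely, its image on the associated graded of the increasing filtration of $\mathsf D^\bullet$ by $\mathrm{val}(v_0)$ (a filtration by subcomplexes, by the previous paragraph), further refined by the size of the ``core'' --- has cohomology containing $\bigl(\bigoplus_j\V^{r_j}_{g-j}\bigr)\otimes\s_{16}$ in the degrees stated in \eqref{eq:inj}. Here the decoration $k_P$ on $v_0$ is chosen --- using that $K^{15}_m=0$ for $m<15$ --- so that every $v_0$-expansion annihilates a template graph, which makes each of the seven families an honest cycle of $\mathsf D^\bullet$. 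One would also identify the seven templates with the ``irreducible'' nonzero cases of $U_{g',n'}$ for small $3g'+2n'$, as computed in \S\ref{sec:low excess15}.

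The hard part --- the technical heart of the argument --- is showing that these cycles are linearly independent in $H^*(\mathsf D^\bullet)$, i.e., that \eqref{eq:inj} is injective. The $\mathrm{val}(v_0)$-and-core filtration reduces this to two things. First, nonvanishing on the associated graded: this is the K\"unneth statement above, once one knows the pendant $\mathsf G^{(h_i)}$'s genuinely split off, for which the \cite{CGP21} bridge-acyclicity is needed to make the core/pendant decomposition canonical so that the template pieces are honest direct summands of the relevant associated-graded complex. Second, one must rule out that some differential of the spectral sequence --- necessarily one lowering $\mathrm{val}(v_0)$ or enlarging the core --- maps onto a template class; this requires the explicit rational-tail restriction maps on $K^{15}_n$ encoded in \eqref{11rels}, and is the step I expect to be the most delicate, since a priori a more degenerate graph could expand onto a template. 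Finally, one has to reconcile the orientation and automorphism signs of $\mathsf D^\bullet$ with the Koszul signs of $\Sym^r\bigl(\bigoplus_h W_0H^*_c(\M_h)\bigr)$, so that odd-degree weight-$0$ classes enter exterior-wise and the action of the relations \eqref{11rels} under permuting the $r$ slots reproduces the claimed symmetric-power structure with the shifts in \eqref{eq:inj} --- this is also what forces the defect-$1$ template into the degree shifted by $29$. Once these are in place, identifying the image with the left-hand side of \eqref{eq:inj} is routine.
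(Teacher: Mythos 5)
Your overall strategy is recognizably the one the paper uses: replace $U_g$ by a graph complex with a single special genus-one vertex carrying the $\s_{16}$-decoration (the paper's $\bGK^{15,0}_{g}$, accessed via the quasi-isomorphic truncated complexes $B^{15}_g$ and $C^{15}_g$), build candidate classes by hanging weight-zero graph cocycles $\gamma_i\in\GC_0$ off that vertex, and prove nontriviality by a leading-term argument. However, two steps that you treat as routine are in fact the substance of the proof, and as stated one of them is wrong. First, your claim that each of the seven template families can be made into an ``honest cycle'' of the complex simply by choosing the decoration $k_P$ so that every expansion of $v_0$ annihilates it is not correct. In the paper's construction the leading terms are cocycles only for the vertex-splitting part $\delta_s$ of the differential; the piece $\delta_\omega$ (which demotes a marked half-edge, i.e.\ changes the decoration) does not vanish on them, and one must explicitly construct lower-order correction terms --- the $x_{13},x_{12},\dots$ in Theorem~\ref{thm:emb1} and the cochain $y_{15}$ in Theorem~\ref{thm:emb 2} --- to obtain genuine cocycles. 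This correction is a nontrivial combinatorial construction imported from \cite{PayneWillwacher24} and cannot be absorbed into a choice of $k_P$.

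Second, the injectivity, which you correctly flag as the hard part, is left open in your proposal: you set up a valence-and-core filtration and say one must ``rule out that some differential maps onto a template class,'' but that is precisely the statement to be proved. The paper's mechanism is different and concrete: the auxiliary complex $X_{g,n}$ is acyclic, so the connecting homomorphism gives $H(B^{15}_g)\cong H(C^{15}_g)$, and a cocycle $x=x_0+\cdots+x_{14}\in C^{15}_g$ is non-exact as soon as its top $\omega$-part $x_{14}$ is non-exact for $\delta_s$; the non-exactness of the leading terms is then exactly the injectivity result of Turchin and Willwacher (not merely the bridge reductions of \cite{CGP21}), and the classes coming from $B^{15}_g$ are tested by pushing them into $C^{15}_g$ via $\delta_\omega$. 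One must also check that the images of the seven summands are mutually linearly independent, as in \cite[Section~6.4]{PayneWillwacher24}. A smaller but real discrepancy: your explanation of the degree shift $29$ for the $\V^{14,k-29}_{g-1}$ summand (inserting $v_0$ as a bivalent genus-one vertex on an edge) does not match the actual bookkeeping, in which the special vertex contributes cohomological degree $15$ and the $14$ connecting edges contribute $14$, with the $\V^{14,k-30}_{g-2}$ summand obtained by adding one further $\epsilon$--$\epsilon$ edge.
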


  \noindent The injection we construct comes from a subcomplex of a graph complex that computes $U_g$; see \S\ref{sec:inj} for details. The map \eqref{eq:inj} is not surjective; for example, $U_{11}$ is not in the image.

Theorem~\ref{thm:emb intro} yields many new nonvanishing results and lower bounds on the dimensions of cohomology groups of $\M_g$.  For instance, the exponential growth of $\dim H^{2g+20}_c(\M_g)$, which was not previously known, is deduced from the summand $\V^{10,k-30}_{g-5}$ using the exponential growth of $\dim W_0 H^{2g}_c(\M_g)$. We similarly prove exponential growth of $\dim H^{2g+k}_c(\M_g)$ for 20 other values of $k$ for which this was not previously known; see Corollary~\ref{cor:exp growth}. 

The following picture illustrates the values of $k$ for which $\dim_\Q H^{2g+k}_c(\M_g)$ is now known to grow at least exponentially with $k$, with dark gray boxes for the previously known cases (from weight 0, 2 and 11) and light gray boxes for the new contributions from $U_g$.

\[
\resizebox{.95\hsize}{!}{
\begin{tikzpicture}
  \begin{scope}[yshift=.2cm,scale=.2]
    \foreach \x in {0, 1, 2, 3, 4, 5, 6, 7, 9, 10, 13, 20, 51, 54, 55, 56, 57, 58, 59, 60, 61, 62, 63, 64, 65, 66, 67, 68, 69, 70,71,72,73,74,75,76,77,78,79,80}
    \node[minimum size=2mm, draw=black, fill=white, inner sep=0] at (\x,0) {};
  \end{scope}
    \begin{scope}[yshift=.2cm, scale=.2]
        \foreach \x in {8, 11, 12, 15, 16, 18, 19, 20, 21, 22, 23, 24, 25, 26, 27, 28, 29, 30, 31, 32, 33, 34, 35, 36, 37, 38, 39, 40, 41, 42, 43, 44, 45, 46, 47, 48, 49, 50, 51, 52, 53, 54, 55, 56, 57, 58, 59, 60, 61, 62, 63, 64, 65, 66, 67, 68, 69, 70, 72, 73}
        \node[minimum size=2mm, draw=black, fill=black!20, inner sep=0] at (\x,0) {};
        \end{scope}
  \begin{scope}[yshift=.2cm, scale=.2]
    \foreach \x in {8, 11, 12, 14, 15, 16, 17, 18, 19, 21, 22, 23, 24, 25, 26, 27, 28, 29, 30, 31, 32, 33, 34, 35, 36, 37, 38, 39, 40, 41, 42, 43, 44, 45, 46, 47, 48, 49, 50, 52, 53}
    \node[minimum size=2mm, draw=black, fill=black!50, inner sep=0] at (\x,0) {};
    \end{scope}
    \begin{scope}[yshift=.2cm, scale=.2]
      \foreach \x in {2, 3, 5, 6, 8, 9, 10, 12, 13}
      \node[minimum size=2mm, draw=black, fill=black!50, inner sep=0] at (\x,0) {};
      \end{scope}
  \begin{scope}[yshift=.2cm, scale=.2]
    \foreach \x in {0,3}
    \node[minimum size=2mm, draw=black, fill=black!50, inner sep=0] at (\x,0) {};
    \end{scope}
\begin{scope}[scale=.2]
\foreach \x in {0,10,20,30,40,50,60, 70, 80}
\draw (\x,1) -- (\x,-.5) (\x,-1) node {\x};
\end{scope}
\end{tikzpicture}
}
\]

\noindent This provides further evidence for the conjecture that $\dim H^{2g+k}_c(\M_g)$ grows at least exponentially with $g$ for all but finitely many non-negative integers $k$ \cite[Conjecture~1.3]{PayneWillwacher24}. The analogous statement for moduli spaces of principally polarized abelian varieties is a recent theorem: the dimension of $H^{2g+k}_c(\mathcal{A}_g)$ grows at least exponentially with $g$ for all but finitely many non-negative integers $k$ \cite[Corollary~1.5]{BCGP}.

Our study of the multiplicity of $\s_{16}$ in $\gr_{15}^W H^*_c(\M_{g,n})$ also leads naturally to a generating function for the corresponding Euler characteristic. See Theorem~\ref{thm:chi-S16}. The result is similar to the generating function for the weight $11$ Euler characteristic of $\M_{g,n}$ from \cite[Section~7]{PayneWillwacher24}.

\section{Graph complexes in weight 13} \label{sec:weight13}
In this section, we define and study graph complexes that compute the weight $13$ graded piece of $H^*_c(\M_{g,n})$. We start with the Getzler--Kapronov graph complex, obtained as the Feynman transform of the modular operad $H^{13}(\MM)$, and then pass to a simpler quasi-isomorphic graph complex whose generators we explicitly describe.
\subsection{Reminder of the Feynman transform}
\label{sec:feyn recollection}
Recall, e.g. from \cite[Section~2]{PayneWillwacher21}, that the associated graded of the weight filtration 
\[\gr^W_k H^*_c(\M_{g,n}) := W_k H^*_c(\M_{g,n})/W_{k-1}H^*_c(\M_{g,n})\]
is identified with the cohomology of the Feynman transform of $H(\MM)$, the modular cooperad whose $(g,n)$ part is $H^*(\MM_{g,n})$. 
This Feynman transform is also known as the Getzler--Kapranov graph complex and is denoted
\begin{equation}\label{equ:GKdef}
\GK_{g,n} 
\bigg( \bigoplus_{
\Gamma\text{ stable graph}} 
\Big( \bigotimes_{v\in V(\Gamma)}H^{*}(\MM_{g_v,n_v})\otimes \Q[-1]^{\otimes |E(\Gamma)|}\Big) \bigg) \Big/ \sim ,
\end{equation}
where the sum is over stable graphs of type $(g,n)$.  The relations in \eqref{equ:GKdef} are induced by isomorphisms of stable graphs; the end result is isomorphic to a direct sum over representatives of the finitely many isomorphism classes of stable graphs of type $(g,n)$, and the contribution of $[\Gamma]$ is the $\Aut(\Gamma)$-coinvariants of the parenthesized expression. The  differential is induced by pullback along the tautological clutching and gluing maps.

Each element of $\GK_{g,n}$ is represented by a linear combination of stable graphs with $n$ numbered legs, such that the sum of the genera $g_v$ of the vertices plus the first Betti number of the graph is $g$, and such that each vertex $v$ of valence $n_v$ is decorated by an element of $H^{*}(\MM_{g_v,n_v})$.  When each vertex is decorated by a homogeneous element of cohomological degree $k_v$, we say that the \emph{weight} of the decorated graph is $k = \sum_v k_v$.

Let $\GK_{g,n}^k$ denote the subspace spanned by generators of weight $k$.  The differential preserves these subspaces, giving a direct sum decomposition $\GK_{g,n} \cong \bigoplus_k \GK_{g,n}^k$.  By construction $\GK_{g,n}^k$ is isomorphic to the weight $k$ row of the $E_1$-page of the weight spectral sequence associated to the inclusion $\M_{g,n} \subset \MM_{g,n}$. 
  
The differential on $\GK^k_{g,n}$ has two terms,
\[
d = d_\vartheta + d_\xi.
\]
The piece $d_\xi$ acts by creating a self-loop at a vertex
\[
d_\xi \colon 
\begin{tikzpicture}
    \node[ext, label=90:{$x$}] (v) at (0,0) {$\scriptstyle g_v$};
    \draw (v) edge +(-.5,-.5) edge +(.5,-.5) edge +(0,-.5)
    edge +(-.25,-.5) edge +(.25,-.5) ;
\end{tikzpicture}
\to 
\begin{tikzpicture}
    \node[ext, label=90:{$\scriptstyle \xi^* x$}] (v) at (0,0) {$\scriptscriptstyle g_v-1$};
    \draw (v) edge +(-.5,-.5) edge +(.5,-.5) edge +(0,-.5)
    edge +(-.25,-.5) edge +(.25,-.5) edge[loop] (v);
\end{tikzpicture}.
\]
The decoration at the new vertex, which has a genus $g_v-1$ and valence $n_v+2$, is obtained from the decoration $x\in H^{k_v}(\MM_{g_v,n_v})$ via pullback along the morphism $\xi\colon \MM_{g_v-1,n_v+2} \to \MM_{g_v,n_v}$.

Similarly, the piece $d_\vartheta$ acts as the sum over all vertices of all possible ways of splitting a vertex into two vertices joined by an edge, 
\[
d_\vartheta \colon 
\begin{tikzpicture}
    \node[ext, label=90:{$x$}] (v) at (0,0) {$\scriptstyle g_v$};
    \draw (v) edge +(-.5,-.5) edge +(.5,-.5) edge +(0,-.5)
    edge +(-.25,-.5) edge +(.25,-.5) ;
\end{tikzpicture}
\to 
\sum
\begin{tikzpicture}
    \node[ext, label=90:{$x'$}] (v) at (0,0) {$\scriptstyle h'$};
    \node[ext, label=90:{$x''$}] (w) at (.7,0) {$\scriptstyle h''$};
    \draw (v) edge +(-.5,-.5) edge +(-.25,-.5) edge (w)
    (w) edge +(.5,-.5) edge +(0,-.5)
     edge +(.25,-.5) ;
\end{tikzpicture}
\]
so that the new decoration is the pullback 
\[
\sum x' \otimes x'' = \vartheta^* x
\]
of $x\in H^{k_v}(\MM_{g_v,n_v})$ under the appropriate gluing morphism 
$\vartheta :\MM_{h',n'}\times \MM_{h'',n''} \to\MM_{g_v,n_v}$.
Here $h'+h''=g_v$ and $n'+n''=n_v+2$. 
We are interested in the case of weight $k=13$. Since $H^{k'}(\MM_{g,n})=0$ for odd $k'<11$ \cite{BergstromFaberPayne}, generators of $\GK_{g,n}^{13}$ are of the following two forms:

\begin{enumerate}
    \item[A.] Graphs with one vertex $v$ of weight $k_v=13$;
    \item[B.] Graphs with one vertex $v$ of weight $k_v=11$ and one vertex $u$ of weight $k_u=2$.
\end{enumerate}
   In either case, all other vertices have weight zero.  
   We also note that a weight 11 vertex must have genus $g_v=1$ and valence $n_v\geq 11$, by  \cite[Theorem 1.1]{CanningLarsonPayne}.
   Similarly, a weight $13$ vertex $w$ must have genus $g_w \geq 1$ and valence $n_w \geq 10$ by \cite[Theorem~1.7]{CLPW}.

\subsection{Simplifying \texorpdfstring{$\GK^{13}_{g,n}$}{GK13gn}}
Next, we simplify the graph complex $\GK^{13}_{g,n}$, following the analogous construction in weight 2 \cite{PayneWillwacher21}.
We define the graded subspace 
\[
\tilde I_{g,n} \subset \GK^{13}_{g,n}
\]
to be the subspace spanned by decorated graphs with one or more of the following features:
\begin{enumerate}
\item A weight $0$ vertex of positive genus.
\item A weight $2$ vertex $2$ of genus $g_u\geq 2$.
\item A weight $2$ vertex of genus $g_v= 1$ and valence $n_v\geq 2$, or of genus $g_v=1$ and valence $n_v=1$ such that the unique neighbor is a vertex of weight 0.
\item A weight 13 vertex $w$ of genus $g_w\geq 2$.
\end{enumerate}
We also recall that a weight 11 vertex must necessarily have genus $1$ for the graph to be a non-zero element of $\GK_{g,n}^{13}$.
We then define 
\[
I_{g,n} := \tilde I_{g,n} + d\tilde I_{g,n}\subset \GK^{13}_{g,n}
\]
to be the differential closure of $\tilde I_{g,n}$, and we define the quotient complex 
\[
\bGK^{13}_{g,n}:= \GK^{13}_{g,n} / I_{g,n}.
\]
The goal is to show the following proposition, to be proven in Section \ref{sec:prop bGK proof} below.

\begin{prop}\label{prop:bGK}
    The projection $
    \GK^{13}_{g,n} \to \bGK^{13}_{g,n}
    $ is a quasi-isomorphism.
\end{prop}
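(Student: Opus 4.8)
The plan is to follow the template of the weight-two computation in \cite{PayneWillwacher21}. First, observe that $I_{g,n} = \tilde I_{g,n} + d\tilde I_{g,n}$ is a genuine subcomplex of $\GK^{13}_{g,n}$, so there is a short exact sequence of complexes $0 \to I_{g,n} \to \GK^{13}_{g,n} \to \bGK^{13}_{g,n} \to 0$; by the associated long exact sequence, Proposition~\ref{prop:bGK} is equivalent to the assertion that $I_{g,n}$ is acyclic. Since $\GK^{13}_{g,n}$ is finite dimensional, there are no convergence issues for the filtration arguments that follow, and the whole proof reduces to showing $H^*(I_{g,n}) = 0$.

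To prove acyclicity I would eliminate the four defining features of $\tilde I_{g,n}$ one at a time, by endowing $I_{g,n}$ with a filtration whose associated graded, for each feature, splits off an auxiliary subquotient complex carrying only one elementary summand of the differential: either the self-loop creation summand of $d_\xi$ (which decreases the genus of a vertex by one and applies $\xi^*$ to its decoration) or a summand of $d_\vartheta$ that splits off a low-complexity vertex. Each such auxiliary complex is then shown acyclic by an explicit contracting homotopy built from the inverse operation --- contracting a self-loop, or reabsorbing the split-off vertex --- using the relevant structural input. For feature~(1) this input is the acyclicity of the positive-genus part of the weight-zero Getzler--Kapranov complex, i.e.\ the reduction to genus-zero, loop-free graphs underlying \cite{CGP21, CGP22} and \cite{PayneWillwacher21}, in its marked-point form. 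For features~(2) and~(3) it is the tautological description of $H^2(\MM_{g_u,n_u})$: for $g_u \geq 2$, and for genus one with valence $\geq 2$, the relevant classes are combinations of boundary divisor classes $\delta_{\mathrm{irr}}$ and $\delta_{h',h''}$ (and on $\MM_{1,1}$ one has $\lambda = \delta_{\mathrm{irr}}/12$), so decorating a vertex by such a class is redundant with the graph-level boundary structure and is absorbed by the self-loop/separating-edge homotopy --- this is exactly the weight-two cancellation of \cite{PayneWillwacher21}. For feature~(4) it is the presentation of $H^{13}(\MM_{g,n})$ from \cite[Section~4]{CLPW}: for $g \geq 2$ this cohomology stabilizes and is generated by classes pulled back from the boundary along the gluing maps $\xi$ and $\vartheta$, so the same absorption mechanism trades a weight-$13$ decoration on a genus-$\geq 2$ vertex for decorated graphs in which that decoration has been redistributed onto vertices of strictly smaller genus; iterating pushes every weight-$13$ vertex down to genus one (genus zero being excluded since $H^{13}(\MM_{0,m}) = 0$).

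The bookkeeping amounts to choosing the filtration --- by an integer combination of the genus accumulated on weight-zero vertices, the genus of the weight-$13$ vertex, and the genus and valence of the weight-$2$ vertex --- so that on each page the leading term of $d$ is precisely the feature-removing operation, and so that the homotopy used at one stage does not recreate a feature removed at an earlier stage; this dictates treating the weight-$13$ and weight-$2$ reductions before (or interleaved with) the weight-zero reduction, since, for instance, $d_\vartheta$ applied to a weight-$13$ vertex can produce a weight-zero vertex of positive genus. I expect feature~(4) to be the main obstacle: one must extract from \cite[Section~4]{CLPW} not merely that boundary pullbacks span $H^{13}(\MM_{g,n})$ for $g \geq 2$, but enough of the explicit relations to see that the redistribution of a weight-$13$ decoration is well defined and strictly genus-decreasing, and that the resulting homotopy is compatible with the full differential $d = d_\vartheta + d_\xi$. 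This step has no direct precedent in the weight-two prototype, where the corresponding high-genus $H^2$ classes are visibly the $\delta$'s.
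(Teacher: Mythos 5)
Your reduction to the acyclicity of $I_{g,n}$ is sound, and the overall plan (filter so that the leading differential is an elementary operation, then contract) is in the spirit of what the paper does: the paper filters $\GK^{13}_{g,n}$ by the number of vertices, so that the associated graded differential is $d_\xi$, identifies $(\GK^{13}_{g,n},d_\xi)$ with a sum of coinvariants of tensor products of one-vertex complexes $V^{k}_{g_v,n_v}$, and compares the resulting $E_1$-page with $\bGK^{13}_{g,n}$. The genuine gap is exactly where you flag it: feature (4). In the paper this is the computation of $H^{13+h}(V^{13}_{g,n})$ for the column $H^{13}(\MM_{g,n})\xrightarrow{\xi^*}H^{13}(\MM_{g-1,n+2})_{\ss_2}\xrightarrow{\xi^*}\cdots$, which is carried out by exhibiting an explicit basis $Z_{B\subset A,\underline h}$ of $H^{12,1}(\MM_{g-h,n+2h})_{\ss_2\wr\ss_h}$, computing $\xi^*$ on it via \cite[Lemma~4.3]{CLPW}, and checking that basis elements cancel in pairs except in top degree. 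Your proposed ``contracting homotopy built from the inverse operation'' cannot exist in the naive sense here, because this column is not acyclic: its cohomology is nonzero (concentrated in top degree, equal to $H^{13}(\MM_{1,n+2h})_{\ss_2\wr\ss_h}/J$), and the pair-cancellation depends on the genus-one relations of \cite[Lemma~4.6]{CLPW} (the factor $\pm 2$ in \eqref{equ:Z tadpoles}) and on $\psi=\tfrac1{12}\delta_{irr}$ in $H^2(\MM_{1,1})$ --- which is precisely why the differential closure $I_{g,n}$ acquires the extra generators $(7')$ and $(7'')$ of Lemma~\ref{lem:Ign 2}, including the $\tfrac1{12}$ correction term. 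Without this explicit bookkeeping one cannot conclude that the feature-(4) part contributes nothing beyond what $\bGK^{13}_{g,n}$ retains; note also that your description of $H^{13}(\MM_{g,n})$ for $g\geq 2$ as generated by classes \emph{pulled back} from the boundary is backwards --- the generators $Z_{B\subset A}$ are boundary \emph{pushforwards}, and what must be computed is how the pullbacks $\xi^*$ act on them.

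A second, smaller gap: your claim that weight-$2$ decorations in features (2)--(3) are simply ``absorbed by the self-loop/separating-edge homotopy'' proves too much. By \eqref{equ:V2 cohom}, the one-vertex complex $V^2_{1,n}$ has surviving cohomology $\Q\,\delta_{irr}$ in degree $2$ for every $n\geq 1$, so a homotopy killing all genus-one weight-$2$ decorations would contradict the fact that $\bGK^{13}_{g,n}$ must retain the univalent $\delta_{irr}$-vertex attached to the weight-$11$ vertex. The paper reconciles this with a second, nested spectral sequence whose leading differential splits off the weight-$2$ vertex; your outline would need an analogous step, set up so as not to destroy the surviving configuration.
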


For now, we give a more explicit description of $I_{g,n}$.
\begin{lem}\label{lem:Ign gen}
The subspace $I_{g,n}\subset \GK^{13}_{g,n}$ is spanned by decorated graphs possessing at least one of the features (1)--(4) above, together with the following graphs:
\begin{enumerate}
\setcounter{enumi}{4}
\item Graphs with a loop attached to a weight 0 vertex.
\[
\begin{tikzpicture}
\node[int] (v) at (0,0) {};
\draw (v) edge[loop] (v) edge +(-.3,-.3) edge +(0,-.3) edge +(.3,-.3);
\end{tikzpicture}
\]
\item Graphs with a loop attached to a weight 2 vertex $u$ of genus $g_u=0$ and decoration in the image of the pullback under the corresponding gluing map  $\xi\colon \MM_{0,n} \to \MM_{1,n_u-2}$. 
\item Linear combinations $(d_\xi+d_\vartheta)\Gamma$, where $\Gamma$ has a weight 13 vertex of genus $2$. 
\end{enumerate}
\end{lem}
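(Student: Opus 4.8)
The plan is to prove the asserted equality as two inclusions. By definition $\tilde I_{g,n}$ is spanned by the decorated graphs having a feature among (1)--(4), and $I_{g,n}=\tilde I_{g,n}+d\tilde I_{g,n}$, so it suffices to check (a) that every graph of type (5), (6), or (7) lies in $I_{g,n}$, and (b) that $d\Gamma$ lies in the claimed span whenever $\Gamma$ is a decorated graph with a feature among (1)--(4). The two inputs I would use throughout are the vanishing $H^{k'}(\MM_{h,m})=0$ for odd $k'<11$ \cite{BergstromFaberPayne} -- so that $\vartheta^*$ of a weight-$2$ class splits with cohomological bidegree $(2,0)$ or $(0,2)$, and $\vartheta^*$ of a weight-$13$ class with bidegree $(13,0)$, $(0,13)$, $(11,2)$, or $(2,11)$ -- and the fact that a weight-$11$ vertex must have genus $1$ for the graph to be nonzero \cite{CanningLarsonPayne}.

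For (a): a graph of type (7) is literally $d\Gamma=(d_\xi+d_\vartheta)\Gamma$ for some $\Gamma$ with a weight-$13$ genus-$2$ vertex, hence with feature (4), so it lies in $d\tilde I_{g,n}\subset I_{g,n}$. For a type-(5) graph $\Gamma'$, with its self-loop at a weight-$0$ genus-$0$ vertex $v$ of valence $n_v$ (so $n_v\geq 3$), I would let $\Gamma_0$ be obtained by deleting the loop and promoting $v$ to a weight-$0$ genus-$1$ vertex of valence $n_v-2$: then $\Gamma_0$ has feature (1), $d_\xi$ acting at that vertex reproduces $\pm\Gamma'$ (the fundamental class pulls back to the fundamental class), and every other term of $d\Gamma_0$ still has a positive-genus weight-$0$ vertex, so $\Gamma'\in\tilde I_{g,n}+d\tilde I_{g,n}=I_{g,n}$. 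For a type-(6) graph $\Gamma'$, with its self-loop at a weight-$2$ genus-$0$ vertex $u$ of valence $n_u$ decorated by $\xi^* y$ with $y\in H^2(\MM_{1,n_u-2})$, I would take $\Gamma_0$ with the loop contracted, i.e.\ $u$ replaced by a weight-$2$ genus-$1$ vertex of valence $n_u-2$ decorated by $y$. If $n_u=3$ then $\xi^* y\in H^2(\MM_{0,3})=0$ and $\Gamma'=0$; otherwise $n_u-2\geq 2$, so $\Gamma_0$ has feature (3), $d_\xi$ at that vertex reproduces $\pm\Gamma'$, and the remaining terms of $d\Gamma_0$ each have a weight-$2$ genus-$1$ vertex still satisfying feature (3) (valence $\geq 2$, or valence $1$ with its unique neighbor the split-off weight-$0$ vertex) or a positive-genus weight-$0$ vertex. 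Hence $\Gamma'\in I_{g,n}$.

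For (b): given $\Gamma$ with a feature among (1)--(4), I would expand $d\Gamma=d_\xi\Gamma+d_\vartheta\Gamma$ and observe first that any term in which $d_\xi$ or $d_\vartheta$ acts away from the distinguished vertex still contains that vertex with its feature; so only the action at the distinguished vertex needs attention. If $\Gamma$ has feature (1): $d_\xi$ leaves a positive-genus weight-$0$ vertex unless the genus was $1$, in which case it produces a type-(5) graph; $d_\vartheta$ splits the weight-$0$ vertex into two weight-$0$ vertices, one of positive genus. If $\Gamma$ has feature (2): $d_\xi$ gives a weight-$2$ vertex of genus $\geq 1$, retaining feature (2) when the genus stays $\geq 2$ and producing feature (3) (with valence $\geq 2$) when it drops to $1$; $d_\vartheta$ gives either a positive-genus weight-$0$ vertex (feature (1)) or a weight-$2$ vertex of genus $\geq 2$ (feature (2)). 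If $\Gamma$ has feature (3): $d_\xi$ produces a self-loop at a weight-$2$ genus-$0$ vertex decorated by the appropriate $\xi$-pullback, i.e.\ a type-(6) graph (or $0$, if that vertex had valence $1$); $d_\vartheta$ gives a positive-genus weight-$0$ vertex (feature (1)) or a weight-$2$ genus-$1$ vertex that has valence $\geq 2$ or has valence $1$ with its unique neighbor the split-off weight-$0$ vertex (feature (3)). If $\Gamma$ has feature (4), with weight-$13$ vertex $w$ of genus $g_w$: when $g_w=2$ the entire expression $d\Gamma=(d_\xi+d_\vartheta)\Gamma$ is a graph of type (7); when $g_w\geq 3$, $d_\xi$ leaves a weight-$13$ vertex of genus $\geq 2$ (feature (4)), and each $d_\vartheta$ term has, in the $(13,0)$ and $(0,13)$ cases, a weight-$13$ vertex of genus $\geq 2$ (feature (4)) or a positive-genus weight-$0$ vertex (feature (1)), and in the $(11,2)$ and $(2,11)$ cases -- where the weight-$11$ vertex is forced to have genus $1$ -- a weight-$2$ vertex of genus $g_w-1\geq 2$ (feature (2)). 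This covers all cases.

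The step I expect to be the main obstacle is the bookkeeping in (b): verifying that no term of a $d_\vartheta$-splitting escapes the list. The subtle spots are feature (3), where a split can strip all legs off the surviving genus-$1$ weight-$2$ vertex, leaving it of valence $1$ -- this is precisely why feature (3) must allow valence-$1$ vertices with a weight-$0$ neighbor -- and feature (4), where one genuinely uses the genus-$1$ constraint on weight-$11$ vertices from \cite{CanningLarsonPayne} to handle the $(11,2)$ and $(2,11)$ splittings. Relatedly, the terms of $d\Gamma$ for $\Gamma$ with a weight-$13$ genus-$2$ vertex include weight-$2$ genus-$1$ vertices adjacent to a weight-$11$ vertex, which fall outside features (1)--(4) and types (5)--(6); this is why type (7) must be stated as a full linear combination rather than as a list of individual graphs.
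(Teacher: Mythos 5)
Your proposal is correct and follows essentially the same route as the paper's proof: the same feature-by-feature analysis of $d\Gamma$, the same converse step of contracting the self-loop (increasing the genus by one) to realize type (5) and type (6) graphs as terms of $d\Gamma$ for $\Gamma$ with features (1) and (3) respectively, the same use of $H^2(\MM_{0,3})=0$ to guarantee the valence condition in feature (3), and the same observation that type (7) elements are literally $d\Gamma$ for $\Gamma$ with a genus-$2$ weight-$13$ vertex. Your more explicit bookkeeping for feature (4) via the Hodge-type splitting of $\vartheta^*$ and the genus-$1$ constraint on weight-$11$ vertices is a correct elaboration of what the paper states tersely, and your observation that the $d_\vartheta$-terms arising from a feature (3) vertex retain feature (3) or acquire feature (1) is accurate (the paper's wording ``feature (2) or (1)'' at that point appears to be a typo for ``(3) or (1)'').
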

\begin{proof}
    The subspace $I_{g,n}$ is evidently generated by graphs having one the features (1)--(4) above, and elements $d\Gamma$, where $\Gamma$ is again such a graph.
    
    First suppose that $\Gamma$ has feature (1), i.e., a weight zero vertex $w$ of genus $g_w\geq 1$.
   If $g_w > 1$, then all terms in $d\Gamma$ also have this feature. If $g_w = 1$, then all terms in $d\Gamma$ have this feature, except for one arising from $d_\xi \Gamma$, that corresponds to creating a self-loop at $w$ and creates feature (5). Thus $d\Gamma$ lies in the span of graphs with features (1) and (5). Conversely, given any graph $\Gamma'$ with feature (5), let $\Gamma$ be
   be the graph that contracts the self-loop and replaces it with a genus $1$ vertex shows. Then $\Gamma'$ is a term in $d\Gamma$ and all other terms have already been shown to lie in $I_{g,n}$, so $\Gamma'$ is also in $I_{g,n}$.
    
    Next suppose $\Gamma$ has feature (2), i.e., a weight 2 vertex $u$ of genus $g_u\geq 2$. Then any term in $d\Gamma$ has a weight 2 vertex of genus $\geq 2$, a weight zero vertex of genus $\geq 1$, or a weight 2 vertex of genus 1 with a self-loop, and hence necessarily of valence $\geq 3$. (Note that one can never have a $(2,0)$ vertex because $H^{13}(\Mb_{2,0}) = 0$.) Hence $d\Gamma\in \tilde I_{g,n}$.

    Next suppose $\Gamma$ has feature (3), i.e., a weight 2 vertex $u$ of genus 1 as stated.
    Then $d_\vartheta\Gamma$ is a linear combination of terms that again have feature (2) or (1), so that $d_\vartheta \Gamma\in \tilde I_{g,n}$. Likewise, the only term in $d_\xi\Gamma$ that is not readily in $\tilde I_{g,n}$ is obtained by creating the self-loop at $u$, giving rise to the graphs having feature (6). Thus, $d\Gamma$ lies in the span of graphs with features $(1), (2)$, and $(6)$. Conversely, given a graph $\Gamma'$ with feature (6), let $\Gamma$ be the graph that contracts the self-loop at $u$ and decorates the resulting genus $1$ vertex with the class guaranteed to pullback to the decoration at $u$. Because $H^2(\Mb_{0,3}) = 0$, the vertex $u$ must have valence at least $4$; this ensures that the graph $\Gamma$ has feature (3). By construction, $\Gamma'$ is a term in $d\Gamma$ and all other terms have already been shown to lie in $I_{g,n}$, so $\Gamma'$ lies in $I_{g,n}$.
    
    Finally, if $\Gamma$ has feature (4) and $g_v \geq 3$, then $d \Gamma$ is again a linear combination of graphs having features (2) and (4). If $g_v = 2$, then $d \Gamma$ is of type (7).
\end{proof}

\subsection{The part of type (12,1)} \label{pics}
Each term in the Getzler--Kapranov graph complex comes with an associated $\qq$-Hodge structure, and the differentials respect the Hodge decomposition after tensoring with $\CC$. Hence, we have a Hodge decomposition 
\[
\mathsf{GK}_{g,n}^{k}\otimes \cc = \bigoplus_{p+q=k} \mathsf{GK}_{g,n}^{p,q}.
\]
Specializing to $k=13$, we have
\[
\GK^{13}_{g,n}\otimes \cc = \GK^{12,1}_{g,n}\oplus \GK^{1,12}_{g,n},
\]
because the rank 2 Hodge structure $\lstw$ has  types $(12,1)$ and $(1,12)$. 
Similarly,
\begin{align*}
I_{g,n}\otimes \cc &= I_{g,n}^{12,1}\oplus I_{g,n}^{1,12}
&
\bGK^{13}_{g,n}\otimes \cc &= \bGK^{12,1}_{g,n}\oplus \bGK^{1,12}_{g,n},
\end{align*}
with $\bGK^{12,1}_{g,n}=\GK^{12,1}_{g,n}/I_{g,n}^{12,1}$ and $\bGK_{g,n}^{1,12}=\GK_{g,n}^{1,12}/I_{g,n}^{1,12}$.

We introduce graphical notation for generators of $\bGK^{12,1}_{g,n}$, following \cite{PayneWillwacher21, PayneWillwacher24}.
Generators are decorated graphs, and graphs having features (1)--(7) are set to zero. Because of (1), we may assume that all weight 0 vertices are decorated by $1\in H^0(\MM_{0,n})$, omit the decoration and genus, and just draw a black vertex.
\[
\begin{tikzpicture}
    \node[int] (v) at (0,0) {};
    \draw (v) edge +(-.3,-.3) edge +(0,-.3) edge +(.3,-.3)              edge +(-.3,.3) edge +(0,.3) edge +(.3,.3);
\end{tikzpicture}
\]
Similarly, because of (5), we may assume that there are no loops attached to weight zero vertices. 
Because of (2), we may assume that all weight $2$ vertices have genus $\leq 1$.
The decorations $\delta_S$ on a weight two vertex of genus zero can be graphically represented by drawing the vertex as two vertices, connected by a special edge 
\begin{equation}
\label{equ:delta crossed}
\begin{tikzpicture}
    \node[int, label=90:{$\delta_S$}] (v) at (0,0) {};
    \draw (v) edge +(-.3,-.3) edge +(-.3,.3) edge +(-.3,0)              edge +(.3,-.3) edge +(.3,0) edge +(.3,.3);
\end{tikzpicture}
=:
\begin{tikzpicture}
    \node[int] (v) at (0,0) {};
    \node[int] (w) at (0.5,0) {};
    \draw (v) edge +(-.3,-.3) edge +(-.3,.3) edge +(-.3,0) edge[crossed] (w) 
    (w) edge +(.3,-.3) edge +(.3,0) edge +(.3,.3);
\end{tikzpicture}
\end{equation}

Because of (3), we may assume all weight $2$, genus $1$ vertices have valence $1$ and represent them by  a crossed self-edge 
\[
\begin{tikzpicture}
    \node[ext, label=90:{$\delta_{irr}$}] (v) at (0,0) {};
    \draw (v) edge +(0,-.3);
\end{tikzpicture}
=:
\begin{tikzpicture}
    \node[int] (v) at (0,0) {};
    \draw (v) edge +(0,-.3) edge[loop, crossed] (v);
\end{tikzpicture}
\]
We draw the generator $\omega_A$ at the weight $11$ vertex (of genus 1) as
\[
\begin{tikzpicture}
    \node[ext] (v) at (0,0) {};
    \node at (.5, .1) {$\scriptscriptstyle \vdots$};
    \draw (v) edge +(-.5,-.5) edge +(-.5,0) edge +(-.5,+.5) 
    edge[->-] +(.5,+.5) edge[->-] +(.5,+.3) edge[->-] +(.5,-.3) edge[->-] +(.5,-.5)
    ;
    \draw [decorate,decoration={brace,amplitude=5pt}]
  (.8,.5) -- (.8,-.5) node[midway,xshift=1em]{$A$};
\end{tikzpicture}
\]

For the weight $13$ vertex of genus $1$, we recall the presentation of $H^{12,1}(\Mb_{1,n})$ in \cite{CLPW}. 
Consider the parametrized boundary divisors of $\Mb_{1,n}$ of the form
\[\iota_{A} \colon \Mb_{1, A \cup p} \times \Mb_{0,A^c \cup p'} \to \Mb_{1,n}.\]
For each such boundary divisor, there is a push forward map
\[
\iota_{A*} \colon H^{11,0}(\Mb_{1,A\cup p}) \otimes H^0(\Mb_{g-1,A^c \cup p'}) \subset H^{11,0}(\Mb_{1, A \cup p} \times \Mb_{0,A^c \cup p'}) \to H^{12,1}(\Mb_{1,n}). 
\]
We consider classes of the form
\begin{equation} \label{zdef} 
Z_{B \subset A} := \iota_{A*}(\omega_{B \cup p}\otimes 1),
\end{equation}
where $B \subset A \subset \{1, \ldots, n\}$ with $B$ increasing of size $10$. Here, $\omega_{B\cup p}$ is the pullback under the forgetful map $\Mb_{1,n}\rightarrow \Mb_{1,B\cup p}$ of the generator $\omega\in H^{11,0}(\Mb_{1,11})$ corresponding to the Ramanujan cusp form. The elements $Z_{B\subset A}$ generate $H^{12,1}(\Mb_{1,n})$, by \cite[Lemma 4.11]{CLPW}. We depict the weight 13 vertex (of genus 1) decorated by the generator $Z_{B\subset A}\in H^{12,1}(\MM_{1,n})$ by
\[
\begin{tikzpicture}
    \node[ext] (v) at (0,0) {};
    \node[int] (w) at (0.7,0) {};
    \draw (v) edge +(-.5,.5) edge +(-.5,.3) edge[->-] +(-.5,.1) edge[crossed] (w) 
    (w) edge +(.5,-.5) edge +(.5,0) edge +(.5,.5);
    \draw (v) edge[->-] +(-.5,-.3) edge[->-] +(-.5,-.5);
    \draw [decorate,decoration={brace,amplitude=5pt}]
  (-1.2,-.5) -- (-1.2,.5) node[midway,xshift=-1em]{$A$};
    \draw [decorate,decoration={brace,amplitude=5pt}]
  (-.7,-.5) -- (-.7,.1) node[midway,xshift=-.7em]{$\scriptstyle B$};
\end{tikzpicture}
\]

We use these drawings to make the relations (7) more precise.

\begin{lem}\label{lem:Ign 2} 
The graded subspace $I_{g,n}^{12,1}$ is spanned by graphs $\Gamma\in \GK_{g,n}^{12,1}$ that satisfy either of the conditions (1)--(6) above, and in addition by the following elements:
\begin{enumerate}
    \item[($7'$)] Graphs with a weight 13 vertex of genus 1 with a self-edge $(s,t)$, decorated by $Z_{B\subset A}$ such that $|A^c|\geq 3$ and $s,t\in A^c$.
\[
\begin{tikzpicture}
    \node[ext] (v) at (0,0) {};
    \node[int] (w) at (0.7,0) {};
    \draw (v) edge +(-.5,.5) edge +(-.5,.3) edge[->-] +(-.5,.1) edge[crossed] (w) 
    (w) edge +(.5,-.5) edge +(.5,0) edge[loop] (w);
    \draw (v) edge[->-] +(-.5,-.3) edge[->-] +(-.5,-.5);
    \draw [decorate,decoration={brace,amplitude=5pt}]
  (-1.2,-.5) -- (-1.2,.5) node[midway,xshift=-1em]{$A$};
    \draw [decorate,decoration={brace,amplitude=5pt}]
  (-.7,-.5) -- (-.7,.1) node[midway,xshift=-.7em]{$\scriptstyle B$};
\end{tikzpicture}
\]
    \item[($7''$)] Linear combinations $\Gamma-\frac{1}{12}\Gamma'$ where $\Gamma$ is a graph as in (7$'$) but with $A^c=\{s,t\}$, and $\Gamma'$ is the same graph as $\Gamma$ except that the weight 13 vertex is replaced by a weight 11 vertex and a $\delta_{irr}$-decorated weight 2 vertex:
    \[
\begin{tikzpicture}
    \node[ext] (v) at (0,0) {};
    \node[int] (w) at (0.7,0) {};
    \draw (v) edge +(-.5,.5) edge +(-.5,.3) edge[->-] +(-.5,.1) edge[crossed] (w) 
    (w)  edge[loop] (w);
    \draw (v) edge[->-] +(-.5,-.3) edge[->-] +(-.5,-.5);
\end{tikzpicture}
-
\frac1{12}
\begin{tikzpicture}
    \node[ext] (v) at (0,0) {};
    \node[int] (w) at (0.7,0) {};
    \draw (v) edge +(-.5,.5) edge +(-.5,.3) edge[->-] +(-.5,.1) edge[->-] (w) 
    (w) edge[loop, crossed] (w);
    \draw (v) edge[->-] +(-.5,-.3) edge[->-] +(-.5,-.5);
\end{tikzpicture}
    \]
\end{enumerate}
\end{lem}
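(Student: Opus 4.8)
The plan is to establish both inclusions by a term-by-term analysis of $(d_\xi+d_\vartheta)\Gamma$ for a decorated graph $\Gamma$ carrying a weight-$13$ vertex $w$ of genus $2$, since by Lemma~\ref{lem:Ign gen}(7) these are exactly the generators of $I_{g,n}$ not already of types (1)--(6). As the full weight $13$ sits at $w$, the graph $\Gamma$ has no weight-$2$ vertex and no second weight-$13$ vertex, so its only possible other feature is a weight-zero vertex of positive genus; and if that occurs, one checks directly that every term of $(d_\xi+d_\vartheta)\Gamma$ still carries a feature from (1)--(6). Hence it suffices to treat $\Gamma$ whose only distinguished vertex is $w$, with all other vertices of genus zero and weight zero, decorated by $1$. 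Write $\eta\in H^{12,1}(\Mb_{2,n_w})$ for the decoration at $w$. The main external input is the stabilized presentation of $H^{13}(\Mb_{g,n})$ from \cite[Section~4]{CLPW}: it describes $H^{12,1}(\Mb_{2,n})$ by generators and relations coming from boundary pushforwards, and---via the self-intersection formula and the comparison of $\psi$-classes under forgetful maps---it lets one compute the pullback of $\eta$ along the irreducible gluing map $\iota_{irr}\colon\Mb_{1,n_w+2}\to\Mb_{2,n_w}$, which is precisely the map $\xi$ entering $d_\xi$ at $w$, as a linear combination of the classes $Z_{B\subset A}$ of \eqref{zdef} in which the two glued points $s,t$ always lie in the genus-zero index set $A^c$.

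Next I would carry out the reduction modulo graphs with features (1)--(6). In $d_\vartheta\Gamma$, splitting a weight-zero vertex leaves $w$ intact, so that term has feature (4); splitting $w$ itself in a $(0,2)$- or $(2,0)$-type partition again produces a genus-$2$ vertex carrying all of the weight, since the complementary genus-zero factor contributes in degree at most $2$ and $H^{11}(\Mb_{2,\bullet})=0$ (a weight-$11$ vertex has genus $1$, \cite[Theorem~1.1]{CanningLarsonPayne}), so those terms also have feature (4). In a $(1,1)$-type splitting $\vartheta\colon\Mb_{1,n'}\times\Mb_{1,n''}\to\Mb_{2,n_w}$, the vanishing $H^{k'}(\Mb_{1,\bullet})=0$ for odd $k'<11$ \cite{BergstromFaberPayne} forces $\vartheta^*\eta$ into bidegrees $(13,0)$, $(0,13)$, $(11,2)$, $(2,11)$; the first two give a positive-genus weight-zero vertex (feature (1)), and in the latter two the weight-$2$ genus-$1$ vertex has feature (3) unless it has valence $1$. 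So the only surviving contributions are the bidegree-$(11,2)$ and $(2,11)$ terms whose weight-$2$ vertex has valence $1$---i.e.\ a $\delta_{irr}$-decorated weight-$2$ vertex joined by an edge to the weight-$11$ vertex, which is precisely the graph $\Gamma'$ of ($7''$). Finally $d_\xi\Gamma$ contributes the self-loop at $w$: a genus-$1$ weight-$13$ vertex decorated by $\iota_{irr}^*\eta$, which by the previous paragraph is a combination of graphs of the shape appearing in ($7'$). Thus, modulo graphs with features (1)--(6), $(d_\xi+d_\vartheta)\Gamma$ is a combination of ($7'$)-type graphs and $\Gamma'$-type graphs.

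To identify which combinations actually occur I would compare coefficients using the explicit generators $\eta$. Those for which $\iota_{irr}^*\eta$ is supported on classes $Z_{B\subset A}$ with $|A^c|\geq 3$ contribute no $\Gamma'$-term, and, as $\eta$ and the distribution of half-edges at $w$ vary, sweep out exactly the generators ($7'$); those with $|A^c|=2$, necessarily $A^c=\{s,t\}$, contribute a single $Z_{B\subset A}$ self-loop graph $\Gamma$ together with a multiple of the corresponding $\Gamma'$. Since these two graphs describe the same geometric configuration---a smooth elliptic component carrying $\omega$ attached to a nodal rational component---the ratio of their decorations is governed by the self-intersection formula (which multiplies by $-\psi_s-\psi_t$) and the relation $\delta_{irr}=12\psi_1$ on $\Mb_{1,1}$, and tracking the normalizations yields the coefficient $\tfrac{1}{12}$, giving the element $\Gamma-\tfrac{1}{12}\Gamma'$ of ($7''$). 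Reading the same computations backwards realizes every ($7'$) and ($7''$) element as $(d_\xi+d_\vartheta)\Gamma$ modulo features, which is the reverse inclusion.

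I expect the main obstacle to be the intersection-theoretic bookkeeping behind the middle two paragraphs: writing $\iota_{irr}^*\eta$---the sum of $-(\psi_s+\psi_t)$ times the weight-$11$ part of $\eta$ and the excess terms from the $\delta_1$-boundary---in the $Z_{B\subset A}$ basis, checking that the glued points always land in $A^c$, and confirming that one sweeps out precisely the span of the ($7'$) graphs and nothing more; together with pinning down the constant $\tfrac{1}{12}$ through the $\psi$--$\delta_{irr}$ comparison on $\Mb_{1,1}$, and verifying that the $\delta_1$-contributions do not create spurious $|A^c|=2$ terms that would disturb it. By comparison, the enumeration of feature types in the reduction is routine once one records which groups $H^k(\Mb_{g',n'})$ vanish in the relevant range.
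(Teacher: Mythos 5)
Your proposal is correct and follows essentially the same route as the paper's proof: both analyze $d\Gamma$ for $\Gamma$ carrying a genus-$2$ weight-$13$ vertex decorated by $Z_{B\subset A}$, show that modulo features (1)--(6) only the $d_\xi$ self-loop term and the $d_\vartheta$ term producing a valence-one weight-$2$ genus-$1$ vertex survive, and split into the cases $A^c\neq\varnothing$ (yielding ($7'$)) and $A^c=\varnothing$ (yielding ($7''$) via $\psi=\tfrac1{12}\delta_{irr}$ on $\Mb_{1,1}$). The only difference is one of sourcing: where you propose to re-derive the pullbacks $\xi^*Z_{B\subset A}$ and $\vartheta^*Z_{B\subset A}$ by excess-intersection bookkeeping, the paper simply invokes the ready-made formulas of \cite[Lemmas~4.3 and 4.5]{CLPW}.
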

Note that the relations (7$'$) and (7$''$) allow us to remove all generators from $\bGK_{g,n}^{13}$ that have a $Z_{B\subset A}$-decorated weight 13 vertex with a self edge $(s,t)$ such that $s,t\in A^c$. We just need to keep in mind that if $A^c=\{s,t\}$ these graphs are not zero in $\bGK_{g,n}^{13}$, rather they are identified with the right hand term in $(7'')$.

\begin{proof}
    The argument is similar to the proof of Lemma \ref{lem:Ign gen}, but we must revisit the generators of the form (7). We consider expressions $d\Gamma$ where $\Gamma$ has a weight 13 vertex $v$ of genus 2 decorated with $Z_{B \subset A}$.
    The only terms in $d\Gamma$ that do not have any of the forms (1)--(6) above are as follows:
    First, $d_\xi\Gamma$ has a term creating a self-edge at $v$, that is of the form  
    \[
\begin{tikzpicture}
    \node[ext] (v) at (0,0) {};
    \node[int] (w) at (0.7,0) {};
    \draw (v) edge +(-.5,.5) edge +(-.5,.3) edge[->-] +(-.5,.1) edge[crossed] (w) 
    (w) edge +(.5,-.5) edge +(.5,0) edge[loop] (w);
    \draw (v) edge[->-] +(-.5,-.3) edge[->-] +(-.5,-.5);
\end{tikzpicture}.
\]
Second, $d_\vartheta\Gamma$ has terms from splitting $v$ into a weight 11 and a weight 2 vertex, each of genus 1. 
Because of the genus 1 vertex of weight 2, these terms are of the form (3) above, except in the special case that the new weight 2 vertex has valence 1.
When the new weight $2$ vertex has valence $1$, the contribution to $d_\vartheta \Gamma$ is determined by setting $S = \varnothing$ in \cite[Lemma~4.5]{CLPW}. We now consider two cases.

\textit{Case 1:} Suppose our starting decoration has $A^c \neq \varnothing$. Then the term in $d_\vartheta$ where the new weight $2$ vertex has valence $1$
is zero by \cite[Lemma~4.5]{CLPW}. In this case,
the only term in $d\Gamma$ not already known to be in $I_{g,n}$ is $d_\xi \Gamma$, which has the form (7$'$). In this case, the vertex on the right has valence at least $3$ because the $A^c$ for our original decoration was non-empty. Conversely, given a graph $\Gamma'$ as in (7$'$), contracting the self-loop and replacing it with a genus $1$ vertex gives a graph $\Gamma$ such that $d\Gamma$ is $\Gamma'$ plus terms already known to be in $I_{g,n}^{12,1}$.

\textit{Case 2:} Suppose our starting decoration has $A^c = \varnothing$. Then by \cite[Lemma~4.5]{CLPW}, the term in $d_{\vartheta}$ in which the new weight $2$ vertex has valence $1$ is decorated by $-\psi\in H^2(\Mb_{1,1})$. But in $H^2(\MM_{1,1})$, we have $\psi = \frac 1 {12}\delta_{irr}$, by \cite[Theorem 2.2]{ArbarelloCornalba}. This yields the second graph pictured in (7$''$).
Since $A^c = \varnothing$, the first graph pictured in (7$''$) is
$d_\xi \Gamma$.
\end{proof}

\subsection{One-vertex complex}
The proof of Proposition \ref{prop:bGK} will use the spectral sequence on $\GK^{13}_{g,n}$ from the filtration by the number of vertices.
To this end, we need to study the one-vertex complexes 
\[
V_{g,n}^k = \left( 
H^k(\MM_{g,n}) \xrightarrow{\xi^*}
H^k(\MM_{g-1,n+2})_{\ss_2} \xrightarrow{\xi^*} \cdots 
\xrightarrow{\xi^*}
H^k(\MM_{g-h,n+2h})_{\ss_2\wr \ss_h}
\xrightarrow{\xi^*} \cdots
\right)
\]
introduced in \cite[Section 4]{PayneWillwacher21}. 
As in loc. cit., the action of the wreath product $\mathbb{S}_2 \wr \mathbb{S}_h=\mathbb S_h \ltimes \mathbb S_2^h$ is understood to include a sign that accounts for the permutation of the tensor factors $\mathbb{Q}[-1]^{\otimes h}$ in \eqref{equ:GKdef}. 
From \cite{PayneWillwacher21} we recall the following results on the cohomology:
\begin{align}
\nonumber
H^h(V_{g,n}^0) &= 
\begin{cases}
\Q & \text{if $g=h=0$ and $n\geq 3$} \\
0 & \text{otherwise}
\end{cases}
\\
\label{equ:V2 cohom}
H^{2+h}(V_{g,n}^2) 
&=
\begin{cases}
H^2(\MM_{0,n}) & \text{if $g=0$, $h=0$, and $n\geq 4$} \\
\Q \delta_{irr} & \text{if $g=1$, $h=0$, and $n\geq 1$} \\
H^2(\MM_{0,n+2})_{\mathbb{S}_2}/\xi^*H^2(\MM_{1,n}) & \text{if $g=1$, $h=1$, and $n\geq 2$} \\
0 & \text{otherwise}
\end{cases}.
\end{align}
We recall that $H^2(\MM_{0,n})$ is spanned by the classes $\delta_A=\delta_{0,A}=\delta_{A^c}$ for $A\subset \{1,\dots,n\}$, and the subspace $\xi^*H^2(\MM_{1,n})\subset H^2(\MM_{0,n+2})_{\mathbb{S}_2}$ is spanned by classes $\delta_A$ with $\{n+1,n+2\}\subset A$ or $\{n+1,n+2\}\subset A^c$.

Note that $V_{g,n}^{11}$ has at most one non-zero term, and we obtain
\[
H^{11+h}(V^{11}_{g,n})\cong
\begin{cases}
H^{11}(\MM_{1,n+2h})_{\ss_2\wr \ss_{h}} & \text{if $g=h+1$} \\
0 & \text{otherwise}
\end{cases}.
\]
\begin{lem}
We have 
\[
H^{13+h}(V_{g,n}^{13})
=\begin{cases}
H^{13}(\MM_{1,n+2h})_{\ss_2\wr \ss_h} / J 
& \text{if $g=h+1$} \\
0 & \text{otherwise}
\end{cases}
\]
where $J=\xi^*H^{13}(\MM_{2,n+2h-2})_{\ss_2\wr \ss_{h-1}}$ for $h\geq 1$ and $J=0$ for $h=0$.
Concretely, if we decompose $J \otimes \mathbb{C} = J^{12,1} \oplus J^{1,12}$, then $J^{12,1}$ is spanned for $h>0$ by the $Z_{B\subset A}$ such that $\{s,t\}\subset A^c$, where $s,t$ correspond to the markings connected by $\xi$.
\end{lem}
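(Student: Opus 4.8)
The plan is to compute $V_{g,n}^{13}$ by the same method used for the weight $2$ one-vertex complex in \cite[Section~4]{PayneWillwacher21}, feeding in the presentation of $H^{13}(\MM_{g',n'})$ from \cite[Section~4]{CLPW} in place of the tautological description of $H^2$. First come routine reductions. All differentials are induced by pullbacks of algebraic maps, hence are strict for the Hodge filtration, so $V_{g,n}^{13}\otimes\cc=V_{g,n}^{12,1}\oplus V_{g,n}^{1,12}$ and it suffices to treat the $(12,1)$-part, the other following by complex conjugation. The complex has entries $H^{13}(\MM_{g-h,n+2h})_{\ss_2\wr\ss_h}$ for $h=0,\dots,g$, placed in cohomological degree $13+h$, and its last entry vanishes because genus zero moduli spaces have cohomology only in even degrees. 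Hence for $g=0$ the complex is zero, for $g=1$ it is $H^{13}(\MM_{1,n})$ in degree $13$, and for $g\ge 2$ the assertion reduces to showing that the complex $H^{13}(\MM_{g,n})\to\cdots\to H^{13}(\MM_{1,n+2g-2})_{\ss_2\wr\ss_{g-1}}$ is exact outside its top degree $13+(g-1)$, in which degree its cohomology is by definition the cokernel of the last differential, i.e. $H^{13}(\MM_{1,n+2g-2})_{\ss_2\wr\ss_{g-1}}/J$ with $h=g-1$.

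The core of the proof is this interior exactness, that is, the acyclicity of the part of the complex in entries of genus $\ge 2$ together with its map into the genus $1$ entry. I would establish it by a spectral sequence coming from a filtration of each entry $H^{12,1}(\MM_{g-h,n+2h})$ by the combinatorial type of its generators, following the weight $2$ argument. The facts from \cite[Section~4]{CLPW} that make this work are that $H^{12,1}(\MM_{g',n'})$ is generated by pushforwards $\iota_*(\omega\otimes 1)$ of $\omega$-classes along boundary gluing maps whose image carries a genus $1$ component, that the weight $11$ decoration $\omega$ can only sit on such a component, and that the presentation is uniform in $g'$ for $g'\ge 2$. On the associated graded the differential $\xi^*$ simply records on which component of the stratum the two glued markings $s,t$ land, so that the computation reduces to the acyclicity of elementary Koszul-type complexes, supplemented by the cohomology of the weight $0$ and weight $2$ one-vertex complexes recalled in \eqref{equ:V2 cohom}; the $\ss_2\wr\ss_h$-equivariance, including the signs from the edge orientations, is tracked as in \cite[Section~4]{PayneWillwacher21}. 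In particular one obtains that $\xi^*$ is injective on $H^{13}(\MM_{g,n})$ for $g\ge 2$, which kills the cohomology in degree $13$, and the spectral sequence collapses to leave cohomology only in the top degree.

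It remains to identify $J=\xi^*H^{13}(\MM_{2,n+2h-2})_{\ss_2\wr\ss_{h-1}}$ inside $H^{13}(\MM_{1,n+2h})_{\ss_2\wr\ss_h}$ for $h>0$; by the Hodge decomposition this amounts to computing $J^{12,1}$. The group $H^{12,1}(\MM_{2,m})$ is generated by pushforwards along the genus $1$ boundary maps of $\MM_{2,m}$, namely the separating maps $\MM_{1,S\cup p}\times\MM_{1,S^c\cup p'}\to\MM_{2,m}$ and the irreducible map $\MM_{1,m+2}\to\MM_{2,m}$. Pulling these back along $\xi\colon\MM_{1,n+2h}\to\MM_{2,n+2h-2}$ with the self-intersection formula for a boundary pushforward --- exactly the kind of computation behind $\xi^*Z_{B\subset A}$ in \cite[Lemma~4.5]{CLPW} --- one finds that, modulo classes with one of the features (1)--(6), the result is a combination of $Z_{B\subset A}$'s in which both glued markings $s,t$ lie on the rational tail $A^c$: when the non-separating node created by $\xi$ is un-glued, its two branches must sit on a genus $0$ component, since otherwise there is no genus $1$ vertex left to carry $\omega$. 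Conversely, every $Z_{B\subset A}$ with $\{s,t\}\subset A^c$ arises, up to a nonzero scalar, as the $\xi$-pullback of the separating generator of $H^{12,1}(\MM_{2,n+2h-2})$ attached to the splitting $A\sqcup(A^c\setminus\{s,t\})$ with $s,t$ put on the genus $0$ factor. This gives the claimed description of $J^{12,1}$, and hence of $J$.

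I expect the interior exactness of the second paragraph to be the main obstacle. In weight $11$ only genus $1$ contributes and $V_{g,n}^{11}$ has a single nonzero entry, but in weight $13$ the genus-$\ge 2$ entries $H^{13}(\MM_{g',n'})$ are typically nonzero, so their contributions must cancel, and this cancellation has to be extracted from the \cite{CLPW} presentation; arranging the filtration so that the spectral sequence degenerates to exactly the genus $1$ cokernel --- and separately dealing with the finitely many small $(g,n)$ not yet covered by the stable presentation --- is where the real work lies. Granting that, the identification of $J$ in the third paragraph is a direct, if lengthy, computation with the pullback formulas.
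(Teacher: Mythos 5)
Your overall strategy --- reduce to the $(12,1)$ part, observe that the genus-zero entry of the complex vanishes, and then compute everything explicitly using the presentation of $H^{12,1}(\MM_{g',n'})$ by the classes $Z_{B\subset A}$ from \cite{CLPW} together with the pullback formula for $\xi^*$ --- is the same as the paper's, and your identification of $J^{12,1}$ in the third paragraph matches the paper's conclusion that the image of $\xi^*$ is spanned by the classes in which both new markings $s,t$ lie in $A^c$.

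The gap is the one you yourself flag: the interior exactness (vanishing of the cohomology at the entries of genus $\geq 2$) is asserted rather than proven, and the mechanism you propose for it (``a filtration by combinatorial type \dots\ reduces to acyclicity of elementary Koszul-type complexes'') is too vague to check. What the paper does at this point is concrete and is the real content of the lemma: it writes down an explicit basis of the coinvariants $H^{12,1}(\MM_{g-h,n+2h})_{\ss_2\wr \ss_h}$, indexed by data $(B\subset A,\underline h)$ recording how the endpoints of the $h$ self-edges are distributed among $B'$, $A'\setminus B'$ and $(A')^c$. Producing this basis requires (i) the observation that the sign in the $\ss_2\wr\ss_h$-action kills every generator having two or more self-edges of the same type outside $B'$, and (ii) the relations of \cite[Lemma~4.11]{CLPW} to cut the surviving generators down to a linearly independent set. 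One then checks that $\xi^*$ carries each basis element with $h_{33}=0$ and $g>h+1$ to a nonzero multiple of a basis element with $h_{33}=1$ one step further along the complex --- with one exceptional ``tadpole'' case, where the image is not itself a basis element and must be rewritten via the genus-$1$ relations as in \eqref{equ:Z tadpoles} before one sees whether it is zero or not. Only after all this does the cancellation in pairs occur and the cohomology concentrate in the top degree. None of these three ingredients (the sign analysis on coinvariants, the extraction of an actual basis from the relations, and the tadpole case) appears in your sketch, and without them the claimed injectivity of $\xi^*$ in degree $13$ and the collapse of your spectral sequence are not established.
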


\begin{proof}
We consider the term of degree $13+h$,
\[
 H^{12,1}(\MM_{g-h,n+2h})_{\ss_2\wr \ss_h} 
\]
in the complex $V_{g,n}^{12,1}$.
We index the $n+2h$ marked points in $\MM_{g-h,n+2h}$ by the set $$S_h:= \{1,\dots, n, s_1,t_1,\dots,s_h,t_h\}.$$ 
We think of self-edges being present between $s_j$ and $t_j$ (for $j=1,\dots,h$).
The coinvariants $H^{12,1}(\MM_{g-h,n+2h})_{\ss_2\wr \ss_h}$ is a quotient of 
$H^{12,1}(\Mb_{g-h,n+2h})$, and is therefore
generated by the images of $Z_{B'\subset A'} \in H^{12,1}(\Mb_{g-h,n+2h})$ with $B'\subset A'\subset S_h$. Below, we identify a subset of these generators that is a basis.

The two endpoints $s_j, t_j$ of our self-edges can each be in one of the three disjoint sets $B', A'\setminus B',$ or $(A')^c$.
Accordingly there are 6 different types of such self-edges, of which one (both endpoints in $B'$) is zero due to symmetry.

Denote the number of self-edges in the five remaining types by $h_{12},h_{13},h_{22}, h_{23}$, and $h_{33}$, where $h_{k\ell}$ is the number of self-edges between the $k$th and $\ell$th disjoint sets.
If any of $h_{22}, h_{23},$ or $h_{33}$ is $2$ or more, this element vanishes in the space of coinvariants because of the sign for permuting edges.
Let $B\subset A\subset \{1,\dots,n\}$ and numbers $h_{22},h_{23}, h_{33}\in\{0,1\}$, $h_{12},h_{13}\geq 0$ be given such that $h_{12}+h_{13}+h_{22}+ h_{23}+ h_{33}=h$, $|B|+h_{12}+h_{13}=10$ and $2(g-h-1) + |A^c|+h_{13}+h_{23}+2h_{33}\geq 2$. Here the last condition ensures that $|(A')^c| \geq 2$ if $g - h - 1 = 0$. 
Then we set $$\underline h = (h_{12},h_{13},h_{22}, h_{23}, h_{33}) \quad \mbox{ and } \quad 
Z_{ B\subset A, \underline h}
=
Z_{B\subset A, h_{12},h_{13},h_{22}, h_{23}, h_{33}} = Z_{B'\subset A'},
$$ 
with $B':=B\sqcup \{s_1,\dots, s_{h_{12}+h_{13}}\}$ and 
$$
A':= (A\cup B') \sqcup \{t_1,\dots,t_{h_{12}}\}
\sqcup \underbrace{\{ s_{h_{12}+h_{13}+1}, t_{h_{12}+h_{13}+1} \}}_{\text{if $h_{22}=1$}} 
\sqcup 
\{s_{h_{12}+h_{13}+h_{22}+1},\dots,  s_{h_{12}+h_{13}+h_{22}+h_{23}}\}.
$$ 
Then the $Z_{B\subset A, \underline h}$ form a basis 
for $H^{12,1}(\MM_{g-h,n+2h})_{\ss_2\wr \ss_h}$
if we require in addition the following condition coming from \cite[Lemma~4.11]{CLPW}:
\begin{itemize}
    \item If $|(A')^c| = 2$ and $g=h+1$, then $\min((A')^c)\in \{1,\dots,n\}$ and $\min(A')^c<\min(B')$, where we order the set $S_h= \{1,\dots, n, s_1,t_1,\dots,s_h,t_h\}$ as indicated. 
\end{itemize}
Note that in the case $A=\{1,\dots,n\}$ the element is zero modulo the relations.

In our basis, by \cite[Lemma~4.3]{CLPW}, the differential $\xi^*$ acts as 
\[
\xi^*(Z_{B\subset A, h_{12},h_{13},h_{22}, h_{23}, h_{33}})
=
\begin{cases}
0 &\text{if $g=h+1$ or $h_{33}=1$} \\
Z_{B\subset A, h_{12},h_{13},h_{22}, h_{23}, 1} &\text{if $g>h+1$ and $h_{33}=0$.} 
\end{cases}
\]
The element on the right-hand side (if not $0$) is again a basis element of 
\[H^{12,1}(\MM_{g-h-1,n+2h+2})_{\ss_2\wr \ss_h} \subset V_{g,n}^{12,1}\] except when $g=h+2$ and $|A^c|=h_{13}=h_{23}=h_{33}=0$. 
In the latter case we have 
\begin{equation}\label{equ:Z tadpoles}
Z_{B\subset A, h_{12},h_{13},h_{22}, h_{23}, 1}
=\begin{cases}
    0 & \text{if $B=\emptyset$} \\
    \pm 2
    Z_{(B\setminus i)\subset A, h_{12},h_{13}+1,h_{22}, h_{23}, 0} & \text{if $B \neq \emptyset$}
\end{cases}
\end{equation}
for $i:=\min(B)$, using the genus $1$ relations as in \cite[Lemma~4.6]{CLPW}. 
Computing cohomology, the basis elements hence cancel in pairs, except for the following:
\begin{itemize}
\item Elements $Z_{B\subset A, h_{12},h_{13},h_{22}, h_{23}, 0}$ in the case that $g=h+1$, with $|A^c|+h_{13}+h_{23}\geq 3$.
\item Elements 
$Z_{B\subset A, h_{12},0,h_{22}, h_{23}, 0}$ in the case that $g=h+1$, with $A^c\neq \emptyset$, $|A^c|+h_{23}=2$ and $\min(A^c)<\min(B)$.
\end{itemize}
In particular, these all live in the top degree $13+g-1$ of the complex $V^{13}_{g,n}$ so that the result of the lemma follows.
\end{proof}

\begin{rem}\label{rem:graph vanishing}
    Note that \eqref{equ:Z tadpoles} in particular implies that the element $Z_{(B\setminus i)\subset A, h_{12},h_{13},h_{22}, h_{23}, 0}$ with $h_{13}>0$ is trivial in the cohomology of $V^{13}_{g,n}$.
    
    Similarly, consider the definition of $\bGK^{12,1}_{g,n}$ and Lemma \ref{lem:Ign 2} above.
    Let $\Gamma\in \bGK^{12,1}_{g,n}$ be a graph that has a weight 13 vertex of the form
    \begin{equation}\label{equ:tp gr}
    \begin{tikzpicture}
        \node[ext] (v) at (0,0) {};
        \node[int] (w) at (0.7,0) {};
        \node (u) at (1.2,0) {};
        \draw (v) edge[crossed] (w) edge[->-, bend left] (w) edge +(-.3,0) edge +(-.3,.3) edge +(-.3,-.30)
        (w) edge (u);
    \end{tikzpicture}
    \end{equation}
    i.e., there is at least one self-edge with one endpoint in $B'$, the other in $(A')^c$, and we have $|(A')^c|=2$.
   The relations in $H^{12,1}(\MM_{1,n})$ are determined by \cite[Lemma 4.6]{CLPW} (for $n = 12$) and the paragraph following it (for $n \geq 12$). Using these relations, we have that 
    \begin{equation} \label{rel1}
    \begin{tikzpicture}
        \node[ext] (v) at (0,0) {};
        \node[int] (w) at (0.7,0) {};
        \node (u) at (1.4,0) {$\scriptstyle \cdots$};
        \draw (v) edge[crossed] (w) edge[->-, bend left] (w) edge +(-.3,0) edge +(-.3,.3) edge +(-.3,-.30)
        (w) edge (u);
    \end{tikzpicture}
    =- \frac 12 
        \begin{tikzpicture}
        \node[ext] (v) at (0,0) {};
        \node[int] (w) at (0.7,0) {};
        \node (u) at (0.3,0.7) {$\scriptstyle \cdots$};
        \draw (v) edge[crossed] (w) edge +(-.3,0) edge +(-.3,.3) edge +(-.3,-.30)
        (w) edge[loop] (w)
        (v) edge[->-] (u);
    \end{tikzpicture}
    =
    -
    \frac 1{24}
        \begin{tikzpicture}
        \node[ext] (v) at (0,0) {};
        \node[int] (w) at (0.7,0) {};
        \node (u) at (0.3,0.7) {$\scriptstyle \cdots$};
        \draw (v) edge (w) edge +(-.3,0) edge +(-.3,.3) edge +(-.3,-.30)
        (w) edge[loop, crossed] (w)
        (v) edge[->-] (u);
    \end{tikzpicture}
    \end{equation}
    modulo graphs of the form in (7$'$), so the above equality holds in $\bGK_{g,n}^{12,1}$.
    In particular, generators of $\bGK_{g,n}^{12,1}$ of the form \eqref{equ:tp gr} can be replaced by generators of the form given by the right-hand graph in the previous equation.
    Furthermore, note that in the case of two marked edges parallel to the crossed edge (as in the picture below) the graph is in fact zero in $\bGK_{g,n}^{12,1}$ 
    by the relations in $H^{12,1}(\Mb_{1,n})$ and symmetry:
    \begin{equation} \label{tripodzero}
    \begin{tikzpicture}
        \node[ext] (v) at (0,0) {};
        \node[int] (w) at (0.7,0) {};
        \draw (v) edge[crossed] (w) edge[->-, bend left] (w) edge[->-, bend right] (w) edge +(-.3,0) edge +(-.3,-.15) edge +(-.3,-.30);
    \end{tikzpicture}
    =- \frac 12 
        \begin{tikzpicture}
        \node[ext] (v) at (0,0) {};
        \node[int] (w) at (0.7,0) {};
        \draw (v) edge[crossed] (w) edge +(-.3,0) edge +(-.3,-.15) edge +(-.3,-.3)
        (w) edge[loop] (w);
        \draw (v) edge[loop] (v)
        (v) edge[->] +(50:.2)
        (v) edge[->] +(140:.2)
        ;
    \end{tikzpicture}
    =
0.
    \end{equation}
\end{rem}

\subsection{Proof of Proposition \ref{prop:bGK}}\label{sec:prop bGK proof}
We equip both the domain and target of the projection 
\[
\pi \colon \GK_{g,n}^{13} \to \GK_{g,n}^{13}/I_{g,n} =: \bGK_{g,n}^{13}
\]
with the descending filtration on the number of vertices in the generating graphs. The associated spectral sequences automatically converge to the cohomologies by finite dimensionality.
Hence it is sufficient to show that the morphism $\pi$ induces a quasi-isomorphism on the $E_1$-pages of the spectral sequences.
First consider the $E^0$-page:
The associated graded differential on $\GK_{g,n}^{13}$ is given by $d_\xi$, while on $\GK_{g,n}^{13}/ I_{g,n}$ it is zero. 
The complex $(\GK_{g,n}^{13}, d_\xi)$ is identified with a sum of direct summands of tensor products of one-vertex complexes. More precisely, we may write 
\[
(\GK_{g,n}^{13}, d_\xi)
\cong 
\Big(\bigoplus_{\Gamma} 
\otimes_{v\in V(\Gamma)}
V_{g_v, n_v}^{k_v}
\otimes \Q[-1]^{\otimes |E(\Gamma)|}
\Big)_{\Aut(\Gamma)}
\]
where the sum is over isomorphism classes of stable graphs without self-loops of genus $g$, with $n$ external legs and with weighted vertices of total weight $13$, and the $V_{g_v, n_v}^{k_v}$ are the one-vertex complexes of the preceding section.
Since taking cohomology commutes with taking coinvariants under finite group actions, as well as sums and tensor products, we find that the $E_1$-page of our spectral sequence is 
\[
E_1:=
H(\GK_{g,n}^{13}, d_\xi)
\cong 
\Big(\bigoplus_{\Gamma} 
\otimes_{v\in V(\Gamma)}
H(V_{g_v, n_v}^{k_v})
\otimes \Q[-1]^{\otimes |E(\Gamma)|}
\Big)_{\Aut(\Gamma)}.
\]
Comparing the cohomologies of the one-vertex complexes $H(V_{g_v, n_v}^{k_v})$ computed in the previous section with the definition of $\bGK_{g,n}^{13}$, we see that the induced map $E_1\to \bGK_{g,n}^{13}$ is almost an isomorphism. The only difference is that by \eqref{equ:V2 cohom} the cohomology of $V_{1, m}^{2}$ is non-trivial in degree 2 for every $m\geq 1$ (represented by $\delta_{irr}$), whereas in the definition of $\bGK_{g,n}^{13}$ we set all graphs with a genus-1-weight-$2$ vertex of valence $\geq 1$ or not adjacent to the weight 11 vertex to zero.  
To address this difference one proceeds again as in  \cite{PayneWillwacher21} and considers a second, nested spectral sequence such that the differential on the associated graded splits off the weight 2 vertex and creates a univalent vertex.
\[
  s\colon
  \begin{tikzpicture}
    \node[int] (v) at (0,0) {};
    \draw (v) edge +(-.5,-.5) edge +(0,-.5) edge +(.5,-.5)
    edge[loop, crossed] (v);
  \end{tikzpicture}
\mapsto 
\begin{tikzpicture}
  \node[int] (v) at (0,0.5) {};
  \node[int] (w) at (0,0) {};
  \draw (w) edge +(-.5,-.5) edge +(0,-.5) edge +(.5,-.5) edge (v)
  (v) edge[loop, crossed] (v);
\end{tikzpicture},
\]
This differential has a homotopy that contracts the complex $E_1$ to the subcomplex spanned by graphs such that the weight 2 vertex (if present) is univalent and connected to the weight 11 vertex. Hence the map $E_1\to \bGK_{g,n}^{13}$ is a quasi-isomorphism and we are done.
\hfill\qed 

\subsection{The differential on \texorpdfstring{$\bGK_{g,n}^{12,1}$}{bGK}}
\label{sec:differential}
Recall from Section \ref{sec:feyn recollection} that the differential on $\GK_{g,n}$ has two terms $d=d_\vartheta+d_\xi$.
This differential descends to a differential on the complex $\bGK_{g,n}^{12,1}$  whose combinatorial form we shall now describe.
First, the piece $d_\xi$ is zero on $\bGK_{g,n}^{12,1}$, essentially by construction.
Graphically the piece $d_\vartheta$ acts by splitting vertices. We shall describe the action on each type of vertex.
\begin{itemize}
    \item Weight 0 vertices: Here $d_\vartheta$ acts by replacing the vertex with two vertices, summing over all ways of attaching the incident half-edges:
\[
\begin{tikzpicture}
    \node[int] (v) at (0,0) {};
    \draw (v) edge +(-.3,-.3) edge +(-.3,.3) edge +(-.3,0)              edge +(.3,-.3) edge +(.3,0) edge +(.3,.3);
\end{tikzpicture}
\mapsto\sum\,
\begin{tikzpicture}
    \node[int] (v) at (0,0) {};
    \node[int] (w) at (0.5,0) {};
    \draw (v) edge +(-.3,-.3) edge +(-.3,.3) edge +(-.3,0) edge (w) 
    (w) edge +(.3,-.3) edge +(.3,0) edge +(.3,.3);
\end{tikzpicture}
\]
    \item Weight 2 vertex: 
    Here it is helpful to represent the decoration at the weight 2 vertex by drawing two vertices connected by a crossed edge, see \eqref{equ:delta crossed}. Then the action of the differential has three terms, see also \cite[Section 3.3]{PayneWillwacher21}.
\begin{equation}\label{equ:delta split delta}
\begin{aligned}
\begin{tikzpicture}[scale=.9]
  \node[int] (v) at (0,0){};
  \node[int] (w) at (-.7,0){};
  \draw (v) edge[crossed] (w) edge +(0:.5) edge +(60:.5) edge +(-60:.5) 
  (w) edge +(120:.5) edge +(180:.5) edge +(-120:.5);
\end{tikzpicture}
&\mapsto 
\sum_{b'+b''=b}\sum
\begin{tikzpicture}[scale=.9]
  \node[int] (v0) at (0.7,0){};
  \node[int] (v) at (0,0){};
  \node[int] (w) at (-.7,0){};
  \draw (v) edge[crossed] (w) edge (v0) edge +(-60:.5) 
  (v0) edge +(0:.5) edge +(60:.5) 
  (w) edge +(120:.5) edge +(180:.5) edge +(-120:.5);
\end{tikzpicture}
+\sum_{a'+a''=a}\sum
\begin{tikzpicture}[scale=.9]
  \node[int] (w0) at (-1.4,0){};
  \node[int] (v) at (0,0){};
  \node[int] (w) at (-.7,0){};
  \draw (v) edge[crossed] (w)  edge +(-60:.5) edge +(0:.5) edge +(60:.5) 
  (w) edge (w0) edge +(120:.5) 
  (w0) edge +(180:.5) edge +(-120:.5);
\end{tikzpicture}
\\&\quad -
\begin{tikzpicture}[scale=.9]
  \node[int] (v) at (0,0){};
  \node[int,label=0:{$\scriptstyle \psi$}] (w) at (-1,0){};
  \draw (v) edge +(0:.5) edge +(60:.5) edge +(-60:.5) 
  (w) edge (v) edge +(120:.5) edge +(180:.5) edge +(-120:.5);
\end{tikzpicture}
-
\begin{tikzpicture}[scale=1]
  \node[int,label=180:{$\scriptstyle \psi$}] (v) at (0,0){};
  \node[int] (w) at (-1,0){};
  \draw (v) edge (w) edge +(0:.5) edge +(60:.5) edge +(-60:.5) 
  (w) edge +(120:.5) edge +(180:.5) edge +(-120:.5);
\end{tikzpicture}
\end{aligned}
\end{equation}
The $\psi$ in the second line indicates a decorations with a $\psi$-class at the marking pointing to the other vertex. 
The $\psi$ class here can (and should) again be replaced by a decoration by boundary classes $\delta_S$ using the relations on $H^*(\MM_{0,n})$.
    \item Weight 11 vertex:
    A weight 11 vertex with decoration $\omega_A$ is split by pushing off a subset of the half-edges not in $A$ and at most one half-edge in $A$:
    \begin{align} \label{wt11split}
\begin{tikzpicture}
        \node[ext] (v) at (0,0) {};
        \draw (v) edge[->-] +(-.3,-.3)  edge[->-] +(-.3,0) edge +(-.3,.3) edge[->-] +(.3,-.3)  edge +(.3,0) edge +(.3,.3);
        \end{tikzpicture}
        &\mapsto\sum
        \begin{tikzpicture}[baseline=-.65ex]
        \node[ext] (v) at (0,0) {};
        \node[int] (w) at (0.5,0) {};
        \draw (v) edge (w) (v) edge[->-] +(-.3,-.3)  edge[->-] +(-.3,0) edge +(-.3,.3) edge[->-] +(.3,-.3)
         (w)   edge +(.3,0) edge +(.3,.3);
        \end{tikzpicture}
        +\sum 
        \begin{tikzpicture}[baseline=-.65ex]
        \node[ext] (v) at (0,0) {};
        \node[int] (w) at (0.5,0) {};
        \draw (v) edge[->-] (w) (v) edge[->-] +(-.3,-.3)  edge[->-] +(-.3,0) edge +(-.3,.3)
            (w) edge +(.3,-.3)  edge +(.3,0) edge +(.3,.3);
        \end{tikzpicture} \quad .
\end{align}
Note that there must be eleven marked half-edges at the vertex, but we only draw 3 for simplicity of the picture.
    \item Weight 13 vertex:
    It is again helpful to represent the decoration (by $Z_{B\subset A}$, say) at the weight 13 vertex by drawing two vertices connected by a crossed edge.
    The action of the differential $d_\vartheta$ is then determined by the pullback formulas \cite[Lemmas 4.4 and 4.5]{CLPW}. Graphically, this reads:
    \begin{equation}\label{equ:split 13}
    \begin{aligned}
\begin{tikzpicture}
    \node[ext] (v) at (0,0) {};
    \node[int] (w) at (0.7,0) {};
    \draw (v) edge +(-.5,.5) edge +(-.5,.3) edge[->-] +(-.5,.1) edge[crossed] (w) 
    (w) edge +(.5,-.5) edge +(.5,0) edge +(.5,.5);
    \draw (v) edge[->-] +(-.5,-.3) edge[->-] +(-.5,-.5);
    \draw [decorate,decoration={brace,amplitude=5pt}]
  (-1.2,-.5) -- (-1.2,.5) node[midway,xshift=-1em]{$A$};
    \draw [decorate,decoration={brace,amplitude=5pt}]
  (-.7,-.5) -- (-.7,.1) node[midway,xshift=-.7em]{$\scriptstyle B$};
\end{tikzpicture}
\mapsto
&
\sum 
\begin{tikzpicture}
    \node[ext] (v) at (0,0) {};
    \node[int] (vv) at (-.3,.3) {};
    \node[int] (w) at (0.7,0) {};
    \draw (v)  edge[->-] +(-.5,.1) edge[crossed] (w) 
    (w) edge +(.5,-.5) edge +(.5,0) edge +(.5,.5);
    \draw (v) edge[->-] +(-.5,-.3) edge[->-] +(-.5,-.5) edge (vv);
    \draw (vv) edge +(-.5,.5) edge +(-.5,.3);
\end{tikzpicture}
+
\sum 
\begin{tikzpicture}
    \node[ext] (v) at (0,0) {};
    \node[int] (vv) at (-.4,.1) {};
    \node[int] (w) at (0.7,0) {};
    \draw (v)  edge[->-] +(-.5,-.3) edge[crossed] (w) edge +(-.5,.5) 
    (w) edge +(.5,-.5) edge +(.5,0) edge +(.5,.5);
    \draw (v)  edge[->-] +(-.5,-.5) edge[->-] (vv);
    \draw (vv) edge +(-.5,.3) edge +(-.5,-.3);
\end{tikzpicture}
+
\sum 
\begin{tikzpicture}
    \node[ext] (v) at (0,0) {};
    \node[int] (vv) at (1,.3) {};
    \node[int] (w) at (0.7,0) {};
    \draw (v) edge +(-.5,.5) edge +(-.5,.3) edge[->-] +(-.5,.1) edge[crossed] (w) 
    (w) edge +(.5,-.5) ;
    \draw (v) edge[->-] +(-.5,-.3) edge[->-] +(-.5,-.5);
    \draw (vv) edge (w) edge +(.5,0) edge +(.5,.5);
\end{tikzpicture}
   \\
   & -
    \begin{tikzpicture}
    \node[ext,label=0:{$\scriptstyle \psi$}] (v) at (0,0) {};
    \node[int] (w) at (1,0) {};
    \draw (v) edge +(-.5,.5) edge +(-.5,.3) edge[->-] +(-.5,.1) edge[->-] (w) 
    (w) edge +(.5,-.5) edge +(.5,0) edge +(.5,.5);
    \draw (v) edge[->-] +(-.5,-.3) edge[->-] +(-.5,-.5);
\end{tikzpicture}
    -
    \begin{tikzpicture}
    \node[ext] (v) at (0,0) {};
    \node[int,label=180:{$\scriptstyle \psi$}] (w) at (1,0) {};
    \draw (v) edge +(-.5,.5) edge +(-.5,.3) edge[->-] +(-.5,.1) edge[->-] (w) 
    (w) edge +(.5,-.5) edge +(.5,0) edge +(.5,.5);
    \draw (v) edge[->-] +(-.5,-.3) edge[->-] +(-.5,-.5);
\end{tikzpicture}
+
\sum 
    \begin{tikzpicture}
    \node[ext] (v) at (0,0) {};
    \node[int] (vv) at (0.7,0) {};
    \node[int] (w) at (1.4,0) {};
    \draw (v)  edge +(-.5,.3) edge[->-] +(-.5,.1) edge[->-] (vv) (vv) edge[crossed] (w) edge +(-.5,.5)
    (w) edge +(.5,-.5) edge +(.5,0) edge +(.5,.5);
    \draw (v) edge[->-] +(-.5,-.3) edge[->-] +(-.5,-.5);
\end{tikzpicture}
\end{aligned}
    \end{equation}
    Again, the $\psi$-class decoration in the second line can be replaced by a boundary class using the relations between $\psi$-classes and boundary classes in genus 0 and 1.
\end{itemize}

\newcommand{\myB}{\bGK^{12,1}}
\section{Low excess computations in weight 13}\label{sec:low excess13}
\subsection{Blown-up pictures of graphs}\label{sec:blownup}
Generators of $\myB_{g,n}$ are connected graphs, with exactly one vertex of weight 11 or 13, and for the decoration of this vertex we may use the graphical notation of the previous section.
Here we perform explicit computations of the cohomology of $\myB_{g,n}$ for the pairs $(g,n)$ such that $3g + 2n$ is at most 27. In particular, we prove Proposition~\ref{prop:wt 13 vanishing} and Theorem~\ref{thm:lowexc13}. To this end, we need to enumerate all contributing graphs, and this is notationally easier if we draw our graphs differently, using the blown-up picture as in \cite{PayneWillwacher21, PayneWillwacher24}.
More precisely, for a weight 11 vertex decorated by $\omega_A$, we remove the genus $1$ vertex, leaving the edges incident to $v$ as external legs. Of those, we mark the ones in $A$ with a symbol $\omega$ and the others by $\epsilon$. 
\[
\begin{tikzpicture}
    \node[ext] (v) at (0,0) {};
    \node at (.5, .1) {$\scriptscriptstyle \vdots$};
    \draw (v) edge +(-.5,-.5) edge +(-.5,0) edge +(-.5,+.5) 
    edge[->-] +(.5,+.5) edge[->-] +(.5,+.3) edge[->-] +(.5,-.3) edge[->-] +(.5,-.5)
    ;
    \draw [decorate,decoration={brace,amplitude=5pt}]
  (.8,.5) -- (.8,-.5) node[midway,xshift=1em]{$A$};
  \end{tikzpicture}
\quad \to \quad
\begin{tikzpicture}
\node (e1) at (0,0) {$\epsilon$};
\node  at (.6,0) {$\cdots$};
\node (e2) at (1.2,0) {$\epsilon$};
\node (o1) at (1.7,0) {$\omega$};
\node  at (2.3,0) {$\cdots$};
\node (o2) at (2.9,0) {$\omega$};
\draw (e1) edge +(0,.5) (e2) edge +(0,.5) (o1) edge +(0,.5) (o2) edge +(0,.5) ;
\end{tikzpicture}
\]
Similarly, a weight $13$ vertex decorated by $Z_{B\subset A}$ is represented as in the following picture 
\[
\begin{tikzpicture}
    \node[ext] (v) at (0,0) {};
    \node[int] (w) at (0.7,0) {};
    \draw (v) edge +(-.5,.5) edge +(-.5,.3) edge[->-] +(-.5,.1) edge[crossed] (w) 
    (w) edge +(.5,-.5) edge +(.5,0) edge +(.5,.5);
    \draw (v) edge[->-] +(-.5,-.3) edge[->-] +(-.5,-.5);
    \draw [decorate,decoration={brace,amplitude=5pt}]
  (-1.2,-.5) -- (-1.2,.5) node[midway,xshift=-1em]{$A$};
    \draw [decorate,decoration={brace,amplitude=5pt}]
  (-.7,-.5) -- (-.7,.1) node[midway,xshift=-.7em]{$\scriptstyle B$};
\end{tikzpicture}
\to 
\begin{tikzpicture}
\node (e1) at (0,0) {$\epsilon$};
\node  at (.6,0) {$\cdots$};
\node (e2) at (1.2,0) {$\epsilon$};
\node (o1) at (1.7,0) {$\omega$};
\node  at (2.3,0) {$\cdots$};
\node (o2) at (2.9,0) {$\omega$};
\node (o3) at (3.4,0) {$\omega$};
\node[int] (i) at (3.4,.7) {};
\draw (e1) edge +(0,.5) (e2) edge +(0,.5) (o1) edge +(0,.5) (o2) edge +(0,.5) ;
\draw (i) edge[crossed] (o3) edge +(0,.5) edge +(0.5,.5) edge +(-0.5,.5);
\draw [decorate,decoration={brace,amplitude=5pt}]
  (3.1,-.3) -- (1.5,-.3) node[midway,yshift=-.9em]{$\scriptstyle B$};
\draw [decorate,decoration={brace,amplitude=5pt}]
  (3.1,-.9) -- (-.2,-.9) node[midway,yshift=-.9em]{$\scriptstyle A$};
\end{tikzpicture}
\]
We emphasize that this is merely a different way of drawing graphs.
Furthermore, we remind the reader that due to the relations on the decorations, our graph complex $\myB_{g,n}$ is not fully combinatorial, i.e., there are relations between some graphs.

\subsection{Excess}
We define the excess
\[
E(g,n) := 3g +2n - 25.
\]
We will compute the cohomology of $\myB_{g,n}$ for all pairs $g,n$ such that $E(g,n)\leq 2$. 

To this end, let us write each generator $\Gamma$ of $\myB_{g,n}$ as a union of its blown-up components:
\[
\Gamma = C_1\cup\cdots \cup C_k.
\]
Let $g_i$ be the contribution of $C_i$ to the genus of $\Gamma$.  More precisely,
\[
g_i = h^1(C_i) + \# \epsilon + \#\omega-1,
\]
i.e., the loop order of $C_i$ plus the number of its $\epsilon$ and $\omega$ labeled legs minus one.  Then we define the excess of $C_i$ as 
\begin{equation}\label{equ:exc comp}
E(C_i)
:= 3g_i +2n_i - 2\# \omega
=
3h^1(C_i) + 3\# \epsilon + \#\omega+2n_i -3. 
\end{equation}

\begin{lem}\label{lem:excess13}
The excess is additive over blown-up components, in the sense that for any graph $\Gamma=C_1\cup\cdots \cup C_k\in \myB_{g,n}$ we have
\begin{equation}\label{equ:E additive13}
E(g,n) = E(C_1)+ \cdots + E(C_k),  
\end{equation}
and the excess of each blown-up component is non-negative.  
\end{lem}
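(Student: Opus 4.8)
The plan is to verify the additivity formula \eqref{equ:E additive13} by a direct bookkeeping argument comparing the two sides, and then deduce non-negativity of each $E(C_i)$ by classifying the possible blown-up components. For the additivity, write $\Gamma = C_1 \cup \cdots \cup C_k$ and recall that the genus $g$ of $\Gamma$ is the sum of the loop order (first Betti number) of the underlying graph plus the genera of the vertices; since all weight-$0$ vertices have genus $0$ (feature (1) is killed) and the only positive-genus vertices are the weight-$11$ or weight-$13$ vertex, which in a generator of $\myB_{g,n}$ has genus exactly $1$, the blowing-up operation — which deletes that genus-$1$ vertex and turns its incident edges into external $\omega$- and $\epsilon$-legs — accounts for the missing unit of genus precisely via the ``$-1$'' in $g_i = h^1(C_i) + \#\epsilon + \#\omega - 1$. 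Summing $g_i$ over $i$ therefore recovers $g$: the loop orders $h^1(C_i)$ add up to $h^1$ of the blown-up graph, the $\#\epsilon + \#\omega$ add up to the valence of the deleted vertex, and a short Euler-characteristic computation shows $h^1(\Gamma) = h^1(\text{blown-up graph}) + (\text{valence of the genus-}1\text{ vertex}) - 1$, so that $\sum_i g_i = h^1(\Gamma) + 1 = g$. Likewise $\sum_i n_i = n$ (the external legs of $\Gamma$ are distributed among the components) and, since there is exactly one weight-$11$ or weight-$13$ vertex in all of $\Gamma$, the term $\#\omega$ (which is $11$ in the weight-$11$ case and $10$ in the weight-$13$ case, but contributes $2\#\omega$ to $E$ only once) appears in exactly one $E(C_i)$. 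Comparing with $E(g,n) = 3g + 2n - 25$ and $E(C_i) = 3g_i + 2n_i - 2\#\omega_i$, the constant $-25$ must match $-2\#\omega$ coming from the one special component plus whatever constants hide in the other components; unwinding the second formula $E(C_i) = 3h^1(C_i) + 3\#\epsilon_i + \#\omega_i + 2n_i - 3$ shows that each component — special or not — contributes a $-3$, and one checks that the total is $-3k + (\text{adjustment from }\#\omega) = -25$ exactly when the weights are accounted for as above; here one must be a little careful that the ``$-3$'' per component in the expanded formula is consistent with the ``$-2\#\omega$'' in the compact formula via the relation between $g_i$, $h^1(C_i)$, $\#\epsilon$, $\#\omega$.

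For the non-negativity of $E(C_i)$, I would argue componentwise using the explicit shape of the allowed vertices. A blown-up component $C_i$ has only black (weight-$0$) vertices, possibly $\omega$- and $\epsilon$-legs, and at most one crossed edge attached to a trivalent black vertex (coming from a $\delta_S$ decoration or from the right part of a $Z_{B \subset A}$ decoration). Using $E(C_i) = 3h^1(C_i) + 3\#\epsilon + \#\omega + 2n_i - 3$, the only way to get a negative value is to have $h^1(C_i) = 0$, $\#\epsilon = 0$, and $\#\omega + 2n_i < 3$, i.e. a tree component with very few legs. But a black vertex has valence at least $3$ (stability of weight-$0$, genus-$0$ vertices, i.e. $H^0(\MM_{0,m}) = 0$ for $m < 3$), so a tree with at least one black vertex has at least $3$ leaves, each of which is either an external leg, an $\omega$-leg, an $\epsilon$-leg, or a half-edge of the crossed edge — forcing $\#\epsilon + \#\omega + 2n_i \geq 3$ in all cases (with the crossed-edge possibilities eliminated by the relations (5), (7$'$), \eqref{tripodzero}, which kill loops on black vertices and the degenerate tripod). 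The remaining degenerate case, a component with no black vertex at all — i.e. just $\omega$-legs attached directly, as in the weight-$11$ or weight-$13$ vertex picture — has $\#\omega \in \{10, 11\}$, far exceeding the threshold. Hence $E(C_i) \geq 0$ in every case.

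The main obstacle I expect is the precise bookkeeping in the first part: correctly matching the constant $-25$ against the sum of the per-component constants, keeping straight whether $\#\omega$ is $10$ or $11$ and how that single occurrence interacts with the ``$-3$ per component'' in the expanded formula \eqref{equ:exc comp}. Getting the Euler-characteristic identity $h^1(\Gamma) = h^1(\text{blow-up}) + \text{val} - 1$ exactly right — including the case where the genus-$1$ vertex has self-edges, so that blowing it up produces $\omega$- or $\epsilon$-legs in pairs and the loop order drops accordingly — is the step most prone to off-by-one errors, and is where I would spend the most care. Once the arithmetic is pinned down, the non-negativity is a routine case check driven by the valence-$\geq 3$ condition on black vertices.
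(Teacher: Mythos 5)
Your overall strategy (sum the per-component excesses and compare constants, then rule out negative-excess components by classifying the degenerate ones) is the right one, but the bookkeeping at the heart of the additivity argument is off by one, and the error is not self-correcting. The correct identity is $\sum_i g_i = h^1(\Gamma) = g-1$, not $\sum_i g_i = g$ as you assert: the special genus-one vertex is deleted in the blow-up and its unit of genus is \emph{not} absorbed into any $C_i$, while the ``$-1$'' in $g_i = h^1(C_i)+\#\epsilon+\#\omega-1$ occurs once per component (so $k$ times in total) and exactly compensates the fact that reattaching a component with $m$ legs to the special vertex raises the loop order by $m-1$; your Euler-characteristic identity $h^1(\Gamma)=h^1(\text{blow-up})+\mathrm{val}-1$ should read $h^1(\Gamma)=h^1(\text{blow-up})+\mathrm{val}-k$. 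A second, related misconception: the $\omega$-legs do not all lie in one ``special component.'' The special vertex is removed entirely and its marked half-edges are distributed as legs among \emph{all} the $C_i$; moreover their total number is $11$ in the weight-$13$ case just as in the weight-$11$ case (the ten elements of $B$ plus the one $\omega$-leg carrying the crossed edge), not $10$. With the two correct identities $\sum_i g_i = g-1$ and $\sum_i \#\omega_i = 11$, the computation closes in one line, $\sum_i E(C_i) = 3(g-1)+2n-2\cdot 11 = 3g+2n-25$; with the identities as you state them one gets $3g+2n-22$, and the sentence where you promise that ``one checks that the total is $-25$'' is precisely where the argument does not go through.

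For non-negativity, your valence-$\geq 3$ argument handles components containing a black vertex, but the genuinely dangerous components are those with \emph{no} black vertex, and these are single edges joining two legs (such as the $\omega$--$j$ edges), not a component carrying ten or eleven $\omega$'s. Running through the possibilities $\omega$--$j$, $\epsilon$--$j$, $\omega$--$\epsilon$, $\epsilon$--$\epsilon$ gives excess $0,2,1,3$, but $\omega$--$\omega$ gives excess $-1$; this case must be excluded because a self-loop at the special vertex with both half-edges marked vanishes by antisymmetry of the decoration, which is the content of the reference to \cite[Lemma 4.2]{PayneWillwacher24} in the paper's one-line proof. Your write-up does not address this case.
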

\begin{proof}
For the first statement note that $g= 1+\sum_{i=1}^kg_i$, and the total number of $\omega$-legs must be 11.
The second statement follows as in \cite[Lemma 4.2]{PayneWillwacher24}.
\end{proof}

The excess provides a measure of the combinatorial complexity of the relevant graphs. In particular, the additivity property of the above lemma helps us
to organize our calculations. By enumerating all blown up components of low excess, it becomes possible to compute $H^k(\myB_{g,n})$ by hand for all $g, n$ with $E(g, n) \leq 2$.

\subsection{Excess 0 and proof of Proposition \ref{prop:wt 13 vanishing}}
Blown-up components of excess 0 must have loop order 0 and at most 3 legs by \eqref{equ:exc comp} and are hence easily seen to be one of the following forms:
\[
  \begin{tikzpicture}
    \node (v) at (0,0) {$\omega$};
    \node (w) at (1,0) {$j$};
    \draw (v) edge (w);
  \end{tikzpicture}
      \quad\quad
      \text{or} 
      \quad\quad
    \begin{tikzpicture}
        \node[int] (i) at (0,.5) {};
        \node (v1) at (-.5,-.2) {$\omega$};
        \node (v2) at (0,-.2) {$\omega$};
        \node (v3) at (.5,-.2) {$\omega$};
      \draw (i) edge (v1) edge (v2) edge (v3);
      \end{tikzpicture} 
      \quad\quad 
      \text{or} 
      \quad\quad 
      \begin{tikzpicture}
        \node[int] (i) at (0,.5) {};
        \node (v1) at (-.5,-.2) {$\omega$};
        \node (v2) at (0,-.2) {$\omega$};
        \node (v3) at (.5,-.2) {$\omega$};
      \draw (i) edge[crossed] (v1) edge (v2) edge (v3);
      \end{tikzpicture} 
      .
\]
Using the conventions introduced in \S \ref{pics},
each generator of $\myB_{g,n}$ must have exactly one marked edge. 
Graphs with the third component in the list above are however zero in $\myB_{g,n}$ by \eqref{tripodzero}.
This means that every generator $\Gamma\in \myB_{g,n}$ must have a blown-up component of excess at least one, and we arrive at the following corollary of Lemma \ref{lem:excess13}, that is also stated as Proposition \ref{prop:wt 13 vanishing} in the introduction.

\begin{cor} \label{cor:Eneg13}
If $E(g,n) <1$ then $
\gr^W_{13} H^*_c(\M_{g,n}) =0.$
\end{cor}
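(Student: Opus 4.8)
The plan is to show that, under the hypothesis $E(g,n) < 1$, the complex $\bGK^{13}_{g,n}$ has no nonzero generators at all, and then invoke the quasi-isomorphism of Proposition~\ref{prop:bGK}. By that proposition together with the identification of $\gr^W_{13}H^*_c(\M_{g,n})$ with the cohomology of $\GK^{13}_{g,n}$ recalled in \S\ref{sec:feyn recollection}, we have $\gr^W_{13}H^*_c(\M_{g,n}) \cong H^*(\bGK^{13}_{g,n})$; since $E(g,n) = 3g + 2n - 25$ is an integer, it suffices to treat the case $E(g,n) \le 0$ and to check that every generator of $\bGK^{13}_{g,n}$ is zero in that complex.

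So fix a generator $\Gamma$ and write it in the blown-up form $\Gamma = C_1 \cup \cdots \cup C_k$ of \S\ref{sec:blownup}. Lemma~\ref{lem:excess13} gives $E(C_1) + \cdots + E(C_k) = E(g,n) \le 0$ with each $E(C_i) \ge 0$, which forces $E(g,n) = 0$ and $E(C_i) = 0$ for all $i$. Setting the right-hand side of \eqref{equ:exc comp} equal to zero shows each $C_i$ has loop order zero, no $\epsilon$-legs, and at most three incident half-edges; inspecting the finitely many possibilities, $C_i$ is one of the three graphs displayed at the start of this subsection, namely an $\omega$-leg on a weight-$0$ vertex, the $\omega$-tripod, or the $\omega$-tripod with one crossed edge.

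To finish, I would use the fact that, with the conventions of \S\ref{pics}, every generator of $\bGK^{13}_{g,n}$ has exactly one crossed edge: it records either the decoration $Z_{B\subset A}$ at the weight-$13$ vertex (type~A) or the decoration $\delta_S$ or $\delta_{irr}$ at the weight-$2$ vertex (type~B). Of the three excess-zero components above, only the $\omega$-tripod with a crossed edge contains a crossed edge, so exactly one $C_i$ must be of that type; but that graph is already zero in $\bGK^{12,1}_{g,n}$ by \eqref{tripodzero}, hence $\Gamma = 0$. Therefore $\bGK^{13}_{g,n} = 0$ and $\gr^W_{13}H^*_c(\M_{g,n}) = 0$. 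The only point requiring care is the two combinatorial inputs — completeness of the excess-zero list and uniqueness of the crossed edge — but both are immediate from the definitions in \S\S\ref{pics}--\ref{sec:blownup}, so I do not anticipate a genuine obstacle.
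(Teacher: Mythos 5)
Your proposal is correct and follows essentially the same route as the paper: decompose a generator into blown-up components, use the additivity and non-negativity of excess from Lemma~\ref{lem:excess13} to force all components to have excess zero, enumerate the three excess-zero components, and observe that the unique crossed edge must then lie on the crossed tripod, which vanishes by \eqref{tripodzero}. The only cosmetic difference is that you make explicit the reduction to $\bGK^{13}_{g,n}$ via Proposition~\ref{prop:bGK}, which the paper leaves implicit.
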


\subsection{Excess 1} \label{sec:exc1-13}
We next consider the case of excess one. In this case generators of $\myB_{g,n}$ must be unions of blown-up components of excess zero as above, and one blown-up component of excess one, containing a crossed edge.
The combinatorially possible excess 1 components with a crossed edge are:
\begin{equation}\label{equ:exc 1 special}
\begin{aligned}
    \begin{tikzpicture}
        \node[int] (i) at (0,.5) {};
        \node (v1) at (-.5,-.2) {$\omega$};
        \node (v2) at (0,-.2) {$\omega$};
        \node (v3) at (.5,-.2) {$j$};
      \draw (i) edge[crossed] (v1) edge (v2) edge (v3);
      \end{tikzpicture}
      =-\frac 1{24} \bigg(
            \begin{tikzpicture}
        \node (i) at (0,0) {$\omega$};
        \node (v2) at (.7,0) {$j$};
      \draw (i) edge (v2) ;
      \end{tikzpicture}
        \begin{tikzpicture}
        \node[int] (i) at (0,.5) {};
        \node (v2) at (0,-.2) {$\omega$};
      \draw (i) edge[crossed, loop] (i) edge (v2) ;
      \end{tikzpicture}
      \bigg)
      \quad\text{or}\quad 
      \begin{tikzpicture}
        \node[int] (i) at (0,.5) {};
        \node (v2) at (0,-.2) {$\omega$};
      \draw (i) edge[crossed, loop] (i) edge (v2) ;
      \end{tikzpicture}
      \quad\text{or}\quad 
      \begin{tikzpicture}
        \node[int] (i) at (0,.5) {};
        \node[int] (j) at (1,.5) {};
        \node (v1) at (-.5,-.2) {$\omega$};
        \node (v2) at (0,-.2) {$\omega$};
        \node (v3) at (1,-.2) {$\omega$};
        \node (v4) at (1.5,-.2) {$\omega$};
      \draw (i) edge (v1) edge (v2) edge[crossed] (j) (j) edge (v3) edge (v4);
      \end{tikzpicture}
      \\
      \quad\text{or}\quad 
      \begin{tikzpicture}
        \node[int] (i) at (0,.5) {};
        \node[int] (j) at (1,.5) {};
        \node (v1) at (-.5,-.2) {$\omega$};
        \node (v2) at (0,-.2) {$\omega$};
        \node (v3) at (1,-.2) {$\omega$};
        \node (v4) at (1.5,-.2) {$\omega$};
      \draw (i) edge[crossed] (v1) edge (v2) edge (j) (j) edge (v3) edge (v4);
      \end{tikzpicture}=-\frac1{24}\bigg(
      \begin{tikzpicture}
        \node[int] (i) at (0,.5) {};
        \node (v2) at (0,-.2) {$\omega$};
      \draw (i) edge[crossed, loop] (i) edge (v2) ;
      \end{tikzpicture}
        \begin{tikzpicture}
    \node[int] (i) at (0,.5) {};
    \node (v1) at (-.5,-.2) {$\omega$};
    \node (v2) at (0,-.2) {$\omega$};
    \node (v3) at (.5,-.2) {$\omega$};
  \draw (i) edge (v1) edge (v2) edge (v3);
  \end{tikzpicture}
      \bigg)
      \quad\text{or}\quad 
      \begin{tikzpicture}
        \node[int] (i) at (0,.5) {};
        \node (v1) at (-.7,-.2) {$\omega$};
        \node (v2) at (-.2,-.2) {$\omega$};
        \node (v3) at (.2,-.2) {$\omega$};
        \node (v4) at (.7,-.2) {$\omega$};
      \draw (i) edge[crossed] (v1) edge (v2) edge (v3) edge (v4);
      \end{tikzpicture}
      .
\end{aligned}
\end{equation}
The first and the fourth component can be replaced by other terms using the relations in Remark \ref{rem:graph vanishing}.
Thus, if $E(g,n) = 1$, we have three possible generators for $\myB_{g,n}$: 
\[
  \Gamma^{(0)}_{irr}=
  \begin{tikzpicture}
    \node (v) at (0,0) {$\omega$};
    \node (w) at (1,0) {$1$};
    \draw (v) edge (w);
  \end{tikzpicture}
  \cdots 
  \begin{tikzpicture}
    \node (v) at (0,0) {$\omega$};
    \node (w) at (1,0) {$n$};
    \draw (v) edge (w);
  \end{tikzpicture}
  \begin{tikzpicture}
    \node[int] (i) at (0,.5) {};
    \node (v1) at (-.5,-.2) {$\omega$};
    \node (v2) at (0,-.2) {$\omega$};
    \node (v3) at (.5,-.2) {$\omega$};
  \draw (i) edge (v1) edge (v2) edge (v3);
  \end{tikzpicture}
  \cdots 
  \begin{tikzpicture}
    \node[int] (i) at (0,.5) {};
    \node (v1) at (-.5,-.2) {$\omega$};
    \node (v2) at (0,-.2) {$\omega$};
    \node (v3) at (.5,-.2) {$\omega$};
  \draw (i) edge (v1) edge (v2) edge (v3);
  \end{tikzpicture}
  \begin{tikzpicture}
    \node[int] (i) at (0,.5) {};
    \node (v2) at (0,-.2) {$\omega$};
  \draw (i) edge[crossed, loop] (i) edge (v2) ;
  \end{tikzpicture}
\]
\[
  \Gamma^{(0)}_{\delta}=
  \begin{tikzpicture}
    \node (v) at (0,0) {$\omega$};
    \node (w) at (1,0) {$1$};
    \draw (v) edge (w);
  \end{tikzpicture}
  \cdots 
  \begin{tikzpicture}
    \node (v) at (0,0) {$\omega$};
    \node (w) at (1,0) {$n$};
    \draw (v) edge (w);
  \end{tikzpicture}
  \begin{tikzpicture}
    \node[int] (i) at (0,.5) {};
    \node (v1) at (-.5,-.2) {$\omega$};
    \node (v2) at (0,-.2) {$\omega$};
    \node (v3) at (.5,-.2) {$\omega$};
  \draw (i) edge (v1) edge (v2) edge (v3);
  \end{tikzpicture}
  \cdots 
  \begin{tikzpicture}
    \node[int] (i) at (0,.5) {};
    \node (v1) at (-.5,-.2) {$\omega$};
    \node (v2) at (0,-.2) {$\omega$};
    \node (v3) at (.5,-.2) {$\omega$};
  \draw (i) edge (v1) edge (v2) edge (v3);
  \end{tikzpicture}
  \begin{tikzpicture}
    \node[int] (i) at (0,.5) {};
    \node[int] (j) at (1,.5) {};
    \node (v1) at (-.5,-.2) {$\omega$};
    \node (v2) at (0,-.2) {$\omega$};
    \node (v3) at (1,-.2) {$\omega$};
    \node (v4) at (1.5,-.2) {$\omega$};
  \draw (i) edge (v1) edge (v2) edge[crossed] (j) (j) edge (v3) edge (v4);
  \end{tikzpicture}
\]
\[
  \Gamma^{(0)}_{s}=
  \begin{tikzpicture}
    \node (v) at (0,0) {$\omega$};
    \node (w) at (1,0) {$1$};
    \draw (v) edge (w);
  \end{tikzpicture}
  \cdots 
  \begin{tikzpicture}
    \node (v) at (0,0) {$\omega$};
    \node (w) at (1,0) {$n$};
    \draw (v) edge (w);
  \end{tikzpicture}
  \begin{tikzpicture}
    \node[int] (i) at (0,.5) {};
    \node (v1) at (-.5,-.2) {$\omega$};
    \node (v2) at (0,-.2) {$\omega$};
    \node (v3) at (.5,-.2) {$\omega$};
  \draw (i) edge (v1) edge (v2) edge (v3);
  \end{tikzpicture}
  \cdots 
  \begin{tikzpicture}
    \node[int] (i) at (0,.5) {};
    \node (v1) at (-.5,-.2) {$\omega$};
    \node (v2) at (0,-.2) {$\omega$};
    \node (v3) at (.5,-.2) {$\omega$};
  \draw (i) edge (v1) edge (v2) edge (v3);
  \end{tikzpicture}
  \begin{tikzpicture}
    \node[int] (i) at (0,.5) {};
    \node (v1) at (-.7,-.2) {$\omega$};
    \node (v2) at (-.2,-.2) {$\omega$};
    \node (v3) at (.2,-.2) {$\omega$};
    \node (v4) at (.7,-.2) {$\omega$};
  \draw (i) edge[crossed] (v1) edge (v2) edge (v3) edge (v4);
  \end{tikzpicture}
\]
In each case, the total number of $\omega$-legs must be 11.
Note that the graphs $\Gamma^{(0)}_{\delta}$ and $\Gamma^{(0)}_{s}$ only exist in genera $g\geq 4$. When $g\geq 4$ and $n$ is such that $3g+2n=26$, the three graphs above form a basis of $\myB_{g,n}$. In genus $2$, we have that $\myB_{2,10}$ is one-dimensional, spanned by $\Gamma^{(0)}_{irr}$.

The cohomological degree of the graphs $\Gamma^{(0)}_{irr}$ and $\Gamma^{(0)}_{\delta}$ above is $k=24-n$, while that of $\Gamma^{(0)}_{s}$ is $k=23-n$.
Using \eqref{equ:split 13},
one checks that the differential sends $\Gamma^{(0)}_{s}$ to a non-zero multiple of $\Gamma^{(0)}_{\delta}$ plus a multiple of $\Gamma^{(0)}_{irr}$. Note that of the six terms in \eqref{equ:split 13} only the the third and the fifth term contribute, with the third term producing a multiple of $\Gamma^{(0)}_{irr}$ (after using the relation \eqref{rel1}), and the fifth term producing $-\Gamma^{(0)}_{\delta}$ (after replacing the $\psi$ class with a boundary class). In particular, the first term in the second line of \eqref{equ:split 13} is zero in this case because there cannot be a weight 13 vertex of genus 1 and valence 11.
In the end, we retain only one-dimensional cohomology, generated by $\Gamma^{(0)}_{irr}$. We hence obtain:
\begin{align*}   H^k(\myB_{2,10}) &=
    \begin{cases}
        V_{1^{10}} & \text{for $k=14$} \\
        0 & \text{otherwise}
    \end{cases}
&
H^k(\myB_{4,7}) &=
    \begin{cases}
        V_{1^{7}} & \text{for $k=17$} \\
        0 & \text{otherwise}
    \end{cases}
\\
H^k(\myB_{6,4}) &=
\begin{cases}
    V_{1^{4}} & \text{for $k=20$} \\
    0 & \text{otherwise}
\end{cases}     
&
H^k(\myB_{8,1}) &=
\begin{cases}
    V_1 & \text{for $k=23$} \\
    0 & \text{otherwise.}
\end{cases}  
\end{align*}

\subsection{Excess 2} \label{sec:exc2-13}
A general generator $\Gamma$ of $\myB_{g,n}$ in excess 2 can have either two blown-up components of excess one each, or one component of excess 2, necessarily with a crossed edge.
As in \cite[Section 4.2]{PayneWillwacher24}, the relevant excess one components without crossed edge are 
\begin{equation}\label{equ:exc 1 trees}
  \begin{tikzpicture}
    \node (v) at (0,0) {$\omega$};
    \node (w) at (1,0) {$\epsilon$};
    \draw (v) edge (w);
  \end{tikzpicture}
\quad\text{or}\quad  
\begin{tikzpicture}
  \node[int] (i) at (0,.5) {};
  \node (v1) at (-.5,-.2) {$j$};
  \node (v2) at (0,-.2) {$\omega$};
  \node (v3) at (.5,-.2) {$\omega$};
\draw (i) edge (v1) edge (v2) edge (v3);
\end{tikzpicture}
\quad\text{or}\quad  
\begin{tikzpicture}
  \node[int] (i) at (0,.5) {};
  \node (v1) at (-.6,-.2) {$\omega$};
  \node (v2) at (-.2,-.2) {$\omega$};
  \node (v3) at (.2,-.2) {$\omega$};
  \node (v4) at (.6,-.2) {$\omega$};
\draw (i) edge (v1) edge (v2) edge (v3) edge (v4);
\end{tikzpicture}
\quad\text{or}\quad  
\begin{tikzpicture}
  \node[int] (i) at (0,.5) {};
  \node[int] (j) at (1,.5) {};
  \node (v1) at (-.5,-.2) {$\omega$};
  \node (v2) at (0,-.2) {$\omega$};
  \node (v3) at (1,-.2) {$\omega$};
  \node (v4) at (1.5,-.2) {$\omega$};
\draw (i) edge (v1) edge (v2) edge (j) (j) edge (v3) edge (v4);
\end{tikzpicture}
\, .
\end{equation}
Combining one such blown-up component and one from \eqref{equ:exc 1 special} we obtain 12 possible generators with two blown-up components of excess 1.
There is a larger set of possible excess 2 components containing a crossed edge, listed below.
\begin{align*}
 & \begin{tikzpicture}
    \node[int] (i) at (0,.5) {};
    \node[int] (j) at (1,.5) {};
    \node (v2) at (0,-.2) {$\omega$};
    \node (v3) at (1,-.2) {$\omega$};
  \draw (i) edge[bend left] (j) edge (v2) edge[crossed] (j) (j) edge (v3);
  \end{tikzpicture}
  =
    \begin{tikzpicture}
      \node[int] (i) at (0,.5) {};
      \node[int] (j) at (1,.5) {};
      \node (v1) at (-.5,-.2) {$\omega$};
      \node (v2) at (0,-.2) {$\omega$};
    \draw (i) edge (v1) edge (v2) edge[crossed] (j) (j) edge[loop] (j);
    \end{tikzpicture}
    =(\cdots)
  \quad\text{or}\quad 
  \begin{tikzpicture}
    \node[int] (i) at (0,.5) {};
    \node (v1) at (-.5,-.2) {$\omega$};
    \node (v2) at (0,-.2) {$i$};
    \node (v3) at (.5,-.2) {$j$};
  \draw (i) edge[crossed] (v1) edge (v2) edge (v3);
  \end{tikzpicture}
  \quad\text{or}\quad 
  \begin{tikzpicture}
    \node[int] (i) at (0,.5) {};
    \node[int] (j) at (1,.5) {};
    \node (v1) at (-.5,-.2) {$\omega$};
    \node (v2) at (0,-.2) {$\omega$};
    \node (v3) at (1,-.2) {$\omega$};
    \node (v4) at (1.5,-.2) {$i$};
  \draw (i) edge[crossed] (v1) edge (v2) edge (j) (j) edge (v3) edge (v4);
  \end{tikzpicture}
  =(\cdots)
  \\&
  \quad\text{or}\quad 
  \begin{tikzpicture}
    \node[int] (i) at (0,.5) {};
    \node (v1) at (-.5,-.2) {$\omega$};
    \node (v2) at (0,-.2) {$\omega$};
    \node (v3) at (.5,-.2) {$\epsilon$};
  \draw (i) edge[crossed] (v1) edge (v2) edge (v3);
  \end{tikzpicture}
  =(\cdots)
  \quad\text{or}\quad 
  \begin{tikzpicture}
    \node[int] (i) at (0,.5) {};
    \node[int] (j) at (1,.5) {};
    \node (v1) at (-.5,-.2) {$\omega$};
    \node (v2) at (0,-.2) {$i$};
    \node (v3) at (1,-.2) {$\omega$};
    \node (v4) at (1.5,-.2) {$\omega$};
  \draw (i) edge[crossed] (v1) edge (v2) edge (j) (j) edge (v3) edge (v4);
  \end{tikzpicture}
  \quad\text{or}\quad 
  \begin{tikzpicture}
    \node[int] (i) at (0,.5) {};
    \node (v1) at (-.7,-.2) {$\omega$};
    \node (v2) at (-.2,-.2) {$\omega$};
    \node (v3) at (.2,-.2) {$\omega$};
    \node (v4) at (.7,-.2) {$i$};
  \draw (i) edge[crossed] (v1) edge (v2) edge (v3) edge (v4);
  \end{tikzpicture}
  \quad\text{or}\quad  
  \begin{tikzpicture}
    \node[int] (i) at (0,.5) {};
    \node[int] (j) at (1,.5) {};
    \node (v1) at (-.5,-.2) {$\omega$};
    \node (v2) at (0,-.2) {$\omega$};
    \node (v3) at (1,-.2) {$\omega$};
    \node (v4) at (1.5,-.2) {$i$};
  \draw (i) edge (v1) edge (v2) edge[crossed] (j) (j) edge (v3) edge (v4);
  \end{tikzpicture}
  \\&
  \quad\text{or}\quad 
  \begin{tikzpicture}
    \node[int] (i) at (0,.5) {};
    \node (v1) at (-.3,-.2) {$\omega$};
    \node (v2) at (.3,-.2) {$\omega$};
  \draw (i) edge[crossed, loop] (i) edge (v2) edge (v1);
  \end{tikzpicture}
  =0
  \quad\text{or}\quad
  \begin{tikzpicture}
    \node[int] (i) at (0,.5) {};
    \node[int] (j) at (0,1.2) {};
    \node (v1) at (-.3,-.2) {$\omega$};
    \node (v2) at (.3,-.2) {$\omega$};
  \draw (j) edge[crossed, loop] (j) edge (i) (i) edge (v2) edge (v1);
  \end{tikzpicture}=0
  \quad\text{or}\quad 
  \begin{tikzpicture}
    \node[int] (i) at (0,.5) {};
    \node[int] (j) at (1,.5) {};
    \node (v1) at (-.5,-.2) {$\omega$};
    \node (v2) at (0,-.2) {$\omega$};
    \node (v3) at (1,-.2) {$\omega$};
    \node (v4) at (1.5,-.2) {$\omega$};
    \node (v5) at (.5,-.2) {$\omega$};
  \draw (i) edge (v1) edge (v2) edge (v5) edge[crossed] (j) (j) edge (v3) edge (v4);
  \end{tikzpicture}
  \quad\text{or}\quad 
  \begin{tikzpicture}
    \node[int] (i) at (0,.5) {};
    \node[int] (j) at (1,.5) {};
    \node[int] (k) at (-1,.5) {};
    \node (v1) at (-1.5,-.2) {$\omega$};
    \node (v2) at (0,-.2) {$\omega$};
    \node (v3) at (1,-.2) {$\omega$};
    \node (v4) at (1.5,-.2) {$\omega$};
    \node (v5) at (-1,-.2) {$\omega$};
  \draw (i) edge (k) edge (v2)  edge[crossed] (j) (j) edge (v3) edge (v4)
  (k) edge (v5) edge (v1);
  \end{tikzpicture}
  \\&
  \quad\text{or}\quad 
  \begin{tikzpicture}
    \node[int] (i) at (0,.5) {};
    \node[int] (j) at (1,.5) {};
    \node (v1) at (-.5,-.2) {$\omega$};
    \node (v2) at (0,-.2) {$\omega$};
    \node (v3) at (1,-.2) {$\omega$};
    \node (v4) at (1.5,-.2) {$\omega$};
    \node (v5) at (.5,-.2) {$\omega$};
  \draw (i) edge[crossed] (v1) edge (v2) edge (j) (j) edge (v3) edge (v4) edge (v5);
  \end{tikzpicture}=(\cdots)
  \quad\text{or}\quad
  \\& 
  \begin{tikzpicture}
    \node[int] (i) at (0,.5) {};
    \node (v1) at (-.7,-.2) {$\omega$};
    \node (v2) at (-.2,-.2) {$\omega$};
    \node (v3) at (.2,-.2) {$\omega$};
    \node (v4) at (.7,-.2) {$\omega$};
    \node (v5) at (1.2,-.2) {$\omega$};
  \draw (i) edge[crossed] (v1) edge (v2) edge (v3) edge (v4) edge (v5);
  \end{tikzpicture}
  \quad\text{or}\quad 
  \begin{tikzpicture}
    \node[int] (i) at (0,.5) {};
    \node[int] (j) at (1,.5) {};
    \node (v1) at (-.5,-.2) {$\omega$};
    \node (v2) at (0,-.2) {$\omega$};
    \node (v3) at (1,-.2) {$\omega$};
    \node (v4) at (1.5,-.2) {$\omega$};
    \node (v5) at (.5,-.2) {$\omega$};
  \draw (i) edge[crossed] (v1) edge (v2) edge (v5) edge (j) (j) edge (v3) edge (v4);
  \end{tikzpicture}
  \\&
  \begin{tikzpicture}
    \node[int] (i) at (0,.5) {};
    \node[int] (j) at (1,.5) {};
    \node[int] (k) at (-1,.5) {};
    \node (v1) at (-1.5,-.2) {$\omega$};
    \node (v2) at (0,-.2) {$\omega$};
    \node (v3) at (1,-.2) {$\omega$};
    \node (v4) at (1.5,-.2) {$\omega$};
    \node (v5) at (-1,-.2) {$\omega$};
  \draw (i) edge (k) edge (v2)  edge (j) (j) edge (v3) edge (v4)
  (k) edge (v5) edge[crossed] (v1);
  \end{tikzpicture}=(\cdots)
  \quad\text{or}\quad
  \begin{tikzpicture}
    \node[int] (i) at (0,.5) {};
    \node[int] (j) at (1,.5) {};
    \node[int] (k) at (-1,.5) {};
    \node (v1) at (-1.5,-.2) {$\omega$};
    \node (v2) at (0,-.2) {$\omega$};
    \node (v3) at (1,-.2) {$\omega$};
    \node (v4) at (1.5,-.2) {$\omega$};
    \node (v5) at (-1,-.2) {$\omega$};
  \draw (i) edge (k) edge[crossed] (v2)  edge (j) (j) edge (v3) edge (v4)
  (k) edge (v5) edge (v1);
  \end{tikzpicture}
  \substack{\text{symm.} \\ =}0
\end{align*}
The first two graphs in the third line are zero in $\bGK_{g,n}^{12,1}$ because of rule (3) above. Additionally, some graphs can be expressed through other generators via the relations, which is written as $=(\cdots)$, see again Remark \ref{rem:graph vanishing}. 
We hence end up with an additional 8 generators with one blown-up component of excess 2.
Collecting all 20 generators, the differential is as follows:

\smallskip

\underline{Degree $25-n$:}
\begin{align*}
\Gamma_{i}^{(2)}:=
\scalebox{.6}{$\omis\tripods  
  \begin{tikzpicture}
    \node[int] (i) at (0,.5) {};
    \node[int] (j) at (1,.5) {};
    \node (v1) at (-.5,-.2) {$\omega$};
    \node (v2) at (0,-.2) {$i$};
    \node (v3) at (1,-.2) {$\omega$};
    \node (v4) at (1.5,-.2) {$\omega$};
  \draw (i) edge[crossed] (v1) edge (v2) edge (j) (j) edge (v3) edge (v4);
  \end{tikzpicture} $}
  &\mapsto 0
  \\
  \Gamma_{ij}^{(2)}:=
\scalebox{.6}{$\omis
\tripods
\begin{tikzpicture}
    \node[int] (i) at (0,.5) {};
    \node (v1) at (-.5,-.2) {$\omega$};
    \node (v2) at (0,-.2) {$i$};
    \node (v3) at (.5,-.2) {$j$};
  \draw (i) edge[crossed] (v1) edge (v2) edge (v3);
  \end{tikzpicture}
  $}
  &\mapsto 0
  \\
\Gamma^{(2)}_{j,irr}:=
\scalebox{.6}{$
  \begin{tikzpicture}
    \node (v) at (0,0) {$\omega$};
    \node (w) at (1,0) {$1$};
    \draw (v) edge (w);
  \end{tikzpicture}
  \cdots 
  \begin{tikzpicture}
    \node (v) at (0,0) {$\omega$};
    \node (w) at (1,0) {$n$};
    \draw (v) edge (w);
  \end{tikzpicture}
  \begin{tikzpicture}
    \node[int] (i) at (0,.5) {};
    \node (v1) at (-.5,-.2) {$\omega$};
    \node (v2) at (0,-.2) {$\omega$};
    \node (v3) at (.5,-.2) {$\omega$};
  \draw (i) edge (v1) edge (v2) edge (v3);
  \end{tikzpicture}
  \cdots 
  \begin{tikzpicture}
    \node[int] (i) at (0,.5) {};
    \node (v1) at (-.5,-.2) {$\omega$};
    \node (v2) at (0,-.2) {$\omega$};
    \node (v3) at (.5,-.2) {$\omega$};
  \draw (i) edge (v1) edge (v2) edge (v3);
  \end{tikzpicture}
  \begin{tikzpicture}
    \node[int] (i) at (0,.5) {};
    \node (v1) at (-.5,-.2) {$j$};
    \node (v2) at (0,-.2) {$\omega$};
    \node (v3) at (.5,-.2) {$\omega$};
  \draw (i) edge (v1) edge (v2) edge (v3);
  \end{tikzpicture}
  \begin{tikzpicture}
    \node[int] (i) at (0,.5) {};
    \node (v2) at (0,-.2) {$\omega$};
  \draw (i) edge[crossed, loop] (i) edge (v2) ;
  \end{tikzpicture}
  $}
  &\mapsto 0
\\
\Gamma_{birr}^{(2)}:=
\scalebox{.6}{$\omis\tripods 
\begin{tikzpicture}
  \node[int] (i) at (0,.5) {};
  \node[int] (j) at (1,.5) {};
  \node (v1) at (-.5,-.2) {$\omega$};
  \node (v2) at (0,-.2) {$\omega$};
  \node (v3) at (1,-.2) {$\omega$};
  \node (v4) at (1.5,-.2) {$\omega$};
\draw (i) edge (v1) edge (v2) edge (j) (j) edge (v3) edge (v4);
\end{tikzpicture}
\begin{tikzpicture}
        \node[int] (i) at (0,.5) {};
        \node (v2) at (0,-.2) {$\omega$};
      \draw (i) edge[crossed, loop] (i) edge (v2) ;
\end{tikzpicture}
$}&\mapsto 0
\\
\Gamma_{b\bar b}^{(2)}:=
\scalebox{.6}{$\omis\tripods 
\begin{tikzpicture}
  \node[int] (i) at (0,.5) {};
  \node[int] (j) at (1,.5) {};
  \node (v1) at (-.5,-.2) {$\omega$};
  \node (v2) at (0,-.2) {$\omega$};
  \node (v3) at (1,-.2) {$\omega$};
  \node (v4) at (1.5,-.2) {$\omega$};
\draw (i) edge (v1) edge (v2) edge (j) (j) edge (v3) edge (v4);
\end{tikzpicture}
\begin{tikzpicture}
        \node[int] (i) at (0,.5) {};
        \node[int] (j) at (1,.5) {};
        \node (v1) at (-.5,-.2) {$\omega$};
        \node (v2) at (0,-.2) {$\omega$};
        \node (v3) at (1,-.2) {$\omega$};
        \node (v4) at (1.5,-.2) {$\omega$};
      \draw (i) edge (v1) edge (v2) edge[crossed] (j) (j) edge (v3) edge (v4);
      \end{tikzpicture}
$}
&\mapsto 0
\\
\Gamma_{j\bar b}^{(2)}:=
\scalebox{.6}{$\omis\tripods 
\begin{tikzpicture}
  \node[int] (i) at (0,.5) {};
  \node (v1) at (-.5,-.2) {$j$};
  \node (v2) at (0,-.2) {$\omega$};
  \node (v3) at (.5,-.2) {$\omega$};
\draw (i) edge (v1) edge (v2) edge (v3);
\end{tikzpicture}
  \begin{tikzpicture}
    \node[int] (i) at (0,.5) {};
    \node[int] (j) at (1,.5) {};
    \node (v1) at (-.5,-.2) {$\omega$};
    \node (v2) at (0,-.2) {$\omega$};
    \node (v3) at (1,-.2) {$\omega$};
    \node (v4) at (1.5,-.2) {$\omega$};
  \draw (i) edge (v1) edge (v2) edge[crossed] (j) (j) edge (v3) edge (v4);
  \end{tikzpicture}
$}
&\mapsto 0
\\
  \Gamma_{\bar bi}^{(2)}:=
\scalebox{.6}{$\omis\tripods 
  \begin{tikzpicture}
    \node[int] (i) at (0,.5) {};
    \node[int] (j) at (1,.5) {};
    \node (v1) at (-.5,-.2) {$\omega$};
    \node (v2) at (0,-.2) {$\omega$};
    \node (v3) at (1,-.2) {$\omega$};
    \node (v4) at (1.5,-.2) {$i$};
  \draw (i) edge (v1) edge (v2) edge[crossed] (j) (j) edge (v3) edge (v4);
  \end{tikzpicture}$}
  &\mapsto 0
  \\
  \Gamma_{\bar b\bar b}^{(2)}:= \scalebox{.6}{$\omis\tripods  
  \begin{tikzpicture}
    \node[int] (i) at (0,.5) {};
    \node[int] (j) at (1,.5) {};
    \node[int] (k) at (-1,.5) {};
    \node (v1) at (-1.5,-.2) {$\omega$};
    \node (v2) at (0,-.2) {$\omega$};
    \node (v3) at (1,-.2) {$\omega$};
    \node (v4) at (1.5,-.2) {$\omega$};
    \node (v5) at (-1,-.2) {$\omega$};
  \draw (i) edge (k) edge (v2)  edge[crossed] (j) (j) edge (v3) edge (v4)
  (k) edge (v5) edge (v1);
  \end{tikzpicture}$}
  &\mapsto 0
\end{align*}

Note that the generators $\Gamma_{ij}^{(2)}$, $\Gamma_{i}^{(2)}$ are not independent; there are relations from the weight 13 relations, depending on the genus.
If $g=1$, only $\Gamma_{ij}^{(2)}$ contributes, and modulo relations the contribution is the irreducible representation $V_{21^{n-2}}$.
If $g\geq 3$ and $n\geq 2$, the $\Gamma^{(2)}_{i}$ are linearly independent, the $\Gamma_{ij}^{(2)}$ can be expressed through the $\Gamma^{(2)}_{i}$, and the overall $\ss_n$-representation is $V_{1^n} \oplus V_{21^{n-2}}$.

\smallskip

\underline{Degree $24-n$:}
\begin{align*}
\Gamma_{4irr}^{(2)}:= \scalebox{.6}{$\omis\tripods 
\begin{tikzpicture}
  \node[int] (i) at (0,.5) {};
  \node (v1) at (-.6,-.2) {$\omega$};
  \node (v2) at (-.2,-.2) {$\omega$};
  \node (v3) at (.2,-.2) {$\omega$};
  \node (v4) at (.6,-.2) {$\omega$};
\draw (i) edge (v1) edge (v2) edge (v3) edge (v4);
\end{tikzpicture}
\begin{tikzpicture}
        \node[int] (i) at (0,.5) {};
        \node (v2) at (0,-.2) {$\omega$};
      \draw (i) edge[crossed, loop] (i) edge (v2) ;
\end{tikzpicture}
$}
&\mapsto 3
\Gamma_{birr}^{(2)}
\\
\Gamma_{4\bar b}^{(2)}:=\scalebox{.6}{$\omis\tripods 
\begin{tikzpicture}
  \node[int] (i) at (0,.5) {};
  \node (v1) at (-.6,-.2) {$\omega$};
  \node (v2) at (-.2,-.2) {$\omega$};
  \node (v3) at (.2,-.2) {$\omega$};
  \node (v4) at (.6,-.2) {$\omega$};
\draw (i) edge (v1) edge (v2) edge (v3) edge (v4);
\end{tikzpicture}
\begin{tikzpicture}
        \node[int] (i) at (0,.5) {};
        \node[int] (j) at (1,.5) {};
        \node (v1) at (-.5,-.2) {$\omega$};
        \node (v2) at (0,-.2) {$\omega$};
        \node (v3) at (1,-.2) {$\omega$};
        \node (v4) at (1.5,-.2) {$\omega$};
      \draw (i) edge (v1) edge (v2) edge[crossed] (j) (j) edge (v3) edge (v4);
      \end{tikzpicture}
$}
&\mapsto 3
\Gamma_{b\bar b}^{(2)}
\\
\Gamma_{j\bar 4}^{(2)}:=\scalebox{.6}{$\omis\tripods 
\begin{tikzpicture}
  \node[int] (i) at (0,.5) {};
  \node (v1) at (-.5,-.2) {$j$};
  \node (v2) at (0,-.2) {$\omega$};
  \node (v3) at (.5,-.2) {$\omega$};
\draw (i) edge (v1) edge (v2) edge (v3);
\end{tikzpicture}
  \begin{tikzpicture}
    \node[int] (i) at (0,.5) {};
    \node (v1) at (-.7,-.2) {$\omega$};
    \node (v2) at (-.2,-.2) {$\omega$};
    \node (v3) at (.2,-.2) {$\omega$};
    \node (v4) at (.7,-.2) {$\omega$};
  \draw (i) edge[crossed] (v1) edge (v2) edge (v3) edge (v4);
  \end{tikzpicture}
$}
&\mapsto -
\Gamma_{j\bar b}^{(2)}+ (const) \Gamma_{j,irr}^{(2)}
\\
\Gamma^{(2)}_{\epsilon,irr}:=
\scalebox{.6}{$
  \begin{tikzpicture}
    \node (v) at (0,0) {$\omega$};
    \node (w) at (1,0) {$1$};
    \draw (v) edge (w);
  \end{tikzpicture}
  \cdots 
  \begin{tikzpicture}
    \node (v) at (0,0) {$\omega$};
    \node (w) at (1,0) {$n$};
    \draw (v) edge (w);
  \end{tikzpicture}
  \begin{tikzpicture}
    \node[int] (i) at (0,.5) {};
    \node (v1) at (-.5,-.2) {$\omega$};
    \node (v2) at (0,-.2) {$\omega$};
    \node (v3) at (.5,-.2) {$\omega$};
  \draw (i) edge (v1) edge (v2) edge (v3);
  \end{tikzpicture}
  \cdots 
  \begin{tikzpicture}
    \node[int] (i) at (0,.5) {};
    \node (v1) at (-.5,-.2) {$\omega$};
    \node (v2) at (0,-.2) {$\omega$};
    \node (v3) at (.5,-.2) {$\omega$};
  \draw (i) edge (v1) edge (v2) edge (v3);
  \end{tikzpicture}
  \begin{tikzpicture}
    \node (v) at (0,0) {$\omega$};
    \node (w) at (1,0) {$\epsilon$};
    \draw (v) edge (w);
  \end{tikzpicture}
  \begin{tikzpicture}
    \node[int] (i) at (0,.5) {};
    \node (v2) at (0,-.2) {$\omega$};
  \draw (i) edge[crossed, loop] (i) edge (v2) ;
  \end{tikzpicture}
  $}
  &\mapsto  \sum_j \pm \Gamma_{j,irr}^{(2)} +(const) \Gamma_{birr}^{(2)}
\\
\Gamma_{\bar B}^{(2)}:=
\scalebox{.6}{$\omis
\tripods
\begin{tikzpicture}
    \node[int] (i) at (0,.5) {};
    \node[int] (j) at (1,.5) {};
    \node (v1) at (-.5,-.2) {$\omega$};
    \node (v2) at (0,-.2) {$\omega$};
    \node (v3) at (1,-.2) {$\omega$};
    \node (v4) at (1.5,-.2) {$\omega$};
    \node (v5) at (.5,-.2) {$\omega$};
  \draw (i) edge (v1) edge (v2) edge (v5) edge[crossed] (j) (j) edge (v3) edge (v4);
  \end{tikzpicture}
  $}
  &\mapsto 2
\Gamma_{\bar b\bar b}^{(2)}
  \\
  \Gamma_{\bar 4i}^{(2)}:=
  \scalebox{.6}{$\omis\tripods  
      \begin{tikzpicture}
    \node[int] (i) at (0,.5) {};
    \node (v1) at (-.7,-.2) {$\omega$};
    \node (v2) at (-.2,-.2) {$\omega$};
    \node (v3) at (.2,-.2) {$\omega$};
    \node (v4) at (.7,-.2) {$i$};
  \draw (i) edge[crossed] (v1) edge (v2) edge (v3) edge (v4);
  \end{tikzpicture} $}
  &\mapsto 
  - \Gamma_{\bar bi}^{(2)} + (const) \Gamma_{i irr}^{(2)}
  \\
  \Gamma_{\bar B'}^{(2)}:=
  \scalebox{.6}{$\omis\tripods  
    \begin{tikzpicture}
    \node[int] (i) at (0,.5) {};
    \node[int] (j) at (1,.5) {};
    \node (v1) at (-.5,-.2) {$\omega$};
    \node (v2) at (0,-.2) {$\omega$};
    \node (v3) at (1,-.2) {$\omega$};
    \node (v4) at (1.5,-.2) {$\omega$};
    \node (v5) at (.5,-.2) {$\omega$};
  \draw (i) edge[crossed] (v1) edge (v2) edge (v5) edge (j) (j) edge (v3) edge (v4);
  \end{tikzpicture} $}
  &\mapsto
  \pm \Gamma_{\bar b\bar b}^{(2)}+(const) \Gamma_{birr}^{(2)}
  \\
  \Gamma_{b\bar 4}^{(2)}:=
  \scalebox{.6}{$\omis\tripods 
\begin{tikzpicture}
  \node[int] (i) at (0,.5) {};
  \node[int] (j) at (1,.5) {};
  \node (v1) at (-.5,-.2) {$\omega$};
  \node (v2) at (0,-.2) {$\omega$};
  \node (v3) at (1,-.2) {$\omega$};
  \node (v4) at (1.5,-.2) {$\omega$};
\draw (i) edge (v1) edge (v2) edge (j) (j) edge (v3) edge (v4);
\end{tikzpicture}
  \begin{tikzpicture}
    \node[int] (i) at (0,.5) {};
    \node (v1) at (-.7,-.2) {$\omega$};
    \node (v2) at (-.2,-.2) {$\omega$};
    \node (v3) at (.2,-.2) {$\omega$};
    \node (v4) at (.7,-.2) {$\omega$};
  \draw (i) edge[crossed] (v1) edge (v2) edge (v3) edge (v4);
  \end{tikzpicture}
$}
&\mapsto - \Gamma_{b\bar b}^{(2)} + (const) \Gamma_{birr}^{(2)}
\\
\Gamma_{\epsilon\bar b}^{(2)}
:=
\scalebox{.6}{$\omis\tripods 
  \begin{tikzpicture}
    \node (v) at (0,0) {$\omega$};
    \node (w) at (1,0) {$\epsilon$};
    \draw (v) edge (w);
  \end{tikzpicture}
  \begin{tikzpicture}
    \node[int] (i) at (0,.5) {};
    \node[int] (j) at (1,.5) {};
    \node (v1) at (-.5,-.2) {$\omega$};
    \node (v2) at (0,-.2) {$\omega$};
    \node (v3) at (1,-.2) {$\omega$};
    \node (v4) at (1.5,-.2) {$\omega$};
  \draw (i) edge (v1) edge (v2) edge[crossed] (j) (j) edge (v3) edge (v4);
  \end{tikzpicture}
$}
&\mapsto \sum_j \pm \Gamma_{j\bar b}^{(2)} + (const) \Gamma_{b\bar b}^{(2)}
\pm 4 \Gamma_{\bar b\bar b}^{(2)}
\end{align*}

\smallskip

\underline{Degree $23-n$:}
\[
\resizebox{.95\hsize}{!}{
$\begin{aligned}
\Gamma_{4\bar 4}^{(2)}:=\scalebox{.6}{$\omis\tripods 
\begin{tikzpicture}
  \node[int] (i) at (0,.5) {};
  \node (v1) at (-.6,-.2) {$\omega$};
  \node (v2) at (-.2,-.2) {$\omega$};
  \node (v3) at (.2,-.2) {$\omega$};
  \node (v4) at (.6,-.2) {$\omega$};
\draw (i) edge (v1) edge (v2) edge (v3) edge (v4);
\end{tikzpicture}
  \begin{tikzpicture}
    \node[int] (i) at (0,.5) {};
    \node (v1) at (-.7,-.2) {$\omega$};
    \node (v2) at (-.2,-.2) {$\omega$};
    \node (v3) at (.2,-.2) {$\omega$};
    \node (v4) at (.7,-.2) {$\omega$};
  \draw (i) edge[crossed] (v1) edge (v2) edge (v3) edge (v4);
  \end{tikzpicture}
$}
&\mapsto 3 \Gamma_{b\bar 4}^{(2)} + (const) \Gamma_{4b}^{(2)} + (const) \Gamma_{4irr}^{(2)}
\\
\Gamma^{(2)}_{\epsilon\bar 4}:=
\scalebox{.6}{$\omis\tripods 
  \begin{tikzpicture}
    \node (v) at (0,0) {$\omega$};
    \node (w) at (1,0) {$\epsilon$};
    \draw (v) edge (w);
  \end{tikzpicture}
  \begin{tikzpicture}
    \node[int] (i) at (0,.5) {};
    \node (v1) at (-.7,-.2) {$\omega$};
    \node (v2) at (-.2,-.2) {$\omega$};
    \node (v3) at (.2,-.2) {$\omega$};
    \node (v4) at (.7,-.2) {$\omega$};
  \draw (i) edge[crossed] (v1) edge (v2) edge (v3) edge (v4);
  \end{tikzpicture}
$}
&\mapsto - \Gamma_{\epsilon \bar b}^{(2)} + \sum_j \pm  \Gamma_{j\bar 4}^{(2)} 
+ (const) \Gamma_{b\bar 4}^{(2)} 
\pm \Gamma_{\bar B}^{(2)} \pm 3 \Gamma_{\bar B'}^{(2)}
\\
\Gamma_{\bar 5}^{(2)} := \scalebox{.6}{$\omis\tripods  
\begin{tikzpicture}
    \node[int] (i) at (0,.5) {};
    \node (v1) at (-.7,-.2) {$\omega$};
    \node (v2) at (-.2,-.2) {$\omega$};
    \node (v3) at (.2,-.2) {$\omega$};
    \node (v4) at (.7,-.2) {$\omega$};
    \node (v5) at (1.2,-.2) {$\omega$};
  \draw (i) edge[crossed] (v1) edge (v2) edge (v3) edge (v4) edge (v5);
  \end{tikzpicture} $}
  &\mapsto 6 \Gamma_{\bar B'}^{(2)} + (const) \Gamma_{4irr}^{(2)} + (const) \Gamma_{\bar B}^{(2)}
\end{aligned}$}
\]

We can now compute the cohomology of $\myB_{g,n}$ for $3g+2n=27$. The complex is concentrated in degrees $23-n$, $24-n$, $25-n$. Note that some generators exist only for higher genus or higher $n$. There is no cocycle of degree $23-n$, since the image of the differential is of full dimension. This is seen already by looking only at the leading terms $\Gamma_{b\bar 4}^{(2)}$, $\Gamma_{\epsilon \bar b}^{(2)}$, $\Gamma_{\bar B'}^{(2)}$.
Note that these generators only exist for $g$ large enough, i.e., $g\geq 4$ or $g\geq 7$ respectively. But if they do not (because $g$ is too small), the same holds for the corresponding generators of degree $23-n$.
Hence we conclude that $H^{23-n}(\myB_{g,n})=0$ for all $g,n$ considered.

Next we consider cocycles $x$ of degree $24-n$.
The general cocycle is a linear combination of the $9$ generators in degree $24-n$. By adding an exact term to $x$ we may however assume that this linear combination does not involve $\Gamma_{b\bar 4}^{(2)}$, $\Gamma_{\epsilon \bar b}^{(2)}$, $\Gamma_{\bar B'}^{(2)}$.
Assume first that $n\geq 1$. Then we claim that no linear combination of the remaining generators can be closed. This is so because the image of the generators under the differential is already of full rank if projected to the subspace spanned by the ``leading terms" $\Gamma_{birr}^{(2)}$, $\Gamma_{b\bar b}^{(2)}$,
$\Gamma_{j\bar b}^{(2)}$,
$\Gamma_{j,irr}^{(2)}$,
$\Gamma_{\bar b\bar b}^{(2)}$,
$\Gamma_{\bar bi}^{(2)}$.
Hence we have $H^{24-n}(\myB_{g,n})=0$ for $n\geq 1$.
In the special case $n=0$, i.e., $g=9$, the generator $\Gamma_{j,irr}^{(2)}$ of degree $25$ does not exist, while $\Gamma_{\epsilon,irr}^{(2)}$ does exist.
Hence a linear combination of $\Gamma_{\epsilon,irr}^{(2)}$ and $\Gamma_{4irr}^{(2)}$ is a cocycle, so that $H^{24}(\myB_{9,0})$ is one-dimensional.

Finally, we consider degree $25-n$. 
Any element $x$ of that degree is a cocyle. 
By adding exact terms we may ensure that $x$ does not involve the generators 
$\Gamma_{birr}^{(2)}$,
$\Gamma_{b\bar b}^{(2)}$,
$\Gamma_{j\bar b}^{(2)}$,
$\Gamma_{\bar b\bar b}^{(2)}$,
$\Gamma_{\bar bi}^{(2)}$,
and in addition we have to mod out the linear combination $\sum_j\pm \Gamma_{j,irr}^{(2)}$ (for $g>1$, $n>0$), that accounts for one copy of the sign representation $V_{1^n}$ of $\ss_n$.
Our $x$ can hence be a linear combination of the generators $\Gamma_i^{(2)}$,
$\Gamma_{j,irr}^{(2)}$ and $\Gamma_{ij}^{(2)}$.
The generators $\Gamma_i^{(2)}$ exist for $g\geq 3$ and $n\geq 1$ and contribute the representation $V_{1^n} \oplus V_{21^{n-2}}$ of $\bbS_n$.
As explained above the generators $\Gamma_{j,irr}^{(2)}$ and $\Gamma_{ij}^{(2)}$ in total contribute an $\ss_n$ representation $V_{21^{n-2}}$ if $g=1$, and a representation $V_{1^n}\oplus V_{21^{n-2}}$ for $g>1$, $n>0$. As mentioned above, in the case $g>1$ we have to remove the $V_{1^n}$ again (since it is in the image of the differential).
Hence we arrive at the following cohomology table:

\begin{align*}
  H^k(\myB_{1,12}) &=
  \begin{cases}
      V_{21^{10}} & \text{for $k=13$} \\
      0 & \text{otherwise}
  \end{cases}
  &
  H^k(\myB_{3,9}) &=
  \scalebox{.95}{$\begin{cases}
      V_{1^{9}}\oplus V_{21^7}\oplus V_{21^7} & \text{for $k=16$} \\
      0 & \text{otherwise}
  \end{cases}$
  }
  \\
  H^k(\myB_{5,6}) &=
  \begin{cases}
    V_{1^{6}}\oplus V_{21^4}\oplus V_{21^4} & \text{for $k=19$} \!\! \\
    0 & \text{otherwise}
\end{cases}
  &
  H^k(\myB_{7,3}) &=
  \begin{cases}
    V_{1^{3}}\oplus V_{21}\oplus V_{21} & \text{for $k=22$} \\
    0 & \text{otherwise}
\end{cases}
  \\
  H^k(\myB_{9,0}) &=
  \begin{cases}
    \mathbb C & \text{for $k=24$} \\
    0 & \text{otherwise.}
  \end{cases}
 \end{align*}

\noindent This completes the proof of Theorem~\ref{thm:lowexc13}.

\section{The \texorpdfstring{$\mathsf{S}_{16}$}{S16}-isotypic part of \texorpdfstring{$H^{15}(\Mb_{g,n})$}{H15}}

In this section, we prove Theorem \ref{thm:H15}, describing the $\mathsf{S}_{16}$-isotypic part of $H^{15}(\Mb_{g,n})$. By \cite[Theorem 1.1]{CLP-STE}, the $\mathsf{S}_{16}$-isotypic part of $H^{15}(\Mb_{g,n})$ vanishes for $g\geq 2$. Moreover, $H^{15}(\Mb_{0,n})=0$ for all $n$. It remains to consider the case $g=1$.
\begin{proof}[Proof of Theorem \ref{thm:H15}]
By the argument in \cite[Section~4.1]{CLPW}, it suffices to work in the category of Hodge structures. We have a right exact sequence of Hodge structures
\[ H^{13}(\tilde{\partial \M_{1,n}}) \xrightarrow{\alpha} H^{15}(\Mb_{1,n}) \rightarrow W_{15}H^{15}(\M_{1,n})\rightarrow 0,
\]
where the first morphism $\alpha$ is of Hodge type $(1,1)$. The image of $\alpha$ is therefore disjoint from $H^{15,0}(\Mb_{1,n})\oplus H^{0,15}(\Mb_{1,n})$. Because $\mathsf{S}_{16}\otimes \mathbb{C}$ is of Hodge type $(15,0), (0,15)$, it follows that the $\mathsf{S}_{16}$-isotypic parts of $W_{15}H^{15}(\M_{1,n})$ and $H^{15}(\Mb_{1,n})$ agree. By \cite[Proposition 2.2]{CLP-STE}, 
$W_{15}H^{15}(\M_{1,n})\cong K^{15}_n\otimes \mathsf{S}_{16}$.
\end{proof}
\begin{rem}
    Essentially the same argument shows that the $\mathsf{S}_{k+1}$-isotypic part of $H^{k}(\Mb_{1,n})$ is $K^k_n \otimes \s_{k+1}$. For odd $k \geq 17$ and $g\geq 2$, the $\mathsf{S}_{k+1}$-isotypic part of $H^k(\Mb_{g,n})$ is not well understood. For example, it is known that the $\mathsf{S}_{18}$-isotypic part of $H^{17}(\Mb_{2,14})$ is nontrivial in the category of $\ell$-adic Galois representations, by \cite{Petersenlocalsystems}. On the automorphic side of the Langlands correspondence, this corresponds to a Saito--Kurokawa lift of the cusp form $\omega$ of level 1 and weight 18 from $\mathrm{SL}_2(\ZZ)$ to $\Sp_4(\ZZ)$, via relations between the cohomology of $\M_{2,n}$ and that of local systems on $\mathcal{A}_2$.
    This cusp form $\omega$ can also be used to construct an explicit holomorphic $17$-form on $\Mb_{2,14}$ \cite[Section 3.5]{FPhandbook}, and hence the real Hodge structure $\mathsf{S}_{18} \otimes \RR$ appears in $H^{17}(\MM_{2,14})$. 
    However, it is an open problem to show that the rational Hodge structure $\mathsf{S}_{18}$ appears in the cohomology of $\MM_{2,14}$. Moreover, it is unknown (in both Hodge structures and $\ell$-adic Galois representations) whether the $\mathsf{S}_{18}$-isotypic part of $H^{17}(\Mb_{g,n})$ is non-zero in any case where $g\geq 3$.
\end{rem}

For subsequent calculations, we write an explicit basis for $H^{15,0}(\Mb_{1,n})$. A weight $16$ cusp form $\omega_{15}$ for $\SL_2(\zz)$ is a generator for $H^{15,0}(\Mb_{1,15})$, on which $\ss_{15}$ acts by the sign representation \cite{BergstromData}. Let $A\subset \{1,\dots,n\}$ be an ordered subset of size $15$. For each such $A$, we have a forgetful map
\[
f_{A}\colon \Mb_{1,n}\rightarrow \Mb_{1,A},
\]
and we define $\omega_{A}:=f_{A}^*\omega_{15}$. Following exactly the same argument as in \cite[Section 2.3]{CanningLarsonPayne}, one can show that the set of forms
$\{\omega_{A}: 1\in A, A \text{ increasing}\}$ forms a basis for $H^{15,0}(\Mb_{1,n})$. To each such subset $A$, we associate the tableau $T_A$ of shape $(n-14,1^{14})$ with symbols of $A$ in order down the first column and the rest of the first row filled in increasing order. The isomorphism $H^{15,0}(\Mb_{1,n})\rightarrow V_{(n-14)1^{14}}= K_n^{15}$ sends $\omega_{A}$ to the Specht module generator associated to $T_A$.

\section{Graph complexes for \texorpdfstring{$\M_{g,n}$}{Mgn} in type (15,0)} \label{sec:weight15}

In this section, we briefly describe 
how, in a manner similar to \cite{PayneWillwacher24},
one can construct complexes $B_{g,n}^{15}$ and $C_{g,n}^{15}$ whose cohomology computes the type $(15,0)$ part of $H^{*}_c(\M_{g,n})$.
The only difference between the complexes we describe here and the complexes in \cite{PayneWillwacher24}
is that the truncation happens at 15 marked half-edges.

\subsection{Graph complexes }
As discussed in Section~\ref{pics}, above, the cohomology operad $H(\MM)$ carries a pure Hodge structure that induces a decomposition of the Feynman transform
\[
\GK_{g,n}^k \otimes \cc \cong \bigoplus_{p+q = k} \GK_{g,n}^{p,q}.
\]
The type $(15,0)$ part $\GK^{15,0}_{g,n}$ computes the corresponding part of the weight-graded compactly supported cohomology of $\M_{g,n}$
\[
H^*(\GK^{15,0}_{g,n}) \cong \gr_{15,0} H^*_c(\M_{g,n}):= \gr_F^{15}\gr^W_{15} H^*_c(\M_{g,n},\cc).
\]
Here, $F$ denotes the Hodge filtration in the mixed Hodge structure. In other words, the graph complex $\GK^{15,0}_{g,n}$ computes the type $(15,0)$ part of the pure Hodge structure $\gr_{15}^W H^*_c(\M_{g,n})$.
The generators of $\GK^{15,0}_{g,n}$ are stable graphs with $n$ external legs, with one special vertex $v$ of genus 1 decorated by an element of $H^{15,0}(\MM_{1,n_v})$. All other vertices $w$ are decorated by an element of $H^0(\MM_{g_w,n_w})$, and since this vector space is one-dimensional, we will tacitly assume that they are decorated by 1 and omit the decoration from further discussion.
Recall the explicit description of $H^{15,0}(\MM_{1,n_v})$ of Theorem \ref{thm:H15}.
Graphically, we may denote the decoration $\omega_A\in H^{15,0}(\MM_{1,n_v})$ by marking the 15 half-edges in $A$ by arrows.
\[
\begin{tikzpicture}
    \node[ext] (v) at (0,0) {};
    \node at (.5, .1) {$\scriptscriptstyle \vdots$};
    \draw (v) edge +(-.5,-.5) edge +(-.5,0) edge +(-.5,+.5) 
    edge[->-] +(.5,+.5) edge[->-] +(.5,+.3) edge[->-] +(.5,-.3) edge[->-] +(.5,-.5)
    ;
    \draw [decorate,decoration={brace,amplitude=5pt}]
  (.8,.5) -- (.8,-.5) node[midway,xshift=1em]{$A$};
\end{tikzpicture}
\]
Furthermore, we define the graded subspace
\[
I_{g,n}^{15,0}\subset \GK^{15,0}_{g,n}
\]
spanned by graphs that have either a self-loop at a weight 0 vertex or a weight 0 vertex of positive genus. The subspace $I_{g,n}^{15,0}$ is evidently closed under the differential, $dI_{g,n}^{15,0}\subset I_{g,n}^{15,0}$, so that we may consider the quotient complex 
\[
\bGK^{15,0}_{g,n} := \GK^{15,0}_{g,n}/ I_{g,n}^{15,0}.
\]
\begin{lem}
    The projection to the quotient $\GK^{15,0}_{g,n}\to \bGK^{15,0}_{g,n}$ is a quasi-isomorphism.
\end{lem}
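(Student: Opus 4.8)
The plan is to imitate the proof of Proposition~\ref{prop:bGK}: equip both $\GK^{15,0}_{g,n}$ and $\bGK^{15,0}_{g,n}$ with the descending filtration by the number of vertices of the generating graphs. Since $\GK^{15,0}_{g,n}$ is finite dimensional both filtrations are bounded, so the associated spectral sequences converge to the respective cohomologies, and it suffices to show that the projection $\pi$ induces an isomorphism on $E_1$-pages. The point I expect to be worth emphasizing is that, unlike the weight $13$ case, \emph{no} nested spectral sequence will be needed here.

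First I would analyze the $E_0$-differentials. On $\GK^{15,0}_{g,n}$ the term $d_\vartheta$ strictly raises the vertex count, so the associated graded differential is $d_\xi$, whereas on $\bGK^{15,0}_{g,n}$ the induced differential vanishes by construction: $d_\xi$ creates a self-loop, which at a weight $0$ vertex lands in $I^{15,0}_{g,n}$ and at the decorated genus $1$ vertex produces a class pulled back from $H^{15,0}(\MM_{0,\ast})=0$. Hence $E_1(\bGK^{15,0}_{g,n})=\bGK^{15,0}_{g,n}$ as a bigraded vector space. Next, exactly as in the proof of Proposition~\ref{prop:bGK}, I would identify $(\GK^{15,0}_{g,n},d_\xi)$ with a direct sum, over stable graphs $\Gamma$ without self-loops having one weight $15$ vertex $v_0$ and all other vertices of weight $0$, of the $\Aut(\Gamma)$-coinvariants of
\[
V^{15,0}_{g_{v_0},n_{v_0}}\otimes\bigotimes_{w\neq v_0}V^0_{g_w,n_w}\otimes\Q[-1]^{\otimes|E(\Gamma)|}.
\]
Because cohomology commutes with finite coinvariants, tensor products and direct sums, the $E_1$-page is obtained by replacing each one-vertex complex by its cohomology. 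Here I would invoke the known fact that $H^*(V^0_{g_w,n_w})$ is $\Q$ in degree $0$ for $g_w=0$, $n_w\geq 3$ and vanishes otherwise, together with the observation that $V^{15,0}_{m,n}$ is a one-term complex concentrated in a single degree: since $H^{15,0}(\MM_{h,\ast})=0$ for $h\neq 1$ (the only weight $15$, type $(15,0)$ irreducible motive is $\s_{16}$, which occurs only in genus $1$), the sole nonzero term is $H^{15,0}(\MM_{1,n+2(m-1)})_{\ss_2\wr\ss_{m-1}}$, so its cohomology is that term.

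Comparing the resulting $E_1$-page with the definition of $I^{15,0}_{g,n}$, the graphs that survive are precisely those whose weight $0$ vertices all have genus $0$ and carry no self-loops (hence are automatically at least trivalent by stability), with the weight $15$ vertex carrying an arbitrary decoration from $H^{15,0}(\MM_{1,n_{v_0}})$ and its self-loops absorbed into the coinvariants just as in $\bGK^{15,0}_{g,n}$. This is exactly a presentation of $\bGK^{15,0}_{g,n}$, so $\pi$ induces an isomorphism on $E_1$-pages and is therefore a quasi-isomorphism. The one step requiring a little care — and the place where, in principle, a nested spectral sequence could be forced — is checking that $H^*(V^0_{g,n})$ matches the graphs killed in $I^{15,0}_{g,n}$ \emph{exactly}; in weight $13$ the cohomology of $V^2_{1,m}$ did not match the $\bGK^{13}$ conventions, whereas here the two agree, so no correction is needed. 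Verifying that the identification respects the $\Aut(\Gamma)$-actions and the Specht-module relations among the decorations $\omega_A$ is then routine, following \cite{PayneWillwacher21}.
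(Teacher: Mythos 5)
Your argument is correct and is essentially the proof the paper invokes: the paper's own proof is a one-line reference to \cite[Proposition 3.2]{PayneWillwacher24}, whose content is exactly the vertex-count filtration and one-vertex-complex computation you spell out (and which also underlies the paper's Proposition~\ref{prop:bGK}). Your observation that the $E_1$-pages match on the nose here—so that, unlike the weight $13$ case, no nested spectral sequence correcting for a stray $\delta_{irr}$-type class is needed—is accurate.
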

\begin{proof}
    The proof is identical to that of  \cite[Proposition 3.2]{PayneWillwacher24}. 
\end{proof}

To go further, recall from \cite[Section 3.1]{PayneWillwacher24} the definition of the auxiliary graph complex $X_{g,n}$. 
Generators of $X_{g,n}$ are graphs with one special vertex with an arbitrary subset of marked incident half-edges.
The differential is given by splitting vertices, and removing a half-edge from the set of marked half-edges. The complex $X_{g,n}$ is acyclic, i.e., $H(X_{g,n})=0$, see \cite[Lemma 3.4]{PayneWillwacher24}.
We define the subcomplex 
$$
X_{g,n}^{\leq r} \subset X_{g,n}
$$
spanned by graphs such that the number of marked half-edges is $\leq r$.
We then define 
\begin{align}
C_{g,n}^{15} &:= X_{g,n}^{\leq 14}[-29] \\
B_{g,n}^{15} &:= \bigg(X_{g,n} / X_{g,n}^{\leq 14}\bigg)[-30].
\end{align}
By acyclicity of $X_{g,n}$ we have that 
\begin{equation}\label{equ:HB HC iso}
H(B_{g,n}^{15}) \cong H(C_{g,n}^{15}),
\end{equation}
and the isomorphism is the connecting homomorphism in the natural long exact cohomology sequence associated to the short exact sequence 
\begin{equation}\label{equ:B C short exact}
0\to X_{g,n}^{\leq 14} \to X_{g,n} \to X_{g,n}/X_{g,n}^{\leq 14}\to 0.
\end{equation}
There is a natural morphism of dg vector spaces 
\[
B_{g,n}^{15} \to \bGK_{g,n}^{15,0}
\]
defined on generators by sending a graph to itself. See \cite[Definition 3.5ff.]{PayneWillwacher24} for the analogous construction in weight 11, which differs only in the replacement of the truncation constant 14 by 10.

\begin{prop}
The morphism $B_{g,n}^{15}\to \bGK_{g,n}^{15,0}$ is a quasi-isomorphism. In particular, 
\[
\gr_{15,0} H_c^*(\M_{g,n}) \cong H^*(B_{g,n}^{15}) \cong H^*(C_{g,n}^{15}).
\]
\end{prop}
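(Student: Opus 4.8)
The plan is to transcribe the weight-$11$ argument of \cite[Section~3]{PayneWillwacher24}, with the truncation constant $10$ replaced by $14$ and the weight-$12$ cusp form replaced by the weight-$16$ cusp form $\omega_{15}$. The isomorphism $H^*(B_{g,n}^{15})\cong H^*(C_{g,n}^{15})$ is already in hand: it is \eqref{equ:HB HC iso}, the connecting isomorphism in the long exact sequence of \eqref{equ:B C short exact}, which is an isomorphism because $X_{g,n}$ is acyclic. Together with the preceding lemma, $\gr_{15,0}H_c^*(\M_{g,n})\cong H^*(\bGK_{g,n}^{15,0})$, the only remaining point is that the map $\phi\colon B_{g,n}^{15}\to\bGK_{g,n}^{15,0}$ given by the identity on underlying graphs is a quasi-isomorphism.

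First I would make $\phi$ precise and verify that it is a chain map. On generators, $\phi$ sends a graph whose special vertex carries a set $A$ of exactly $15$ marked half-edges to the same graph with that vertex decorated by $\omega_A\in H^{15,0}(\MM_{1,n_v})$, and it sends any graph with more than $15$ marked half-edges to $0$. The differential on $\bGK_{g,n}^{15,0}$ is simply $d_\vartheta$, because $d_\xi$ vanishes on it: on the special vertex $d_\xi$ factors through $\xi^*\colon H^{15,0}(\MM_{1,m})\to H^{15,0}(\MM_{0,m+2})=0$, and at a weight-$0$ vertex it creates a self-loop at, or positive genus on, a weight-$0$ vertex, landing in $I_{g,n}^{15,0}$. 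Under $\phi$, the vertex-splitting part of the differential of $X_{g,n}$ matches $d_\vartheta$ via the pullback formulas for $\omega_A$ along the clutching and gluing maps of $\MM$; since $\omega_A$ is a pullback class and $H^{15,0}$ of rational curves vanishes, these formulas carry no $\psi$-class corrections and read exactly as \eqref{wt11split} with $11$ replaced by $15$. Finally, the ``drop a marked half-edge'' part of the differential of $X_{g,n}$, applied to a graph $\Gamma$ with a marked set $\{a_0,\dots,a_{15}\}$ of size $16$, produces $\sum_{j=0}^{15}(-1)^j\Gamma_j$, where $\Gamma_j$ is the graph with marked set $\{a_0,\dots,\widehat{a_j},\dots,a_{15}\}$; its image under $\phi$ is $\sum_{j=0}^{15}(-1)^j\omega_{\{a_0,\dots,\widehat{a_j},\dots,a_{15}\}}$, which is $0$ by \eqref{11rels}, consistently with $\phi(\Gamma)=0$. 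The analogous vanishing for larger marked sets is immediate.

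For the quasi-isomorphism I would exploit the Koszul-type resolution
\[
0\longrightarrow\bigwedge\nolimits^{n}\Q^n\longrightarrow\cdots\longrightarrow\bigwedge\nolimits^{16}\Q^n\xrightarrow{\ \iota\ }\bigwedge\nolimits^{15}\Q^n\xrightarrow{\ e_A\mapsto\omega_A\ }K_n^{15}\longrightarrow 0,
\]
where $\iota$ is contraction with the all-ones covector $\sum_{i}e_i^*$. This is the brutal truncation in degrees $\geq 15$ of the Koszul complex $(\bigwedge\nolimits^\bullet\Q^n,\iota)$, which is acyclic; hence the truncation is exact except that $\coker\bigl(\iota\colon\bigwedge\nolimits^{16}\Q^n\to\bigwedge\nolimits^{15}\Q^n\bigr)$ equals $K_n^{15}$, which is exactly the presentation \eqref{11rels} (and $K_n^{15}\cong V_{(n-14)1^{14}}$ by \cite[Proposition~2.2]{CLP-STE}). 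Substituting this resolution for the special-vertex decoration in the definition of $\bGK_{g,n}^{15,0}$, with $d_\vartheta$ lifted to the exterior powers by the formula \eqref{wt11split}, produces a complex that is, up to the shift by $30$, the part of $X_{g,n}$ with at least $15$ marked half-edges, i.e.\ is $B_{g,n}^{15}$; its augmentation to $\bGK_{g,n}^{15,0}$ is $\phi$. Filtering this complex by exterior-power degree, the $E_0$-differential is $\iota$, and since the homology of $(\bigwedge\nolimits^\bullet\Q^n,\iota)$ in the truncated range is concentrated in degree $15$ and equals $K_n^{15}$, the $E_1$-page is $\bGK_{g,n}^{15,0}$ with differential $d_\vartheta$; the map induced by $\phi$ on $E_1$-pages is the identity, so $\phi$ is a quasi-isomorphism (the spectral sequence converges by finite-dimensionality). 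Chaining the three identifications yields $\gr_{15,0}H_c^*(\M_{g,n})\cong H^*(B_{g,n}^{15})\cong H^*(C_{g,n}^{15})$.

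The step that is genuinely new, as opposed to formal or a direct transcription of \cite{PayneWillwacher24}, is the compatibility of the Koszul resolution of $K_n^{15}$ with the splitting differential: one must check that the pullbacks of $\omega_A$ along the boundary maps of $\MM_{1,15}$ lift coherently through $\bigwedge\nolimits^{\geq 15}\Q^n$, so that the total differential on the substituted complex squares to zero and matches that of $X_{g,n}$. This is a finite computation with the codimension-one boundary strata of $\MM_{1,15}$, using that $\omega_A=f_A^*\omega_{15}$: at most one marking of $A$ may degenerate onto a rational tail, and restricting a holomorphic form introduces no $\psi$-classes. It is entirely parallel to the weight-$11$ discussion in \cite[Section~2]{CanningLarsonPayne} and \cite[Section~3]{PayneWillwacher24}. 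One must also keep track of the shifts $[-29]$ and $[-30]$ and of the signs coming from the $\ss_2\wr\ss_h$- and $\Aut(\Gamma)$-actions on the factors $\Q[-1]^{\otimes|E(\Gamma)|}$, but this is routine.
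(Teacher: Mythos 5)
Your argument is in substance the paper's own: the paper proves this proposition by declaring it identical to the proof of \cite[Proposition 3.6]{PayneWillwacher24} with the truncation constant $10$ replaced by $14$, and your write-up is a correct unwinding of that argument --- the chain-map verification (including the vanishing of $d_\xi$ and the absence of $\psi$-corrections for pullbacks of the holomorphic form $\omega_A$), the identification of the ``forget a marking'' differential with contraction $\iota$ in the truncated Koszul complex $\bigwedge^{\geq 15}\Q^{n_v}$, whose only cohomology is $\coker\bigl(\bigwedge^{16}\to\bigwedge^{15}\bigr)\cong K^{15}_{n_v}\cong H^{15,0}(\MM_{1,n_v})$, and a spectral sequence comparison. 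One detail needs correcting: filtering by exterior-power degree (the number of marked half-edges) does \emph{not} make $\iota$ the $E_0$-differential. The only filtration by that quantity compatible with the total differential is the increasing one (at most $p$ marked half-edges), since $\iota$ lowers the count while the splitting differential $\delta_s$ preserves it; on its associated graded the induced differential is therefore $\delta_s$, and $\iota$ is killed --- the opposite of what you assert. To get $\iota$ on the associated graded, filter instead by the number of vertices (equivalently, edges) of the underlying graph: $\delta_s$ strictly increases this and $\iota$ preserves it, so the $E_0$-differential is $\iota$, the $E_1$-page is $\bGK^{15,0}_{g,n}$ with differential $d_\vartheta$, the comparison map is the identity on $E_1$, and convergence follows from finite-dimensionality. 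With that substitution your argument goes through and matches the cited one.
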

\begin{proof}
The proof is identical to the proof of \cite[Proposition 3.6]{PayneWillwacher24}.
\end{proof}

In what follows, we will again draw graphs in the ``blown-up" picture, removing the special vertex $v$ so that the half-edges incident at $v$ become external legs. The legs corresponding to marked half-edges will be marked by a symbol $\omega$, and the non-marked half-edges by a symbol $\epsilon$. See \cite[Section 3.2]{PayneWillwacher24} or Section~\ref{sec:blownup}, above.

\newcommand{\yB}{B^{15}}
\section{Low excess computations in type (15,0)}\label{sec:low excess15}

We define the excess of a generator $\Gamma$ of $X_{g,n}$ to be
\[
E(\Gamma) = 3(g-1) +2n - 2\# \omega,  
\] 
where $\# \omega$ is the number of $\omega$-legs of $\Gamma$.
We also define
\[
E'(g,n) := 3g +2n - 33.
\]
Any generator $\Gamma$ for $\yB_{g,n}$ has $\# \omega\geq 15$, and hence
\[
E(\Gamma) \leq E'(g,n). 
\]
Furthermore $E(\Gamma) \equiv E'(g,n) \mod 2$.

Write each generator $\Gamma$ for $\yB_{g,n}$ as a union of its blown-up components:
\[
\Gamma = C_1\cup\cdots \cup C_k.
\]
The excess is additive 
\[
E(\Gamma) = E(C_1\cup\cdots \cup C_k)= E(C_1)+ \cdots + E(C_k),
\]
if we continue to define the excess of components by \eqref{equ:exc comp}. 
As in \cite[Lemma 4.2]{PayneWillwacher24}, the excess of all generators of $\yB_{g,n}$ is non-negative.
Hence, we obtain the following result, stated as Proposition \ref{prop:wt 15 vanishing} in the introduction.
\begin{cor} \label{cor:Eneg15}
If $E'(g,n) <0$ then $
\gr_{15,0} H^*_c(\M_{g,n}) =0.$
\end{cor}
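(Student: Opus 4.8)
The plan is to deduce the vanishing directly from the graph-complex model for $\gr_{15,0} H^*_c(\M_{g,n})$ together with the additivity of excess, in exact parallel with the proof of Corollary~\ref{cor:Eneg13} in weight $13$.

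First I would invoke the quasi-isomorphism established just above, $\gr_{15,0} H^*_c(\M_{g,n}) \cong H^*(\yB_{g,n})$ (equivalently $\cong H^*(C_{g,n}^{15})$; either model works), so that it suffices to show that the complex $\yB_{g,n}$ has no nonzero generators. Let $\Gamma = C_1 \cup \cdots \cup C_k$ be any generator of $\yB_{g,n}$, written as the union of its blown-up components. Since $\yB_{g,n} = \bigl(X_{g,n}/X_{g,n}^{\leq 14}\bigr)[-30]$, the graph $\Gamma$ has at least $15$ marked half-edges, so $\#\omega \geq 15$ and hence $E(\Gamma) = 3(g-1) + 2n - 2\#\omega \leq 3g + 2n - 33 = E'(g,n)$, which is the displayed inequality of this section. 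On the other hand, excess is additive over blown-up components, $E(\Gamma) = E(C_1) + \cdots + E(C_k)$, and each $E(C_i) \geq 0$. Combining these gives $0 \leq E(\Gamma) \leq E'(g,n)$; when $E'(g,n) < 0$ this is impossible, so $\yB_{g,n}$ has no generators and therefore $\gr_{15,0} H^*_c(\M_{g,n}) \cong H^*(\yB_{g,n}) = 0$.

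The only genuine input is the non-negativity $E(C_i) \geq 0$ of the excess of a single blown-up component, and this is the step I expect to require (mild) care. Using the formula \eqref{equ:exc comp}, namely $E(C_i) = 3h^1(C_i) + 3\#\epsilon + \#\omega + 2n_i - 3$, the only negative term is the constant $-3$, so one must check that stability of $C_i$ forces $3h^1(C_i) + 3\#\epsilon + \#\omega + 2n_i \geq 3$. This is a short case analysis: if $h^1(C_i) \geq 1$ it is immediate, and if $C_i$ is a tree then every weight-$0$ vertex has valence at least $3$, and tracing through how the $\epsilon$-, $\omega$-, and external legs attach yields the bound, exactly as in \cite[Lemma~4.2]{PayneWillwacher24}. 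Beyond this, the argument is purely formal bookkeeping with the excess, so I do not anticipate any serious obstacle.
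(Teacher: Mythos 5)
Your proposal is correct and follows the paper's own argument essentially verbatim: pass to the graph complex $B^{15}_{g,n}$, bound the excess of any generator above by $E'(g,n)$ using $\#\omega\geq 15$, and below by $0$ using additivity over blown-up components together with the non-negativity of each component's excess as in \cite[Lemma~4.2]{PayneWillwacher24}. The only difference is that you sketch the case analysis behind $E(C_i)\geq 0$, which the paper simply cites; this is harmless.
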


The low-excess computations of \cite[Section 4]{PayneWillwacher24} in weight 11 carry over almost unaltered to the type $(15,0)$ situation.
We hence only briefly recall the arguments and state the results, leaving details to loc. cit.

\subsection{Case \texorpdfstring{$E'(g,n) = 0$}{E'=0}}
When $E'(g,n) = 0$, the generators of $B_{g,n}$ have the following form:
\begin{equation}\label{equ:Gamma 0}
  \Gamma^{(0)}=
  \begin{tikzpicture}
    \node (v) at (0,0) {$\omega$};
    \node (w) at (1,0) {$1$};
    \draw (v) edge (w);
  \end{tikzpicture}
  \cdots 
  \begin{tikzpicture}
    \node (v) at (0,0) {$\omega$};
    \node (w) at (1,0) {$n$};
    \draw (v) edge (w);
  \end{tikzpicture}
  \begin{tikzpicture}
    \node[int] (i) at (0,.5) {};
    \node (v1) at (-.5,-.2) {$\omega$};
    \node (v2) at (0,-.2) {$\omega$};
    \node (v3) at (.5,-.2) {$\omega$};
  \draw (i) edge (v1) edge (v2) edge (v3);
  \end{tikzpicture}
  \cdots 
  \begin{tikzpicture}
    \node[int] (i) at (0,.5) {};
    \node (v1) at (-.5,-.2) {$\omega$};
    \node (v2) at (0,-.2) {$\omega$};
    \node (v3) at (.5,-.2) {$\omega$};
  \draw (i) edge (v1) edge (v2) edge (v3);
  \end{tikzpicture}
\end{equation}
Note that there are $n$ $(\omega-j)$-edges and $\frac {g-1}{2}$ tripods with three $\omega$-legs each.
The cohomological degree of such a generator is $k=15+\frac32 (g-1)$.

One hence arrives at the following list of cases in which $\gr_{15,0} H_c^k(\M_{g,n})$ is concentrated in a single degree $k$. The $\bbS_n$-action is by the sign representation in each case.
\begin{align*}   H^k(\yB_{1,15}) &=
    \begin{cases}
        V_{1^{15}} & \text{for $k=15$} \\
        0 & \text{otherwise}
    \end{cases}
&
H^k(\yB_{3,12}) &=
    \begin{cases}
        V_{1^{12}} & \text{for $k=18$} \\
        0 & \text{otherwise}
    \end{cases}
\\
H^k(\yB_{5,9}) &=
\begin{cases}
    V_{1^9} & \text{for $k=21$} \\
    0 & \text{otherwise}
\end{cases}     
&
H^k(\yB_{7,6}) &=
\begin{cases}
    V_{1^6} & \text{for $k=24$} \\
    0 & \text{otherwise}
\end{cases}    
\\
H^k(\yB_{9,3}) &=
    \begin{cases}
        V_{1^3} & \text{for $k=27$} \\
        0 & \text{otherwise}
    \end{cases}     
&
H^k(\yB_{11,0}) &=
\begin{cases}
    \Q & \text{for $k=30$} \\
    0 & \text{otherwise}
\end{cases}   
\end{align*}
\noindent Note, in particular, that the computation for $B^{15}_{11,0}$ proves the first statement in Theorem~\ref{thm:s16}.

\subsection{Case \texorpdfstring{$E'(g,n) = 1$}{E'=1}} \label{sec:exc1 15}
When $E'(g,n) = 1$, the relevant generators of $\yB_{g,n}$ are of the form 
\[
  \Gamma^{(1)}=
  \begin{tikzpicture}
    \node (v) at (0,0) {$\omega$};
    \node (w) at (1,0) {$1$};
    \draw (v) edge (w);
  \end{tikzpicture}
  \cdots 
  \begin{tikzpicture}
    \node (v) at (0,0) {$\omega$};
    \node (w) at (1,0) {$n$};
    \draw (v) edge (w);
  \end{tikzpicture}
  \begin{tikzpicture}
    \node[int] (i) at (0,.5) {};
    \node (v1) at (-.5,-.2) {$\omega$};
    \node (v2) at (0,-.2) {$\omega$};
    \node (v3) at (.5,-.2) {$\omega$};
  \draw (i) edge (v1) edge (v2) edge (v3);
  \end{tikzpicture}
  \cdots 
  \begin{tikzpicture}
    \node[int] (i) at (0,.5) {};
    \node (v1) at (-.5,-.2) {$\omega$};
    \node (v2) at (0,-.2) {$\omega$};
    \node (v3) at (.5,-.2) {$\omega$};
  \draw (i) edge (v1) edge (v2) edge (v3);
  \end{tikzpicture}
  \begin{tikzpicture}
    \node (v) at (0,0) {$\omega$};
    \node (w) at (1,0) {$\epsilon$};
    \draw (v) edge (w);
  \end{tikzpicture}
\]
or 
\[
  \Gamma^{(1)}_j=
  \begin{tikzpicture}
    \node (v) at (0,0) {$\omega$};
    \node (w) at (1,0) {$1$};
    \draw (v) edge (w);
  \end{tikzpicture}
  \cdots 
  \begin{tikzpicture}
    \node (v) at (0,0) {$\omega$};
    \node (w) at (1,0) {$n$};
    \draw (v) edge (w);
  \end{tikzpicture}
  \begin{tikzpicture}
    \node[int] (i) at (0,.5) {};
    \node (v1) at (-.5,-.2) {$\omega$};
    \node (v2) at (0,-.2) {$\omega$};
    \node (v3) at (.5,-.2) {$\omega$};
  \draw (i) edge (v1) edge (v2) edge (v3);
  \end{tikzpicture}
  \cdots 
  \begin{tikzpicture}
    \node[int] (i) at (0,.5) {};
    \node (v1) at (-.5,-.2) {$\omega$};
    \node (v2) at (0,-.2) {$\omega$};
    \node (v3) at (.5,-.2) {$\omega$};
  \draw (i) edge (v1) edge (v2) edge (v3);
  \end{tikzpicture}
  \begin{tikzpicture}
    \node[int] (i) at (0,.5) {};
    \node (v1) at (-.5,-.2) {$j$};
    \node (v2) at (0,-.2) {$\omega$};
    \node (v3) at (.5,-.2) {$\omega$};
  \draw (i) edge (v1) edge (v2) edge (v3);
  \end{tikzpicture},
\]
with the understanding that there is no $(\omega-j)$-edge in $\Gamma^{(1)}_j$.
The differential sends $\Gamma^{(1)}$ to the alternating sum of $\Gamma_j^{(1)}$.
Hence the cohomology of $\yB_{g,n}$ is the irreducible $\bbS_n$-representation $V_{21^{n-2}}$ in degree $k=14+\frac 32 g$, represented by linear combinations of the $\Gamma_j^{(1)}$.
We obtain:
\begin{align*}
    H^k(\yB_{2,14}) &=
    \begin{cases}
        V_{21^{12}} & \text{for $k=17$} \\
        0 & \text{otherwise}
    \end{cases}
&
    H^k(\yB_{4,11}) &=
    \begin{cases}
        V_{21^{9}} & \text{for $k=20$} \\
        0 & \text{otherwise}
    \end{cases}
    \\
    H^k(\yB_{6,8}) &=
    \begin{cases}
        V_{21^{6}} & \text{for $k=23$} \\
        0 & \text{otherwise}
    \end{cases}
&
    H^k(\yB_{8,5}) &=
    \begin{cases}
        V_{21^{3}} & \text{for $k=26$} \\
        0 & \text{otherwise}
    \end{cases}
\\
    H^k(\yB_{10,2}) &=
    \begin{cases}
        V_{2} & \text{for $k=29$} \\
        0 & \text{otherwise}
    \end{cases}.
\end{align*}

\subsection{Case \texorpdfstring{$E'(g,n)=2$}{E'=2}} \label{sec:exc2-15}
Suppose $E'(g,n) = 2$.  Realizing some cancellations one sees that the relevant generators for $\yB_{g,n}$ are $\Gamma^{(0)}$ as in \eqref{equ:Gamma 0} above of degree $15+\frac32(g-1)$, 
together with the following graphs of excess 2:
\[
    \Gamma^{(2)}_{\epsilon}:=
  \begin{tikzpicture}
    \node (v) at (0,0) {$\omega$};
    \node (w) at (1,0) {$1$};
    \draw (v) edge (w);
  \end{tikzpicture}
  \cdots 
  \begin{tikzpicture}
    \node (v) at (0,0) {$\omega$};
    \node (w) at (1,0) {$n$};
    \draw (v) edge (w);
  \end{tikzpicture}
  \begin{tikzpicture}
    \node[int] (i) at (0,.5) {};
    \node (v1) at (-.5,-.2) {$\omega$};
    \node (v2) at (0,-.2) {$\omega$};
    \node (v3) at (.5,-.2) {$\omega$};
  \draw (i) edge (v1) edge (v2) edge (v3);
  \end{tikzpicture}
  \cdots 
  \begin{tikzpicture}
    \node[int] (i) at (0,.5) {};
    \node (v1) at (-.5,-.2) {$\omega$};
    \node (v2) at (0,-.2) {$\omega$};
    \node (v3) at (.5,-.2) {$\omega$};
  \draw (i) edge (v1) edge (v2) edge (v3);
  \end{tikzpicture}
  \begin{tikzpicture}
    \node[int] (i) at (0,.5) {};
    \node (v1) at (-.5,-.2) {$\epsilon$};
    \node (v2) at (0,-.2) {$\omega$};
    \node (v3) at (.5,-.2) {$\omega$};
  \draw (i) edge (v1) edge (v2) edge (v3);
  \end{tikzpicture}
\]
\[
    \Gamma^{(2)}_{\epsilon j}:=
    \begin{tikzpicture}
        \node (v) at (0,0) {$\epsilon$};
        \node (w) at (1,0) {$j$};
        \draw (v) edge (w);
    \end{tikzpicture}
  \begin{tikzpicture}
    \node (v) at (0,0) {$\omega$};
    \node (w) at (1,0) {$1$};
    \draw (v) edge (w);
  \end{tikzpicture}
  \cdots 
  \begin{tikzpicture}
    \node (v) at (0,0) {$\omega$};
    \node (w) at (1,0) {$n$};
    \draw (v) edge (w);
  \end{tikzpicture}
  \begin{tikzpicture}
    \node[int] (i) at (0,.5) {};
    \node (v1) at (-.5,-.2) {$\omega$};
    \node (v2) at (0,-.2) {$\omega$};
    \node (v3) at (.5,-.2) {$\omega$};
  \draw (i) edge (v1) edge (v2) edge (v3);
  \end{tikzpicture}
  \cdots 
  \begin{tikzpicture}
    \node[int] (i) at (0,.5) {};
    \node (v1) at (-.5,-.2) {$\omega$};
    \node (v2) at (0,-.2) {$\omega$};
    \node (v3) at (.5,-.2) {$\omega$};
  \draw (i) edge (v1) edge (v2) edge (v3);
  \end{tikzpicture}
\]
\[
    \Gamma^{(2)}_{ij}:=
  \begin{tikzpicture}
    \node (v) at (0,0) {$\omega$};
    \node (w) at (1,0) {$1$};
    \draw (v) edge (w);
  \end{tikzpicture}
  \cdots 
  \begin{tikzpicture}
    \node (v) at (0,0) {$\omega$};
    \node (w) at (1,0) {$n$};
    \draw (v) edge (w);
  \end{tikzpicture}
  \begin{tikzpicture}
    \node[int] (i) at (0,.5) {};
    \node (v1) at (-.5,-.2) {$\omega$};
    \node (v2) at (0,-.2) {$\omega$};
    \node (v3) at (.5,-.2) {$\omega$};
  \draw (i) edge (v1) edge (v2) edge (v3);
  \end{tikzpicture}
  \cdots 
  \begin{tikzpicture}
    \node[int] (i) at (0,.5) {};
    \node (v1) at (-.5,-.2) {$\omega$};
    \node (v2) at (0,-.2) {$\omega$};
    \node (v3) at (.5,-.2) {$\omega$};
  \draw (i) edge (v1) edge (v2) edge (v3);
  \end{tikzpicture}
  \begin{tikzpicture}
    \node[int] (i) at (0,.5) {};
    \node (v1) at (-.5,-.2) {$i$};
    \node (v2) at (0,-.2) {$j$};
    \node (v3) at (.5,-.2) {$\omega$};
  \draw (i) edge (v1) edge (v2) edge (v3);
  \end{tikzpicture}
\]
\[
  \Gamma^{(2)}_{\omega\epsilon\omega\epsilon}=
  \begin{tikzpicture}
    \node (v) at (0,0) {$\omega$};
    \node (w) at (1,0) {$1$};
    \draw (v) edge (w);
  \end{tikzpicture}
  \cdots 
  \begin{tikzpicture}
    \node (v) at (0,0) {$\omega$};
    \node (w) at (1,0) {$n$};
    \draw (v) edge (w);
  \end{tikzpicture}
  \begin{tikzpicture}
    \node[int] (i) at (0,.5) {};
    \node (v1) at (-.5,-.2) {$\omega$};
    \node (v2) at (0,-.2) {$\omega$};
    \node (v3) at (.5,-.2) {$\omega$};
  \draw (i) edge (v1) edge (v2) edge (v3);
  \end{tikzpicture}
  \cdots 
  \begin{tikzpicture}
    \node[int] (i) at (0,.5) {};
    \node (v1) at (-.5,-.2) {$\omega$};
    \node (v2) at (0,-.2) {$\omega$};
    \node (v3) at (.5,-.2) {$\omega$};
  \draw (i) edge (v1) edge (v2) edge (v3);
  \end{tikzpicture}
  \begin{tikzpicture}
    \node (v) at (0,0) {$\omega$};
    \node (w) at (1,0) {$\epsilon$};
    \draw (v) edge (w);
  \end{tikzpicture}
  \begin{tikzpicture}
    \node (v) at (0,0) {$\omega$};
    \node (w) at (1,0) {$\epsilon$};
    \draw (v) edge (w);
  \end{tikzpicture}
\]
\[
  \Gamma^{(2)}_{\omega\epsilon j}=
  \begin{tikzpicture}
    \node (v) at (0,0) {$\omega$};
    \node (w) at (1,0) {$1$};
    \draw (v) edge (w);
  \end{tikzpicture}
  \cdots 
  \begin{tikzpicture}
    \node (v) at (0,0) {$\omega$};
    \node (w) at (1,0) {$n$};
    \draw (v) edge (w);
  \end{tikzpicture}
  \begin{tikzpicture}
    \node[int] (i) at (0,.5) {};
    \node (v1) at (-.5,-.2) {$\omega$};
    \node (v2) at (0,-.2) {$\omega$};
    \node (v3) at (.5,-.2) {$\omega$};
  \draw (i) edge (v1) edge (v2) edge (v3);
  \end{tikzpicture}
  \cdots 
  \begin{tikzpicture}
    \node[int] (i) at (0,.5) {};
    \node (v1) at (-.5,-.2) {$\omega$};
    \node (v2) at (0,-.2) {$\omega$};
    \node (v3) at (.5,-.2) {$\omega$};
  \draw (i) edge (v1) edge (v2) edge (v3);
  \end{tikzpicture}
  \begin{tikzpicture}
    \node[int] (i) at (0,.5) {};
    \node (v1) at (-.5,-.2) {$j$};
    \node (v2) at (0,-.2) {$\omega$};
    \node (v3) at (.5,-.2) {$\omega$};
  \draw (i) edge (v1) edge (v2) edge (v3);
  \end{tikzpicture}
  \begin{tikzpicture}
    \node (v) at (0,0) {$\omega$};
    \node (w) at (1,0) {$\epsilon$};
    \draw (v) edge (w);
  \end{tikzpicture}
\]
\[
    \Gamma^{(2)}_{i;j}:=
  \begin{tikzpicture}
    \node (v) at (0,0) {$\omega$};
    \node (w) at (1,0) {$1$};
    \draw (v) edge (w);
  \end{tikzpicture}
  \cdots 
  \begin{tikzpicture}
    \node (v) at (0,0) {$\omega$};
    \node (w) at (1,0) {$n$};
    \draw (v) edge (w);
  \end{tikzpicture}
  \begin{tikzpicture}
    \node[int] (i) at (0,.5) {};
    \node (v1) at (-.5,-.2) {$\omega$};
    \node (v2) at (0,-.2) {$\omega$};
    \node (v3) at (.5,-.2) {$\omega$};
  \draw (i) edge (v1) edge (v2) edge (v3);
  \end{tikzpicture}
  \cdots 
  \begin{tikzpicture}
    \node[int] (i) at (0,.5) {};
    \node (v1) at (-.5,-.2) {$\omega$};
    \node (v2) at (0,-.2) {$\omega$};
    \node (v3) at (.5,-.2) {$\omega$};
  \draw (i) edge (v1) edge (v2) edge (v3);
  \end{tikzpicture}
  \begin{tikzpicture}
    \node[int] (i) at (0,.5) {};
    \node (v1) at (-.5,-.2) {$i$};
    \node (v2) at (0,-.2) {$\omega$};
    \node (v3) at (.5,-.2) {$\omega$};
  \draw (i) edge (v1) edge (v2) edge (v3);
  \end{tikzpicture}
  \begin{tikzpicture}
    \node[int] (i) at (0,.5) {};
    \node (v1) at (-.5,-.2) {$j$};
    \node (v2) at (0,-.2) {$\omega$};
    \node (v3) at (.5,-.2) {$\omega$};
  \draw (i) edge (v1) edge (v2) edge (v3);
  \end{tikzpicture}
\]
After killing terms involving trees with at least 4 leaves, the differential maps $\Gamma^{(2)}_\epsilon$, $\Gamma^{(2)}_{ij}$, and $\Gamma^{(2)}_{i;j}$ to $0$. On the remaining generators, it is given by:
\begin{align*}
\Gamma^{(0)} &\mapsto \sum_j \pm  \Gamma^{(2)}_{\epsilon j} + (const)\Gamma^{(2)}_\epsilon &
\Gamma^{(2)}_{\epsilon j} &\mapsto \sum_i \pm \Gamma^{(2)}_{ij} \\ 
\Gamma^{(2)}_{\omega\epsilon\omega\epsilon} &\mapsto \pm \Gamma^{(2)}_\epsilon + \sum_j \pm \Gamma^{(2)}_{\omega\epsilon j} &
\Gamma^{(2)}_{\omega\epsilon j}&\mapsto \sum_i \pm \Gamma^{(2)}_{i;j}. 
\end{align*}
Here one needs to take care that when $n = 1$ or $g = 1$, some of these generators are absent.  More precisely, the generators $\Gamma^{(2)}_{ij}$ and $\Gamma^{(2)}_{i;j}$ are not present when $n = 1$, nor are $\Gamma^{(2)}_{i;j}$, $\Gamma^{(2)}_{\omega\epsilon\omega\epsilon}$, $\Gamma^{(2)}_{\omega\epsilon j}$ when $g = 1$.  When all of the generators are present, the cohomology consists of one copy of the sign representation $V_{1^n}$ of $\bbS_n$, represented by $\Gamma^{(2)}_\epsilon+\cdots$, and two copies of the irreducible representation $V_{31^{n-3}}$, represented by linear combinations of graphs of the form $\Gamma^{(2)}_{ij}$ and $\Gamma^{(2)}_{i;j}$ respectively.
Taking into account the special cases $n=1$ and $g=1$, we arrive at the following: 
\begin{align*}
    H^k(\yB_{1,16}) &=
    \begin{cases}
        V_{31^{13}} & \text{for $k=16$} \\
        0 & \text{otherwise}
    \end{cases}
    &
    H^k(\yB_{3,13}) &=
    \begin{cases}
        V_{1^{13}} & \text{for $k=18$} \\
        V_{31^{10}}\oplus V_{31^{10}} & \text{for $k=19$} \\
        0 & \text{otherwise}
    \end{cases}
    \\
    H^k(\yB_{5,10}) &=
    \begin{cases}
        V_{1^10} & \text{for $k=21$} \\
        V_{31^{7}}\oplus V_{31^{7}} & \text{for $k=22$} \\
        0 & \text{otherwise}
    \end{cases}
    &
    H^k(\yB_{7,7}) &=
    \begin{cases}
        V_{1^7} & \text{for $k=24$} \\
        V_{31^4}\oplus V_{31^4} & \text{for $k=25$} \\
        0 & \text{otherwise}
    \end{cases}\\
    H^k(\yB_{9,4}) &=
    \begin{cases}
        V_{1^{4}} & \text{for $k=27$} \\
        V_{31}\oplus V_{31} & \text{for $k=28$} \\
        0 & \text{otherwise}
    \end{cases}
    &
    H^k(\yB_{11,1}) &=
    \begin{cases}
        V_{1} & \text{for $k=30$} \\
        0 & \text{otherwise}
    \end{cases}
   .
\end{align*}

\subsection{Case \texorpdfstring{$E'(g,n)=3$}{E'=3}}
In the case $E'(g,n)=3$ the computation is slightly more complex, due to the increase in the number of generators. Proceeding as in \cite[Section 4.4]{PayneWillwacher24} we obtain:

\begin{align*}
    H^k(\yB_{2,15}) &=
    \begin{cases}
        V_{41^{7}}\oplus V_{321^{6}}\oplus V_{31^{8}} & \text{for $k=18$} \\
        0 & \text{otherwise}
    \end{cases}
    \\
    H^k(\yB_{4,12}) &=
    \begin{cases}
        V_{21^{10}} & \text{for $k=20$} \\
        V_{41^{8}}\oplus V_{41^{8}}\oplus V_{321^{7}}\oplus V_{31^{9}} & \text{for $k=21$} \\
        0 & \text{otherwise}
    \end{cases}
    \\
    H^k(\yB_{6,9}) &=
    \begin{cases}
        V_{21^{7}} & \text{for $k=23$} \\
        V_{41^5}\oplus V_{41^5}\oplus V_{321^4}\oplus V_{31^{6}} & \text{for $k=24$} \\
        0 & \text{otherwise}
    \end{cases}
    \\
    H^k(\yB_{8,6}) &=
    \begin{cases}
        V_{21^{4}} & \text{for $k=26$} \\
        V_{41^5}\oplus V_{41^5}\oplus V_{321}\oplus V_{31^{3}} & \text{for $k=27$} \\
        0 & \text{otherwise}
    \end{cases}
    \\
    H^k(\yB_{10,3}) &=
    \begin{cases}
        V_{21} & \text{for $k=29$} \\
        0 & \text{otherwise}
    \end{cases}
    \\
    H^k(\yB_{12,0}) &=0
\end{align*}
\noindent The computation for $B^{15}_{12,0}$ completes the proof of Theorem~\ref{thm:s16}.

\section{Infinite families of cohomology classes in \texorpdfstring{$H^{(15,0)}(\M_{g})$}{H15(Mg)}} \label{sec:inj}
We now restrict attention to the special case of $n = 0$, allowing the genus $g$ to be arbitrarily large.  We abbreviate: 
\begin{align*}
B_g^{15} &:= B_{g,0}^{15},
&
C_g^{15} &:= C_{g,0}^{15}.
\end{align*}

Let $\mathsf{GC}_0$ be the graph complex generated by connected graphs without self-loops in which every vertex has valence at least $3$. Each generator is given an orientation by a total ordering of the edges. The cohomological degree of a generator is the number of edges. Isomorphic graphs are identified, and two orientations are identified up to sign. The differential acts by splitting vertices. Therefore, the differential preserves the loop order, and we write $\mathsf{GC}_0^{(g)}$ for the part of loop order $g$. 

Analogously to \cite[Theorems 5.1 and 6.1]{PayneWillwacher24} one shows the following theorems.
\begin{thm}\label{thm:emb1}
There is a map of complexes
\[
F \colon \Sym^{14}(\GC_0)^{(g-1)}[-29] \oplus \Sym^{14}(\GC_0)^{(g-2)}[-30] \to C_{g}^{15}
\]
that induces an injective map on the level of cohomology
\[
H^{k-29}(\Sym^{14}(\GC_0)^{(g-1)}) \oplus H^{k-30}(\Sym^{14}(\GC_0)^{(g-2)})
\to H^k(C_g^{15}).
\]
\end{thm}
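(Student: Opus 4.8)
The plan is to transcribe, almost verbatim, the construction and argument of \cite[Theorems~5.1 and~6.1]{PayneWillwacher24}, replacing the truncation constant $10$ there by $14$; since those arguments are purely combinatorial and never use the particular value of the truncation, the transcription should be routine, and I only indicate its shape. Recall that $C_g^{15}=X_{g,0}^{\leq 14}[-29]$, so that in the blown-up picture a generator is a graph all of whose (non-special) vertices have valence $\geq 3$, carrying at most $14$ legs marked $\omega$ and any number of legs marked $\epsilon$, whose blown-up components $C_i$ satisfy $\sum_i\big(h^1(C_i)+\#\omega_i+\#\epsilon_i-1\big)=g-1$, and with the grading normalized so that internal edges and $\epsilon$-legs have degree $1$ while an $\omega$-leg (a $\xi$-type mark) has degree $0$.

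The first step is to define $F$. Given connected graphs $\gamma_1,\dots,\gamma_{14}\in\GC_0$ with $\sum_i h^1(\gamma_i)=g-1$, I would set $F(\gamma_1\cdots\gamma_{14})$ to be the sum, over all choices of a vertex $v_i\in V(\gamma_i)$ for each $i$, of the graph obtained by attaching an $\omega$-leg at each $v_i$, with orientation induced from the edge-orderings of the $\gamma_i$ via the Koszul sign rule. This graph has exactly $14$ $\omega$-legs, no $\epsilon$-legs, and genus $1+\sum_i h^1(\gamma_i)=g$, hence is a generator of $C_g^{15}$ in the same degree as $\gamma_1\cdots\gamma_{14}$ after the shift $[-29]$. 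For the second summand I would use the same recipe on graphs of total loop order $g-2$, additionally summing over the attachment of a single $\epsilon$-leg at an arbitrary vertex of the result; this raises the genus by $1$ and the degree by $1$, producing the shift $[-30]$. Extending multiplicatively over disjoint unions makes $F$ well defined on the symmetric powers.

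Next I would check that $F$ is a chain map. The differential of $\GC_0$ is vertex splitting, which under $F$ corresponds exactly to the part of the differential of $X_{g,0}$ splitting a vertex inside one of the $\gamma_i$. The remaining contributions to $d\circ F$ are: un-marking an $\omega$-leg, and splitting the central genus-$1$ vertex so that incident legs migrate onto a new genus-$0$ vertex. As in \cite[\S5]{PayneWillwacher24}, a split producing a genus-$0$ bivalent vertex vanishes by stability; un-marking the unique $\omega$-leg of a component turns it into a component joined to the central vertex by one undecorated edge, and, using the pullback relations for the classes $\omega_A$ (cf.\ \cite[\S4]{CLPW}), the resulting terms either vanish or are exactly cancelled by the contributions coming from the second summand. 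None of these cancellations depends on the value $14$, so the verification is unchanged.

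Finally, for injectivity on cohomology, I would argue as follows. The span $I\subseteq C_g^{15}$ of all generators that are \emph{not} of the canonical form above --- those carrying an $\epsilon$-leg, or with two $\omega$-legs on a single blown-up component, or with a blown-up component that is not a $\GC_0$-graph with a single attached $\omega$-leg --- is a subcomplex, since the differential of $X_{g,0}$ only splits vertices and un-marks legs and never contracts an edge, so it cannot carry a non-canonical generator to a canonical one. Hence $F$ factors through a section of the projection $\pi\colon C_g^{15}\to Q:=C_g^{15}/I$, and $\pi\circ F$ is precisely $\Sym^{14}$ of the hair-attaching chain map $\phi\colon\GC_0\to\widehat\GC_0$ (graphs with one distinguished $\omega$-hair, summed over attachment points), together with its $\epsilon$-variant on the second summand. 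Since taking $\ss_{14}$-coinvariants commutes with cohomology over $\Q$, one has $H^*\big(\Sym^{14}(-)\big)\cong\Sym^{14}\big(H^*(-)\big)$, so it suffices to know that $\phi$ is injective on cohomology; granting this, $H^*(\pi)\circ H^*(F)$ is injective and hence so is $H^*(F)$. The hard part will be exactly this injectivity statement for $\phi$ (equivalently, that the one-hair hairy graph complex contains $H^*(\GC_0)$ as a direct summand via $\phi$), which is the technical heart of \cite[\S6]{PayneWillwacher24}; the only other thing to watch is keeping the orientation and sign conventions on $\GC_0$, $\Sym^{14}(\GC_0)$ and $X_{g,0}$ compatible, and both are handled verbatim as in loc.\ cit.
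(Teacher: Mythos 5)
Your construction of the leading term of $F$ on the first summand matches the paper's, and the overall shape of the argument (build the top piece $x_{14}$ with $14$ $\omega$-legs, extend by tedious correction terms, reduce injectivity to a hairy-graph-complex statement going back to Turchin--Willwacher) is the intended one. However, your injectivity argument has a genuine gap. You claim the span $I$ of ``non-canonical'' generators is a subcomplex because the differential ``only splits vertices and un-marks legs and never contracts an edge, so it cannot carry a non-canonical generator to a canonical one.'' This is false: the differential of $X_{g,0}$ also splits the \emph{special} vertex, i.e., it contains the piece $\delta_s^\circ$ that fuses several legs (some $\epsilon$'s and at most one $\omega$) into a new ordinary vertex attached to the special vertex. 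This operation merges blown-up components and destroys $\epsilon$-legs. Concretely, take the generator with thirteen components $E_i$, each a $\GC_0$-graph with one $\omega$-leg, together with a component $D_1$ that is a $\GC_0$-graph with a single $\epsilon$-leg and a component $D_2$ that is a $\GC_0$-graph with a single $\omega$-leg; this lies in your $I$. The term of $\delta_s^\circ$ fusing the $\epsilon$-leg of $D_1$ with the $\omega$-leg of $D_2$ into a new trivalent vertex yields fourteen components, each a $\GC_0$-graph with exactly one $\omega$-leg --- a canonical generator. So $dI\not\subseteq I$, $\pi$ is not a chain map, and the factorization $\pi\circ F=\Sym^{14}(\phi)$ does not exist. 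The paper avoids this by \emph{filtering} by the number of $\omega$-legs rather than quotienting: $\delta_s$ preserves $\#\omega$ while $\delta_\omega$ strictly lowers it, so the top ($\#\omega=14$) piece of any coboundary $\delta y$ equals $\delta_s y_{14}$; one then needs injectivity of $\Sym^{14}(H(\GC_0))^{(g-1)}\to H(C_g^{15},\delta_s)$, where the target still contains all the $\epsilon$-leg generators. That is the actual Turchin--Willwacher input, and it is not equivalent to injectivity of the one-hair map $\phi$ into a complex with the $\epsilon$-generators discarded.

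Two further points. First, for the second summand your leading term (sum over attaching one extra $\epsilon$-leg at a vertex of some $\gamma_i$) differs from the paper's, which adds a disjoint $\epsilon$--$\epsilon$ edge component; your version is not visibly $\delta_s$-closed, since $\delta_s^\circ$ can fuse the lone $\epsilon$-leg with an $\omega$-leg into a new stable trivalent vertex, and you give no argument that the resulting class is nontrivial. Second, in your chain-map check the claim that splits of the special vertex ``vanish by stability'' only holds when a single leg migrates (bivalent new vertex); once an $\epsilon$-leg is present, two legs can migrate to a stable trivalent vertex, which is exactly the phenomenon that breaks your subcomplex claim above.
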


\begin{proof}[Idea of proof.]
The result is shown by the same argument as \cite[Theorem 5.1]{PayneWillwacher24},  replacing the constant 11 by 15. 
We do not reproduce the full proof here, but instead indicate the main underlying ideas. 
The generators of $C_g^{15}$ can be represented by blown-up graphs so that 0--14 of the external legs are labeled by a symbol $\omega$, and the other external legs are labeled by a symbol $\epsilon$. The differential splits into three pieces $\delta=\delta_s^\bullet+\delta_s^\circ+\delta_\omega$, with $\delta_s^\bullet$ splitting a vertex and $\delta_s^\circ$ fusing some $\epsilon$-legs and at most one $\omega$-leg into a new vertex, while $\delta_\omega$ replaces one $\omega$ at a leg by $\epsilon$. 
We also write $\delta_s:=\delta_s^\bullet+\delta_s^\circ$.

Now consider a cocycle $x \in C_g^{15}$ and decompose it as 
\[
x=x_0+\cdots+x_{14},
\]
where $x_j$ is the part with $j$ many $\omega$-legs.
Then the cocycle condition $\delta x=0$ implies that $x_{14}$ is a cocycle in the complex $(C_g^{15}, \delta_s)$, i.e., $\delta_s x_{14}=0$.
Furthermore, if $x_{14}$ is not exact in $(C_g^{15}, \delta_s)$, then $x$ cannot be exact in $(C_g^{15}, \delta_s+\delta_\omega)$,
so we have constructed a non-trivial cohomology class as desired.

Next, suppose we have graph cocycles $\gamma_1,\dots,\gamma_{14}\in \GC_0$. Then we may build a cocycle 
\[
x_{14} = 
\begin{tikzpicture}
    \node[ext] (v1) at (0,.3) {$\scriptstyle \gamma_1$}; 
    \node (w1) at (0,-.3) {$\scriptstyle \omega$};
    \draw (v1) edge (w1);
    \node at (.5,0) {$\cdots$};
    \node[ext] (vn) at (1,.3) {$\scriptstyle \gamma_{14}$}; 
    \node (wn) at (1,-.3) {$\scriptstyle \omega$};
    \draw (vn) edge (wn);
\end{tikzpicture}
\]
by attaching $\omega$-legs to $\gamma_1,\dots,\gamma_{14}$ and taking the union of the resulting graphs.
This assignment produces a morphism of complexes $\Sym^{14}(\GC_0)^{(g-1)}[-29] \to (C_{g}^{15},\delta_s)$.
It is shown in \cite{PayneWillwacher24} that this assignment has the following properties.
\begin{enumerate}
    \item It is an injection on homology. This can essentially be deduced by an earlier result of Turchin and the fourth author \cite{TurchinWillwacher}, see also the Appendix of \cite{PayneWillwacher24}.
    \item It extends to a morphism of complexes $\Sym^{14}(\GC_0)^{(g-1)}[-29] \to (C_{g}^{15},\delta)$. The extension involves explicit combinatorial expressions (that are tedious to define) for the missing pieces $x_{13},x_{12},\dots$ of our cocycle $x$ above.
\end{enumerate}
By our consideration above, this shows that we have an embedding in cohomology
$$\Sym^{14}(H(\GC_0))^{(g-1)}[-29] \to H(C_{g}^{15},\delta).$$
The embedding of the second summand $\Sym^{14}(H(\GC_0))^{(g-2)}[-30]$ is shown similarly, except that one adds an additional $\epsilon$-$\epsilon$-edge (in the appropriate way) to the leading order term $x_{14}$, increasing the genus and degree by one.
\end{proof}

\begin{thm}\label{thm:emb 2}
There is a 
 map of dg vector spaces 
\begin{multline*}
E \colon 
\Sym^{13}(\GC_0)^{(g-3)}[-30]
\oplus
\Sym^{10}(\GC_0)^{(g-5)}[-30]
\oplus
\Sym^{7}(\GC_0)^{(g-7)}[-30] \\
\oplus
\Sym^{4}(\GC_0)^{(g-9)}[-30]
 \oplus
\GC_0^{(g-11)}[-30]
\to
B_g^{15}
\end{multline*}
that gives rise to an injective 
map on cohomology
\begin{multline*}
    E\colon 
H^{k-30}(\Sym^{13}(\GC_0))^{(g-3)} 
\oplus
H^{k-30}(\Sym^{10}(\GC_0))^{(g-5)} 
\oplus
H^{k-30}(\Sym^{7}(\GC_0))^{(g-7)} 
\\
\oplus
H^{k-30}(\Sym^{4}(\GC_0))^{(g-9)} 
\oplus
H^{k-30}(\GC_0)^{(g-11)} 
\to H^k(B_g).
\end{multline*}
\end{thm}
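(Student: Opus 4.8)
The plan is to transcribe the proof of \cite[Theorem~6.1]{PayneWillwacher24}, replacing the critical weight $11$ and the truncation constant $10$ used there by the weight $15$ and the constant $14$ relevant here; the argument runs in close parallel to the proof of Theorem~\ref{thm:emb1} above, so I would only spell out the points where the construction differs. First I would recall the relevant setup: by the isomorphism \eqref{equ:HB HC iso} it is equivalent to build the desired classes in $C_g^{15}$, whose generators are blown-up graphs with at most $14$ legs labelled $\omega$ and the remaining legs labelled $\epsilon$, and whose differential decomposes as $\delta = \delta_s^\bullet + \delta_s^\circ + \delta_\omega$, where $\delta_s := \delta_s^\bullet + \delta_s^\circ$ preserves the number of $\omega$-legs (it splits a vertex, resp.\ bundles some $\epsilon$-legs and at most one $\omega$-leg into a new vertex) and $\delta_\omega$ strictly decreases it (it relabels an $\omega$-leg as $\epsilon$). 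Filtering by the number of $\omega$-legs, the associated graded differential is $\delta_s$, so the standard leading-term argument applies: if $x \in C_g^{15}$ is a $\delta$-cocycle whose piece with the maximal number of $\omega$-legs is not $\delta_s$-exact, then $[x] \neq 0$ in $H^*(C_g^{15})$, and, more generally, a subcomplex of $(C_g^{15}, \delta)$ whose image in the $\delta_s$-cohomology of the associated graded is injective maps injectively to $H^*(C_g^{15})$.

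Next I would produce, for each summand, a family of $\delta_s$-cocycles serving as such leading terms. As in the proof of Theorem~\ref{thm:emb1}, to graph cocycles $\gamma_1, \dots, \gamma_r \in \GC_0$ one associates the blown-up graph obtained by attaching one $\omega$-leg to each $\gamma_i$ and taking the disjoint union; this defines a chain map $\Sym^r(\GC_0) \to (C_g^{15}, \delta_s)$. For the summand with symmetric power $\Sym^{13-3m}$, where $m = 0, \dots, 4$ and $r = 13 - 3m$, one additionally attaches a fixed auxiliary ``gadget'' consisting of $m$ tripods (each an internal vertex carrying three $\omega$-legs, contributing $2$ to the loop order) together with one further fixed small component that carries the $14$th $\omega$-leg, contributes $2$ to the loop order, and accounts for the degree shift $[-30]$. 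A count of $\omega$-legs gives $14$ in each case --- the maximal value --- so the leading-term argument applies directly, and a count of loop order shows that the $g_i$ of the $\gamma_i$ sum to $g - 3 - 2m$, producing the summand $\Sym^{13-3m}(\GC_0)^{(g-3-2m)}[-30]$. The precise form of the fixed component, and the verification that the whole configuration is $\delta_s$-closed, are as in \cite[Section~6]{PayneWillwacher24}.

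To conclude I would invoke the two facts established there. First, the assignment $\gamma_1 \cdots \gamma_r \mapsto$ (disjoint union with $\omega$-legs and gadget attached) is injective on $\delta_s$-cohomology; this follows, as in the Appendix of \cite{PayneWillwacher24}, from the Turchin--Willwacher theorem \cite{TurchinWillwacher} that $\Sym^\bullet(\GC_0)$ injects into the ambient graph complex, the fixed gadget contributing only a nonzero scalar. Second, each such $\delta_s$-cocycle extends to a genuine $\delta$-cocycle of $C_g^{15}$: one chooses the lower-order pieces, with fewer $\omega$-legs, so that the total is $\delta$-closed. This second step is carried out by the explicit combinatorial formulas of \cite[Section~6]{PayneWillwacher24}, which do not depend on the value of the critical weight and carry over unchanged. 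Injectivity of $E$ on cohomology then follows because the leading terms of classes from distinct summands sit in distinct pieces of the $\delta_s$-cohomology of the associated graded, distinguished by how the $\omega$-legs and the loop order are partitioned between the $\gamma_i$-components and the fixed gadget, while injectivity within a single summand is the first fact. I expect the extension step to be the main obstacle: building the subleading correction terms and verifying $\delta$-closedness is routine but lengthy bookkeeping --- which is precisely why the result is stated as analogous to \cite{PayneWillwacher24} --- and a lesser point is to confirm that the five listed gadgets exhaust the configurations that contribute leading terms of this type, although this is not needed for the injectivity that is claimed.
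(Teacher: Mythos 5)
Your proposal is correct and follows essentially the same route as the paper: the paper likewise reduces to \cite[Theorem~6.1]{PayneWillwacher24} with the constant $11$ replaced by $15$, builds the cocycle from $r\in\{1,4,7,10,13\}$ graph cocycles decorated with $\omega$-legs together with tripods and a small fixed piece, and detects non-triviality via the leading term in $(C_g^{15},\delta_s)$ using the Turchin--Willwacher injectivity. The only cosmetic difference is that the paper constructs the chain-level map into $B_g^{15}$ directly (as $y_{15}+y_{16}$ with $16$ $\omega$-legs) and then passes to $C_g^{15}$ by the connecting homomorphism of \eqref{equ:B C short exact}, whereas you start on the $C$-side; the leading term you describe is exactly the paper's image $\delta_\omega y$ (an $\omega$--$\epsilon$ edge plus an $\epsilon$--$\epsilon$ edge rather than a single component, but this is a detail you explicitly defer to loc.\ cit.).
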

\begin{proof}[Idea of proof.]
    The result is shown by the same argument as \cite[Theorem 6.1]{PayneWillwacher24}, replacing the constant 11 by 15.
    Hence we again only sketch the main idea, which is as follows.
    Suppose we are given (non-exact) graph cocycles $\gamma_1,\dots,\gamma_r$ with $r$ any number in $\{1,4,7,10,13\}$. 
    Then we may build a cochain $y_{16} \in B_g^{15}$ with $16$ $\omega$-legs of the form
    \[
y_{16} = 
    \underbrace{
    \begin{tikzpicture}
      \node[int] (i) at (0,.5) {};
      \node (v1) at (-.5,-.2) {$\omega$};
      \node (v2) at (0,-.2) {$\omega$};
      \node (v3) at (.5,-.2) {$\omega$};
    \draw (i) edge (v1) edge (v2) edge (v3);
    \end{tikzpicture} 
    \cdots 
    \begin{tikzpicture}
      \node[int] (i) at (0,.5) {};
      \node (v1) at (-.5,-.2) {$\omega$};
      \node (v2) at (0,-.2) {$\omega$};
      \node (v3) at (.5,-.2) {$\omega$};
    \draw (i) edge (v1) edge (v2) edge (v3);
    \end{tikzpicture}  
    }_{\frac{16-r}3 \times }
    \begin{tikzpicture}
      \node[ext] (i) at (0,.5) {$\gamma_1$};
      \node (v1) at (0,-.2) {$\omega$};
    \draw (i) edge (v1);
    \end{tikzpicture}
    \cdots 
    \begin{tikzpicture}
      \node[ext] (i) at (0,.5) {$\gamma_r$};
      \node (v1) at (0,-.2) {$\omega$};
    \draw (i) edge (v1);
    \end{tikzpicture}.
\]
Since $y_{16}$ has precisely 16 $\omega$-legs, this restricts the possible $r$ to the set $\{1,4,7,10,13\}$.
The element $y_{16}$ satisfies $\delta_s y_{16}=0$, but generally $\delta_\omega y_{16}\neq 0$.
However, as seen in \cite{PayneWillwacher24}, one may define explicitly a cochain $y_{15}\in B_g^{15}$ with $15$ $\omega$-legs such that $(\delta_s+\delta_\omega)(y_{15}+y_{16})=0$, so that $y=y_{15}+y_{16}$ is a cocycle in $(B_g^{15},\delta)$. This construction defines the map $E$ of the theorem.

To check the theorem, one (essentially) has to verify that our cocycle  $y$ is not exact. To this end we may consider its image $x:=\delta_\omega y\in C_g^{15}$ in the complex $C_g^{15}$ (see \eqref{equ:HB HC iso}, \eqref{equ:B C short exact}), and then check that $x$ defines a non-trivial cohomology class in the same way as we did in the proof of Theorem \ref{thm:emb1}.
In fact, it turns out that $x$ has the form
\[
  \begin{tikzpicture}
    \node (v) at (0,0) {$\omega$};
    \node (w) at (1,0) {$\epsilon$};
    \draw (v) edge (w);
  \end{tikzpicture}
  \begin{tikzpicture}
    \node (v) at (0,0) {$\epsilon$};
    \node (w) at (1,0) {$\epsilon$};
    \draw (v) edge (w);
  \end{tikzpicture}
  \begin{tikzpicture}
    \node[int] (i) at (0,.5) {};
    \node (v1) at (-.5,-.2) {$\omega$};
    \node (v2) at (0,-.2) {$\omega$};
    \node (v3) at (.5,-.2) {$\omega$};
  \draw (i) edge (v1) edge (v2) edge (v3);
  \end{tikzpicture}
  \cdots 
  \begin{tikzpicture}
    \node[int] (i) at (0,.5) {};
    \node (v1) at (-.5,-.2) {$\omega$};
    \node (v2) at (0,-.2) {$\omega$};
    \node (v3) at (.5,-.2) {$\omega$};
  \draw (i) edge (v1) edge (v2) edge (v3);
  \end{tikzpicture}  
  \begin{tikzpicture}
    \node[ext] (i) at (0,.5) {$\gamma_1$};
    \node (v1) at (0,-.2) {$\omega$};
  \draw (i) edge (v1);
  \end{tikzpicture}
  \cdots 
  \begin{tikzpicture}
    \node[ext] (i) at (0,.5) {$\gamma_r$};
    \node (v1) at (0,-.2) {$\omega$};
  \draw (i) edge (v1);
  \end{tikzpicture}
+(\cdots),
\]
where $(\cdots)$ are exact terms or terms with fewer connected components.
The leading order term displayed above represents a non-trivial cohomology class in $(C_g^{15},\delta_s)$, just as in  \cite[Proposition 5.9]{PayneWillwacher24}, and hence $x$ represents a non-trivial cohomology class in $(C_g^{15},\delta)$.
\end{proof}

One furthermore verifies that the images of the injections of Theorems \ref{thm:emb1} and \ref{thm:emb 2} are linearly independent inside $\gr^W_{15}H^k_c(\M_g)$, just as in \cite[Section 6.4]{PayneWillwacher24}.
\begin{cor}\label{cor:emb1}
There is an injection 
\begin{multline*}
\big(
H^{k-29}(\Sym^{14}(\GC_0)^{(g-1)}) \oplus H^{k-30}(\Sym^{14}(\GC_0)^{(g-2)})
\oplus
H^{k-30}(\Sym^{13}(\GC_0))^{(g-3)} 
\\
\oplus
H^{k-30}(\Sym^{10}(\GC_0))^{(g-5)} 
\oplus
H^{k-30}(\Sym^{7}(\GC_0))^{(g-7)} 
\oplus
H^{k-30}(\Sym^{4}(\GC_0))^{(g-9)} 
\\\oplus
H^{k-30}(\GC_0)^{(g-11)} 
\big) \otimes \s_{16} 
\to \gr^W_{15}H^k_c(\M_g).
\end{multline*}
\end{cor}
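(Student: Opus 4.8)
The plan is to assemble the injection directly from Theorems~\ref{thm:emb1} and~\ref{thm:emb 2}, using the chain of quasi-isomorphisms $\gr_{15,0}H^*_c(\M_g)\cong H^*(B_g^{15})\cong H^*(C_g^{15})$ established above. First I would note that $F$ and $E$ are maps of complexes defined combinatorially over $\qq$: the graph complexes $\GC_0$, $\Sym^r(\GC_0)$, $C_g^{15}$ and $B_g^{15}$ all carry natural $\qq$-forms (the $\omega$-markings encode a rational basis of $\s_{16}\subset H^{15}(\Mb_{1,\bullet})$ coming from Theorem~\ref{thm:H15}), so the induced maps on cohomology are $\qq$-linear. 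Composing the injection of Theorem~\ref{thm:emb1} with the connecting isomorphism $H^*(C_g^{15})\cong H^*(B_g^{15})$ of~\eqref{equ:HB HC iso}, and placing it alongside the injection of Theorem~\ref{thm:emb 2}, yields a $\qq$-linear map from the full seven-term direct sum $P^k$ appearing in the statement into $H^k(B_g^{15})$.

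The substantive step is to show that this combined map is injective, i.e.\ that the seven images are in direct sum. Each of $F$, $E$ is injective on cohomology by Theorems~\ref{thm:emb1} and~\ref{thm:emb 2}, so it suffices to separate the summands, and for this I would follow \cite[\S6.4]{PayneWillwacher24}: transport everything to $C_g^{15}$ and filter by the number of $\omega$-legs, refined by the partition of those legs among connected components together with the loop orders of the components. The representatives constructed in the proofs of the two theorems have explicit leading pieces --- for $F$, a disjoint union of graph cocycles each carrying a single $\omega$-leg (with an extra $\epsilon$-$\epsilon$ edge in the second summand); for $E$, a disjoint union of $\gamma_i$-decorated one-$\omega$-leg components, $(16-r)/3$ tripods carrying three $\omega$-legs each, and an $\omega$-$\epsilon$ edge together with an $\epsilon$-$\epsilon$ edge. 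The number of tripods and the total loop order of the $\gamma_i$-part take pairwise distinct values over $r\in\{1,4,7,10,13\}$, and differ from the tripod-free shapes coming from $F$, so the leading terms of classes from different summands lie in distinct associated-graded pieces. A standard spectral-sequence argument then shows that a nontrivial relation among the seven images would descend to a nontrivial relation inside a single summand, contradicting the injectivity of $F$ or $E$.

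Finally I would pass from the type $(15,0)$ part to the $\s_{16}$-isotypic part. By~\eqref{eq:U+W}, $\gr^W_{15}H^k_c(\M_g)\cong U_g^k\oplus W_g^k$ with $(U_g^k)^{\semis}\cong\bigoplus\s_{16}$ and $(W_g^k)^{\semis}\cong\bigoplus\l^2\s_{12}$; since $\s_{16}$ has Hodge types $(15,0),(0,15)$ while $\l^2\s_{12}$ has Hodge types $(13,2),(2,13)$, the type $(15,0)$ summand $\gr_{15,0}H^k_c(\M_g)$ is precisely the $(15,0)$-part of $U_g^k$. As $\s_{16}$ is the only irreducible motivic structure of Hodge type $(15,0)$ that can occur here, $U_g^k\cong H^k(B_g^{15})\otimes_\qq\s_{16}$ as $\qq$-motivic structures, with $H^k(B_g^{15})$ the multiplicity space. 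Tensoring the injection $P^k\hookrightarrow H^k(B_g^{15})$ of the first two steps with $\s_{16}$ then gives the claimed injection $P^k\otimes_\qq\s_{16}\hookrightarrow U_g^k\subset\gr^W_{15}H^k_c(\M_g)$.

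The main obstacle is the second step. Injectivity of each of $F$ and $E$ is imported from Theorems~\ref{thm:emb1} and~\ref{thm:emb 2}, but verifying that no cancellation occurs \emph{between} summands requires a careful accounting of the leading terms of the explicitly constructed cocycles and of the filtration with respect to which they are ``leading'', exactly as in \cite[\S6.4]{PayneWillwacher24}; all the genuinely new input (the replacement of the truncation constant $10$ by $14$) has already been absorbed into the statements of Theorems~\ref{thm:emb1} and~\ref{thm:emb 2}.
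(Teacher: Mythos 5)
Your proposal is correct and follows essentially the same route as the paper, which simply combines the injections of Theorems~\ref{thm:emb1} and~\ref{thm:emb 2} and verifies linear independence of their images by the leading-term/filtration argument of \cite[Section 6.4]{PayneWillwacher24}. Your write-up in fact supplies more detail than the paper's one-line justification (including the passage from the type $(15,0)$ multiplicity space to the $\s_{16}$-isotypic part), and the details you give are consistent with the intended argument.
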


\begin{cor}\label{cor:exp growth}
    The dimension of the compactly supported cohomology $H_c^{2g+k}(\M_g)$ grows at least exponentially with $g$ for all $k\leq 73$ with the possible exception of $k=1,4,7,71$. 
\end{cor}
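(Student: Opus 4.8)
The plan is to prove Corollary~\ref{cor:exp growth} by assembling exponential-growth statements in four different motivic weights. For a fixed integer $k$ with $0 \le k \le 73$ and $k \notin \{1,4,7,71\}$, I would show that at least one of the four graded pieces $\gr^W_0 H^{2g+k}_c(\M_g)$, $\gr^W_2 H^{2g+k}_c(\M_g)$, $\gr^W_{11} H^{2g+k}_c(\M_g)$, $\gr^W_{15} H^{2g+k}_c(\M_g)$ has dimension growing at least exponentially with $g$; since $\dim H^{2g+k}_c(\M_g)$ is the sum over all $m$ of $\dim \gr^W_m H^{2g+k}_c(\M_g)$, this is enough. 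The first three are already available in the literature: by \cite{CGP21} the dimension of $\gr^W_0 H^{2g+k}_c(\M_g)$ grows at least exponentially for $k \in \{0,3\}$; by \cite[Theorem~1.2]{PayneWillwacher21} the dimension of $\gr^W_2 H^{2g+k}_c(\M_g)$ grows at least exponentially for $k \in \{2,3,5,6,8,9,10,12,13\}$; and by \cite[Corollary~1.4]{PayneWillwacher24} the dimension of $\gr^W_{11} H^{2g+k}_c(\M_g)$ grows at least exponentially for $k \in \{8,11,12\} \cup \bigl(\{14,\ldots,53\} \setminus \{20\}\bigr)$. These account for the dark gray boxes in the figure above.

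The remaining work is the weight-$15$ contribution. By Corollary~\ref{cor:emb1}, for each $g$ and $k$ there is an injection of $\bigl(\bigoplus_{(r,c,d)} H^{k-c}(\Sym^r(\GC_0)^{(g-d)})\bigr) \otimes \s_{16}$ into $\gr^W_{15} H^k_c(\M_g)$, where $(r,c,d)$ runs over the seven triples $(14,29,1)$, $(14,30,2)$, $(13,30,3)$, $(10,30,5)$, $(7,30,7)$, $(4,30,9)$, $(1,30,11)$. Working over $\Q$, the Künneth formula gives $H^*(\Sym^r(\GC_0)^{(h)}) \cong \Sym^r(H^*(\GC_0))^{(h)}$; moreover $H^*(\GC_0^{(h)})$ is identified (up to a degree shift by the loop order) with $\gr^W_0 H^*_c(\M_h)$, so the seven summands become the graded pieces $\V^{r,\bullet}_\bullet$ of Theorem~\ref{thm:emb intro}. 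To produce exponentially many classes in such a symmetric power, I would place on the symmetric factors: one ``variable'' class ranging over a basis of $\gr^W_0 H^{2h+j}_c(\M_h)$ for a value of $j$ at which exponential growth in $h$ is known (in particular $j = 0$, the case used in the paper's example for $k=20$), together with a fixed collection of nonzero small-genus top-weight classes of prescribed degrees --- for instance copies of $\sigma_3 \in \gr^W_0 H^6_c(\M_3)$ (the nonzero $\mathfrak{grt}_1$-element, of even degree $6$) and of the class in $\gr^W_0 H^{15}_c(\M_6)$. Tracking genus and degree through the shifts $[-29], [-30]$ and the genus shift $g-d$, such a configuration shows that $\dim \gr^W_{15} H^{2g+k}_c(\M_g)$ grows at least exponentially with $g$ precisely when $k$ equals $(c - 2d)$ plus a sum of ``degree offsets'' (degree minus twice the genus) of the fixed building blocks and of the variable factor, for one of the seven triples. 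Enumerating the values of $k$ obtained this way yields exponential growth for all $k$ in the set $K_{15} = \{8,11,12,15,16\} \cup \bigl(\{18,\ldots,73\} \setminus \{71\}\bigr)$ of light gray boxes.

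It then remains to verify the elementary combinatorial assertion that $\{0,1,\ldots,73\} \setminus \{1,4,7,71\}$ is contained in the union of the four index sets: $k \in \{0,3\}$ is covered by weight $0$, $k \in \{2,5,6,9,10,13\}$ by weight $2$, $k \in \{14,17\}$ by weight $11$, the value $k = 20$ by weight $15$ (via the summand $\V^{10,k-30}_{g-5}$, exactly as in the example following Theorem~\ref{thm:emb intro}), and every remaining $k \le 73$ by weight $11$ or weight $15$. The four values $k = 1,4,7,71$ appear in none of the sets.

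I expect the middle paragraph to be where the real work lies. Three points require care. First, one must invoke a precise and correctly attributed statement of exponential growth of $\dim \gr^W_0 H^{2h+j}_c(\M_h)$ for the values of $j$ used as variable offsets. Second, the Koszul sign convention must be respected: a building block used in more than one symmetric slot must have even degree (which is why $\sigma_3 \in \gr^W_0 H^6_c(\M_3)$, rather than a graph of odd degree, is the natural repeated factor), and one must check that the relevant symmetric power of it is nonzero. Third --- and this is the genuinely delicate bookkeeping --- one must enumerate exactly which nonzero top-weight classes of small genus are available as fixed building blocks, hence which degree offsets can be assembled with at most $r-1$ factors, and then confirm that the resulting set of reachable $k$ is exactly $K_{15}$; in particular the genus and degree budgets become tight near the top of the range, and one must check that they really do exclude $k = 71$ while still reaching $k = 72$ and $k = 73$.
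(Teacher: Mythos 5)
Your overall strategy matches the paper's: combine the known exponential growth in weights $0$, $2$, and $11$ with the weight-$15$ injection of Corollary~\ref{cor:emb1}, and realize classes in $H^*(\Sym^r(\GC_0))\cong\Sym^r(H^*(\GC_0))$ by distributing graph cohomology classes over the symmetric slots so that one slot carries an exponentially growing family. However, there is a genuine gap in the list of building blocks. The only degree offsets ($\kappa=\deg-2\cdot\text{genus}$) you allow are $0$ and $3$, coming from $\gr^W_0H^{2h}_c(\M_h)$ and $\gr^W_0H^{2h+3}_c(\M_h)$. Every $k$ you can reach from a given summand $\V^{r,k-c}_{g-d}$ is then of the form $(c-2d)+3m$ with $0\le m\le r$, i.e.\ lies in a single arithmetic progression of common difference $3$ starting at $c-2d\in\{27,26,24,20,16,12,8\}$. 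Taking the union over the seven summands, the residue class $1\bmod 3$ is reached only from the summand with $c-2d=16$ and $r=7$, hence only for $k\in\{16,19,\dots,37\}$; and the class $0 \bmod 3$ only up to $69$. Consequently the values $k\in\{55,58,61,64,67,70,73\}$ and $k=72$ --- eight of the twenty-one new values the corollary asserts --- are unreachable by your construction. The paper closes exactly this hole with one additional ingredient you never invoke: the computer-verified class $H^{27}(\GC_0)^{(10)}\cong\Q$ (equivalently $W_0H^{27}_c(\M_{10})\neq 0$), of offset $\kappa=7$. Placing this class in one slot shifts each progression by $7$ and produces the sets $\{(c-2d)+7+3m \mid 0\le m\le r-1\}$, which supply precisely the missing values. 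Since this class is one-dimensional and of odd degree, it can occupy at most one slot, which is why $k=71=27+7+37$ type combinations fail and $71$ remains excluded.

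A secondary issue: as literally described, your fixed offset-$3$ contribution is ``copies of the class in $\gr^W_0H^{15}_c(\M_6)$.'' That class is odd, so by the Koszul convention at most one copy survives in the symmetric power, and your offset sums would then be bounded by $3+3=6$ (one fixed odd class plus a variable factor of offset $3$), far short of the needed $3m$ for $m$ up to $14$. The correct statement, which the paper uses implicitly, is that $\dim\bigoplus_h W_0H^{2h+3}_c(\M_h)$ grows exponentially, so for any $m$ one can fill $m$ slots with $m$ \emph{linearly independent} odd classes of offset $3$ (e.g.\ drawn from distinct genera), whose product in $\bigwedge^m$ is nonzero; repeated copies of a single odd class will not do. You flag the bookkeeping as delicate, but the missing offset-$7$ class is not bookkeeping --- it is an essential input without which the stated range of $k$ cannot be achieved.
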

\noindent This corollary was previously known for many, but not all, values of $k$ in the stated range; see \cite[Corollary 1.2]{PayneWillwacher24}. The $21$ new values of $k$ are:
\[
20,51, 54,55,56,\dots,69,70, 72, 73.
\]
\begin{proof}[Proof of Corollary \ref{cor:exp growth}]
It is known that the dimension of $H^{2g+\kappa}(\GC_0)^{(g)}$ grows at least exponentially with $g$ for $\kappa=0$ and $\kappa=3$, by \cite{CGP21} and \cite[Proposition 2.4]{PayneWillwacher21}, respectively.
Furthermore, $H^{27}(\GC_0)^{(10)}\cong \Q$ by computer calculations.
Suppose we have an injection 
\[
H^{k-\delta}(\Sym^p(\GC_0))^{g-\delta'} \to H^k_c(\M_g),
\]
as asserted in the previous theorems for various values of $p$, $\delta$, and $\delta'$.
Then $H^{2g+k}_c(\M_g)$ grows at least exponentially with $g$ for any $k$ of the form 
\[
\delta - 2\delta' + \kappa_1 +\cdots +\kappa_p,
\]
where the $\kappa_j\in \{0,3,7\}$ are such that at least one of them is in $\{0,3\}$ and at most one of them is $7$.
The latter condition arises because the known cohomology in $H^{2g+7}(\GC_0)^{(g)}$, namely $H^{27}(\GC_0)^{(10)} \cong \Q$, is one-dimensional and concentrated in odd degree. Hence its higher symmetric powers vanish.
If $p \geq 2$, the resulting values of $k$ are thus
\[
\{ \delta -2\delta' + 3j\mid j=0,\dots,p \}
\cup 
\{ \delta -2\delta' + 3j+7 \mid j=0,\dots,p-1 \}.
\]
For example, consider the inclusion 
\[
H^{k-29}(\Sym^{14}(\GC_0))^{(g-1)}\to H_c^k(\M_g)
\]
from Corollary \ref{cor:emb1}. This provides at least exponential growth for $k$ in the set 
\[
\{27,30,33,\dots, 51, 54, 57, 60, 63, 66, 69 \} \cup \{34,37,40,\dots ,55,58,61,64,67,70,73 \}.
\]
Similarly, the other terms of Corollary \ref{cor:emb1} give at least exponential growth for $k$ in the set:
\begin{align*}
&\{26,29,\dots, 68 \} \ \cup \ \{33,36,\dots,72 \} \  
\cup \ \{24,27,\dots,63\} \ \cup \ \{31,34,\dots,67\} \\ 
\cup \ & \{20,23,\dots,50\} \ \cup \ \{27,30,\dots,54\} \
\cup \   \{16,19,\dots,37\} \ \cup \ \{23,26,\dots,41\} \\
\cup \ & \{12,15,\dots,24\} \   \cup \  \{17,20,\dots,26\} \ 
\cup \ \{ 8,11\}.
\end{align*}
This then yields the list of Corollary \ref{cor:exp growth}.
\end{proof}

\section{Euler characteristic of \texorpdfstring{$\gr_{15,0}H^*_c(\M_{g,n})$}{Gr150}}\label{sec:euler}

The weight 11 Euler characteristic computation of \cite[Section 7]{PayneWillwacher24}, carries over to the type $(15,0)$, by replacing the truncation constant 10 with 14 and hence using $C_{g,n}^{15}$ in place of the complex $C_{g,n}$ in loc. cit. We just state the final result here.

\begin{thm} \label{thm:chi-S16}
    The equivariant Euler characteristic of the type $(15,0)$ compactly supported cohomology of the moduli space of curves is computed by the following generating function:
\begin{multline*} 
\sum_{g,n\geq 0; \, 2g+n\geq 3} u^{g+n} \chi_{\bbS_n}(\gr_{15,0}H_c(\M_{g,n}))
  =
  \sum_{g,n}
  u^{g+n}
  \chi_{\bbS_n}(C^{15}_{g,n} )
  \\=
  -u\  T_{\leq 14}\bigg(
  \prod_{\ell\geq 1} 
  \frac { 
    U_\ell(\frac 1 \ell \sum_{d\mid \ell} \mu(\ell/d) (-p_d  +1-w^d ), u )
  }
  { 
    U_\ell(\frac 1 \ell \sum_{d\mid \ell} \mu(\ell/d) (-p_d), u )
  }-1\bigg)\, ,
\end{multline*}
where $T_{\leq 14}\left(\sum_{j=0}^\infty a_j w^j\right):=\sum_{j=0}^{14} a_j$,
\begin{align*}
    E_\ell&:= \frac 1 \ell \sum_{d\mid \ell}\mu(\ell/d)\frac 1 {u^d},
    &
    \lambda_\ell &:= u^\ell (1-u^\ell)\ell,
    &
    B(z) &:= \sum_{r\geq 2}\frac{B_r}{r(r-1)} \frac 1 {z^{r-1}}
\end{align*}
and $U_\ell(X,u)$ is characterized by 
\begin{align*}
    \log U_\ell(X,u)
   &=   
    \log \frac {(-\lambda_\ell)^X \Gamma(-E_\ell+X) }{\Gamma(-E_\ell)}
    \\&=
        X\left(\log(\lambda_\ell E_\ell)-1 \right)+(-E_\ell+X-\textstyle{\frac 1 2} )\log(1-\textstyle{\frac X{E_\ell}}) + B(-E_\ell+X)- B(-E_\ell).
\end{align*}
\end{thm}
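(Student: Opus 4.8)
The plan is to follow the weight $11$ Euler characteristic computation of \cite[Section~7]{PayneWillwacher24} essentially verbatim, changing only the truncation constant from $10$ to $14$ (equivalently, replacing $C_{g,n}$ by $C^{15}_{g,n}$). Below I indicate the main steps and where the real content lies.

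First I would reduce to a computation on the auxiliary complex. Since the equivariant Euler characteristic is invariant under quasi-isomorphism, the results of Section~\ref{sec:weight15} give
\[
\chi_{\bbS_n}(\gr_{15,0}H_c(\M_{g,n})) \;=\; \chi_{\bbS_n}(C^{15}_{g,n}) \;=\; (-1)^{29}\,\chi_{\bbS_n}\big(X^{\leq 14}_{g,n}\big),
\]
so it suffices to compute the equivariant Euler characteristic of the truncated complex $X^{\leq 14}_{g,n}$. Recall that a generator of $X_{g,n}$ is a connected graph with $n$ numbered external legs, all ordinary vertices of valence $\geq 3$, one distinguished genus-$1$ vertex carrying an arbitrary set of marked incident half-edges, and an orientation given by an ordering of the edges; the truncation keeps those generators with at most $14$ marked half-edges. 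I would introduce a formal variable $w$ recording the number of marked half-edges, so that the untruncated complex $X_{g,n}$ contributes a power series in $w$ and the operator $T_{\leq 14}$ extracts the sum of the coefficients of $w^0,\dots,w^{14}$.

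Next I would assemble the generating function for the Euler characteristic of the \emph{untruncated} complex, which is the combinatorially transparent object. Summing over all connected graphs with vertices of valence $\geq 3$, carrying the $\bbS_n$-action on legs, is handled by the exponential formula together with the plethystic/cycle-index formalism, exactly as in \cite[Section~7]{PayneWillwacher24}. The sum over the number of vertices of a fixed frequency $\ell$ produces the function $U_\ell(X,u)$, whose logarithm is the asymptotic expansion of $\log\Gamma$ in the variable $-E_\ell + X$ — this is where $\lambda_\ell$, $E_\ell$, and the Bernoulli series $B(z)$ enter. The presence of the distinguished vertex, which may absorb an unmarked half-edge, a marked half-edge (tracked by $w$), or be split off, corresponds precisely to replacing the argument $-p_d$ by $-p_d + 1 - w^d$ in the numerator, while the denominator $U_\ell\big(\tfrac1\ell\sum_{d\mid\ell}\mu(\ell/d)(-p_d),u\big)$ divides out the contribution of graphs without the distinguished vertex. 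The outer factor $-u$ accounts for the genus of the distinguished vertex together with the sign $(-1)^{29}$ of the degree shift $[-29]$, and the $-1$ subtracts the trivial term, enforcing connectedness and the condition $2g+n\geq 3$. Applying $T_{\leq 14}$ to the result yields the formula in the statement.

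The main obstacle is bookkeeping rather than conceptual novelty: one must check that the sign conventions — the orientation by ordering of edges, the Koszul signs of the $\mathbb{Q}[-1]$ edge factors in \eqref{equ:GKdef}, and the odd degree shift $[-29]$ — and the plethystic substitutions are transported correctly from \cite[Section~7]{PayneWillwacher24}, and that the interaction of $T_{\leq 14}$ with the infinite product is the intended one. Since $\GC_0$ and the unmarked part of $X_{g,n}$ are literally identical to those in loc.\ cit., and the only structural change is the replacement of $X^{\leq 10}_{g,n}[-21]$ by $X^{\leq 14}_{g,n}[-29]$, no new analytic input is required, and the derivation of loc.\ cit.\ goes through with $T_{\leq 10}$ replaced by $T_{\leq 14}$, producing the claimed generating function.
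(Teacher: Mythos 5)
Your proposal matches the paper's own treatment: the paper simply states that the weight~11 Euler characteristic computation of \cite[Section~7]{PayneWillwacher24} carries over verbatim upon replacing the truncation constant $10$ by $14$ (i.e., using $C^{15}_{g,n}$ in place of $C_{g,n}$), which is exactly the reduction you describe. Your additional remarks on the sign from the odd shift $[-29]$ and the role of $T_{\leq 14}$ are consistent with that derivation, so the proposal is correct and takes essentially the same approach.
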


We provide a computer generated table for small values of $(g,n)$ in Figure~\ref{fig:ec}.

\begin{rem}
In \cite{CLPW}, we studied the asymptotic behavior of the weight 11 Euler characteristic $\chi(\gr_{11}H_c(\M_{g}))$ as $g\to \infty$.
Given that the formula for the type $(15,0)$ Euler characteristic above is almost identical to the type $(11,0)$ case (modulo the replacement of the constant 10 by 14 in the truncation operator $T_{\leq k}$), the analysis of loc. cit. also applies to the case above.
We hence find the asymptotic formula as $g\to \infty$:
\[
\chi(\gr_{15,0}H_c(\M_{g}))
\sim 
\begin{cases}
 D_\infty^{ev} \frac{(-1)^{g/2} (g-2)! }{(2\pi)^g } & \text{for $g$ even} \\
 D_\infty^{odd} \frac{(-1)^{(g-1)/2} (g-2)! }{(2\pi)^{g} }
     & \text{for $g$ odd} 
\end{cases}
\]
with the constants
\begin{align*}
    D_\infty^{ev}
    &=
-
\sum_{j=8}^\infty \frac{(-4\pi^2)^{j} }{j(2j-15)! 14!}
   \approx 
   0.498203, \mbox{ and}
\\
D_{\infty}^{odd}
    &=
    -
\sum_{j=7}^\infty \frac{4\pi (-4\pi^2)^{j} }{(2j+1)(2j-14)! 14!}
    \approx 
    1.24975.
\end{align*}

\end{rem}

\begin{figure}
    \centering
 \begin{adjustbox}{angle=90, scale=.7}
\begin{tabular}{|g|m{1cm}|m{1cm}|m{3cm}|m{4cm}|m{4cm}|m{5cm}|m{6cm}|} 
\hline \rowcolor{Gray} $g$,$n$ & 0 & 1 & 2 & 3 & 4 & 5 & 6\\  
\hline
 7 & $ 0 $ & $ 0 $ & $ 0 $ & $ 0 $ & $ 0 $ & $ 0 $ & $ s_{1,1,1,1,1,1} $ \\  
\hline
 8 & $ 0 $ & $ 0 $ & $ 0 $ & $ 0 $ & $ 0 $ & $ s_{2,1,1,1} $ & $ s_{2,1,1,1,1} - s_{3,1,1,1} - s_{3,2,1} - 2s_{4,1,1} $ \\  
\hline
 9 & $ 0 $ & $ 0 $ & $ 0 $ & $ -s_{1,1,1} $ & $ -s_{1,1,1,1} + 2s_{3,1} $ & $ -2s_{2,2,1} + 2s_{3,1,1} - 2s_{3,2} - s_{4,1} - 3s_{5} $ & $ s_{1,1,1,1,1,1} + 7s_{2,1,1,1,1} + s_{2,2,1,1} + 4s_{2,2,2} + 6s_{3,1,1,1} + 3s_{3,2,1} - 2s_{3,3} + s_{4,1,1} + 3s_{4,2} - 3s_{5,1} $ \\  
\hline
 10 & $ 0 $ & $ 0 $ & $ -s_{2} $ & $ -s_{2,1} $ & $ -4s_{1,1,1,1} - 2s_{2,1,1} + s_{3,1} + 3s_{4} $ & $ -10s_{1,1,1,1,1} - s_{2,1,1,1} - s_{2,2,1} + 14s_{3,1,1} + s_{3,2} + 4s_{4,1} $ & $ -17s_{1,1,1,1,1,1} - 2s_{2,1,1,1,1} - 11s_{2,2,1,1} - 8s_{2,2,2} + 33s_{3,1,1,1} - 10s_{3,2,1} - 19s_{3,3} + 3s_{4,1,1} - 20s_{4,2} - 24s_{5,1} - 8s_{6} $ \\  
\hline
 11 & $ s_{} $ & $ s_{1} $ & $ 0 $ & $ -4s_{2,1} - 2s_{3} $ & $ -2s_{1,1,1,1} - 14s_{2,1,1} + 3s_{2,2} + 8s_{4} $ & $ -11s_{1,1,1,1,1} - 31s_{2,1,1,1} - 9s_{2,2,1} + 4s_{3,1,1} + 19s_{3,2} + 35s_{4,1} + 12s_{5} $ & $ -44s_{1,1,1,1,1,1} - 58s_{2,1,1,1,1} - 20s_{2,2,1,1} - 58s_{2,2,2} + 69s_{3,1,1,1} + 21s_{3,2,1} - 11s_{3,3} + 111s_{4,1,1} - 44s_{4,2} - 19s_{5,1} - 51s_{6} $ \\  
\hline
 12 & $ 0 $ & $ 2s_{1} $ & $ 6s_{1,1} + 2s_{2} $ & $ 12s_{1,1,1} - 2s_{2,1} - 16s_{3} $ & $ 16s_{1,1,1,1} - 10s_{2,1,1} + 8s_{2,2} - 40s_{3,1} - 10s_{4} $ & $ 7s_{1,1,1,1,1} - 89s_{2,1,1,1} + 5s_{2,2,1} - 127s_{3,1,1} + 67s_{3,2} + 48s_{4,1} + 81s_{5} $ & $ -36s_{1,1,1,1,1,1} - 236s_{2,1,1,1,1} + 5s_{2,2,1,1} - 123s_{2,2,2} - 127s_{3,1,1,1} + 130s_{3,2,1} + 203s_{3,3} + 266s_{4,1,1} + 119s_{4,2} + 223s_{5,1} + 25s_{6} $ \\  
\hline
 13 & $ -2s_{} $ & $ -2s_{1} $ & $ 2s_{1,1} + 12s_{2} $ & $ 26s_{1,1,1} + 20s_{2,1} $ & $ 90s_{1,1,1,1} + 56s_{2,1,1} - 41s_{2,2} - 117s_{3,1} - 95s_{4} $ & $ 213s_{1,1,1,1,1} + 28s_{2,1,1,1} + 95s_{2,2,1} - 392s_{3,1,1} + 6s_{3,2} - 123s_{4,1} + 91s_{5} $ & $ 324s_{1,1,1,1,1,1} - 194s_{2,1,1,1,1} - 105s_{2,2,1,1} + 156s_{2,2,2} - 1451s_{3,1,1,1} - 129s_{3,2,1} + 580s_{3,3} - 497s_{4,1,1} + 756s_{4,2} + 924s_{5,1} + 495s_{6} $ \\  
\hline
 14 & $ -s_{} $ & $ -6s_{1} $ & $ -7s_{1,1} + 20s_{2} $ & $ -s_{1,1,1} + 104s_{2,1} + 84s_{3} $ & $ 92s_{1,1,1,1} + 334s_{2,1,1} - 87s_{2,2} + 13s_{3,1} - 226s_{4} $ & $ 381s_{1,1,1,1,1} + 816s_{2,1,1,1} + 170s_{2,2,1} - 131s_{3,1,1} - 742s_{3,2} - 1135s_{4,1} - 558s_{5} $ & $ 1174s_{1,1,1,1,1,1} + 1380s_{2,1,1,1,1} + 235s_{2,2,1,1} + 1814s_{2,2,2} - 2494s_{3,1,1,1} - 868s_{3,2,1} + 232s_{3,3} - 3535s_{4,1,1} + 1360s_{4,2} + 575s_{5,1} + 1393s_{6} $ \\  
\hline
 15 & $ -2s_{} $ & $ -29s_{1} $ & $ -78s_{1,1} - 41s_{2} $ & $ -196s_{1,1,1} + 61s_{2,1} + 290s_{3} $ & $ -234s_{1,1,1,1} + 410s_{2,1,1} + 55s_{2,2} + 1001s_{3,1} + 375s_{4} $ & $ -111s_{1,1,1,1,1} + 2215s_{2,1,1,1} - 233s_{2,2,1} + 2540s_{3,1,1} - 2169s_{3,2} - 1822s_{4,1} - 2071s_{5} $ & $ 1154s_{1,1,1,1,1,1} + 6337s_{2,1,1,1,1} + 2410s_{2,2,1,1} + 3359s_{2,2,2} + 4687s_{3,1,1,1} - 1410s_{3,2,1} - 4684s_{3,3} - 5607s_{4,1,1} - 3851s_{4,2} - 6258s_{5,1} - 1153s_{6} $ \\  
\hline
 16 & $ 14s_{} $ & $ 17s_{1} $ & $ -76s_{1,1} - 200s_{2} $ & $ -489s_{1,1,1} - 354s_{2,1} + 23s_{3} $ & $ -1553s_{1,1,1,1} - 1156s_{2,1,1} + 959s_{2,2} + 2477s_{3,1} + 2068s_{4} $ & $ -3169s_{1,1,1,1,1} + 435s_{2,1,1,1} - 369s_{2,2,1} + 8767s_{3,1,1} + 1650s_{3,2} + 4080s_{4,1} - 1118s_{5} $ & $ -5124s_{1,1,1,1,1,1} + 5055s_{2,1,1,1,1} + 3006s_{2,2,1,1} - 5495s_{2,2,2} + 28447s_{3,1,1,1} - 3461s_{3,2,1} - 15299s_{3,3} + 7079s_{4,1,1} - 25284s_{4,2} - 25064s_{5,1} - 13800s_{6} $ \\  
\hline
 17 & $ 2s_{} $ & $ 75s_{1} $ & $ 90s_{1,1} - 329s_{2} $ & $ -22s_{1,1,1} - 1782s_{2,1} - 1555s_{3} $ & $ -2000s_{1,1,1,1} - 5643s_{2,1,1} + 964s_{2,2} - 420s_{3,1} + 3326s_{4} $ & $ -6876s_{1,1,1,1,1} - 13779s_{2,1,1,1} - 1133s_{2,2,1} + 4966s_{3,1,1} + 17718s_{3,2} + 24375s_{4,1} + 12433s_{5} $ & $ -20444s_{1,1,1,1,1,1} - 24332s_{2,1,1,1,1} - 5940s_{2,2,1,1} - 33861s_{2,2,2} + 46884s_{3,1,1,1} + 12667s_{3,2,1} - 6374s_{3,3} + 63692s_{4,1,1} - 26319s_{4,2} - 12311s_{5,1} - 25091s_{6} $ \\  
\hline
 18 & $ 17s_{} $ & $ 303s_{1} $ & $ 900s_{1,1} + 642s_{2} $ & $ 2237s_{1,1,1} - 1512s_{2,1} - 3984s_{3} $ & $ 2669s_{1,1,1,1} - 6688s_{2,1,1} - 2284s_{2,2} - 15341s_{3,1} - 6084s_{4} $ & $ -2793s_{1,1,1,1,1} - 44027s_{2,1,1,1} - 6683s_{2,2,1} - 45860s_{3,1,1} + 28559s_{3,2} + 28180s_{4,1} + 32906s_{5} $ & $ -21411s_{1,1,1,1,1,1} - 89263s_{2,1,1,1,1} - 15165s_{2,2,1,1} - 27777s_{2,2,2} - 28229s_{3,1,1,1} + 116405s_{3,2,1} + 108382s_{3,3} + 159111s_{4,1,1} + 146742s_{4,2} + 155495s_{5,1} + 35908s_{6} $ \\  
\hline
\end{tabular}
 \end{adjustbox}
    \caption{The $\bbS_n$-equivariant Euler characteristic of 
$\gr_{15,0}^WH_c^* (\M_{g,n})$.}
    \label{fig:ec}
\end{figure}

\bibliographystyle{amsplain}
\bibliography{refs}
\end{document}